\documentclass[nointlimits,11pt,oneside]{amsart}
\usepackage[utf8]{inputenc}
\usepackage{amsmath,amsfonts,amsthm,amssymb,cases,enumerate}
\usepackage{xcolor}
\usepackage[colorlinks, linkcolor=teal, citecolor=magenta]{hyperref}
\usepackage{mathscinet}
\usepackage[square,numbers]{natbib}
\usepackage{microtype}
\usepackage[mathcal]{euscript}
\usepackage[%
	a4paper,
	left=1.5cm,
	right=1.5cm,
	top=3cm,
	bottom=3cm] {geometry}

\theoremstyle{plain}
\newtheorem{theorem}{Theorem}[section]
\newtheorem{lemma}[theorem]{Lemma}
\theoremstyle{definition}
\newtheorem{remark}[theorem]{Remark}
\newtheorem{definition}[theorem]{Definition}
\numberwithin{equation}{section}

\hyphenation{re-ar-ran-ge-ment-in-va-ri-ant}
\hyphenation{re-ar-ran-ge-ment}

\newcommand{\reftext}[1]{#1}
\newcommand{\Cases}[1]{\begin{cases}#1\end{cases}}
\newcommand{\atopfrac}[2]{\substack{#1 \\ #2}}

\urlstyle{sf}

\makeatletter
\def\paragraph{\bigskip\@startsection{paragraph}{4}%
  \z@\z@{-\fontdimen2\font}%
  {\normalfont\bfseries}}
\makeatother

\begin{document}

\title{Boundedness of classical operators on rearrangement-invariant spaces}

\author{David E. Edmunds, Zden\v ek Mihula, V\'{\i}t Musil and Lubo\v s Pick}

\address{David E. Edmunds, Department of Mathematics, University of Sussex, Falmer, Brighton, BN1 9QH, UK}
\email{davideedmunds@aol.com}
\urladdr{0000-0003-2394-9385}

\address{Zden\v ek Mihula, Department of Mathematical Analysis, Faculty of Mathematics and
Physics, Charles University, Sokolovsk\'a~83,
186~75 Praha~8, Czech Republic}
\email{mihulaz@karlin.mff.cuni.cz}
\urladdr{0000-0001-6962-7635}

\address{V\'{\i}t Musil, Department of Mathematical Analysis, Faculty of Mathematics and
Physics, Charles University, Sokolovsk\'a~83,
186~75 Praha~8, Czech Republic}
\email{musil@karlin.mff.cuni.cz}
\urladdr{0000-0001-6083-227X}

\address{Lubo\v s Pick, Department of Mathematical Analysis, Faculty of Mathematics and
Physics, Charles University, Sokolovsk\'a~83,
186~75 Praha~8, Czech Republic}
\email{pick@karlin.mff.cuni.cz}
\urladdr{0000-0002-3584-1454}

\subjclass[2000]{46E30, 26D20, 47B38, 46B70}
\keywords{integral operators; rearrangement-invariant spaces; optimality}

\date{31/10/2019}

\begin{abstract}
We study the behaviour on rearrangement-invariant (r.i.) spaces of such
classical operators of interest in harmonic analysis as the
Hardy-Littlewood maximal operator (including the fractional version),
the Hilbert and Stieltjes transforms, and the Riesz potential. The focus
is on sharpness questions, and we present characterisations of the
optimal domain (or range) partner spaces when the range (domain) is
fixed. When an~r.i.~partner space exists at all, a
complete characterisation of the situation is given. We illustrate the
results with a variety of examples of sharp particular results involving
customary function spaces.
\end{abstract}

\bibliographystyle{abbrvnat}

\maketitle

\section*{How to cite this paper}
\noindent
This paper has been accepted for publication in \emph{Journal of Functional
Analysis} and is available on
\begin{center}
	\url{https://doi.org/10.1016/j.jfa.2019.108341}.
\end{center}
Should you wish to cite this paper, the authors would like to cordially ask you
to cite it appropriately.

\section{Introduction} \label{sec1}

\noindent
Given function spaces $X,Y$ and an operator $T$ that maps $X$ boundedly
into $Y$, it is natural to ask whether there is a space bigger than
$X$ that is also mapped boundedly by $T$ into $Y$, or a space smaller
than $Y$ into which $T$ maps $X$ boundedly.

Such questions have been attracting a great deal of attention for many
years, in particular in connection with embeddings of Sobolev spaces,
see, for
example~\cite{BMR:03,CarSo:97,ClaSo:16a,ClaSo:16,CwPu:98,FiRa:06,Ker:79,MaMi:06,MMP:06,MaMi:10,Tal:94,Tal:16,Tar:98}.
By way of illustration we consider a particularly simple Sobolev
embedding. Let $\Omega $ be a bounded open subset of $\mathbb{R}^{n}$,
let $p\in [1,n)$ and put $p^{\ast }=np/(n-p)$. It is classical that, in
standard notation, the Sobolev space $W_{0}^{1,p}(\Omega )$ is embedded
in $L^{p^{\ast }}(\Omega )$. Can $W_{0}^{1,p}(\Omega )$ be embedded in
a space smaller than $L^{p^{\ast }}(\Omega )$? Is there a space larger
than $W_{0}^{1,p}(\Omega )$ that can be embedded in $L^{p^{\ast }}(
\Omega )?$ To make such questions sensible the class of competing spaces
must be specified. If we restrict ourselves to Lebesgue spaces as
targets and domain spaces that are Sobolev spaces based on Lebesgue
spaces, then the embedding $W_{0}^{1,p}(\Omega )\hookrightarrow $
$L^{p^{\ast }}(\Omega )$ is optimal in the sense that neither the domain
nor the target space can be improved. This leaves open the question of
optimality in classes of spaces wider than those involving the Lebesgue
scale. If the class of admissible target spaces is taken to be that of
rearrangement-invariant (r.i.) spaces, then the optimal range space
turns out to be the Lorentz space $L^{p^{\ast },p}(\Omega )$\textup{;} there is
a similar improvement of the domain space, involving a Sobolev space
based on a Lorentz rather than a Lebesgue space.

The first results in this direction were obtained in~\cite{EKP}
in connection with re\-ar\-range\-ment-invariant quasinorms. Further
extensions concerning r.i.~norms were added later in
several papers, for instance~\cite{T2,T3}. A comprehensive
treatment of optimal Sobolev embeddings on Euclidean domains equipped
with general measures having specific isoperimetric properties was given
in \citep{CPS}.

Embeddings are not the only maps for which such questions are of
interest and importance. The optimality of r.i.~spaces on
which the Laplace transform $\mathcal{L}$ acts boundedly was studied in
a recent paper~\citep{BEP}. A special case of the results obtained is
that if $p\in (1,\infty )$ and $q\in \lbrack 1,\infty ]$, then
$\mathcal{L}$ maps the Lorentz space $L^{p,q}(0,\infty )$ boundedly into
$L^{p^{\prime },q}(0,\infty )$, a fact which we denote by $
\mathcal{L}\colon L^{p,q}(0,\infty )\rightarrow L^{p^{\prime },q}(0,
\infty )$. Moreover, both the domain and target spaces are optimal:
there is no r.i. space smaller than $L^{p^{\prime },q}(0,\infty )$ into
which $\mathcal{L}$ maps $L^{p,q}(0,\infty )$, and there is no
r.i. space larger than $L^{p,q}(0,\infty )$ mapped by $\mathcal{L}$
into $L^{p^{\prime },q}(0,\infty )$. Thus in particular $\mathcal{L}
\colon L^{p}(0,\infty )\rightarrow L^{p^{\prime },p}(0,\infty )$ and the
spaces involved form an optimal pair; if $p>2$, there is no $q$ for
which $\mathcal{L}\colon L^{p}(0,\infty )\rightarrow L^{q}(0,\infty )$.

In the present paper we discuss such problems for classical operators
of great interest in analysis and its applications, namely the Hilbert
and Stieltjes transforms, the Riesz potential and various versions of
the maximal operator. The action of these operators on specific classes
of function spaces has been extensively studied over several
decades. Classical results are available for example in connection with
familiar function spaces. The 1970s experienced a real boom of this
theory involving weighted Lebesgue spaces and fundamental papers were
written (\cite{Muc:72,Sa:82} for the Hardy--Littlewood maximal
operator,~\cite{MW:71} for singular and fractional
integrals,~\cite{CF:74,HMW,MW:76} for the Hilbert transform).
Later it became apparent that Lebesgue spaces are not sufficient for
describing all the important situations and other function spaces were
investigated. Classical Lorentz spaces which originated in the 1950s and
have been occurring occasionally later (see~\cite{Bag:83,Boy:67})
became extremely fashionable in the 1990s when the fundamental
papers~\cite{AM:90,Sa:90} appeared. Various important and deep
results were obtained, see for
example~\cite{ACS:12,CO:15,CS:93,CS:97,CE:97}. Orlicz spaces which
generalize Lebesgue's scale in a direction essentially different from
Lorentz spaces, received much attention too, see for
instance~\cite{BK:84,BP:87,Cia:97,Cia:99,CM:19,Gal:88,Mus:19}. The
results naturally found their way into important monographs that are
considered classic these days,
see~\cite{CMP:11,DHHR:11,GR:85,Mabook,Ruz:00,Ste:70,Ste:93,SW:71}.
Let us point out that, in particular, in the
monograph~\cite{Mabook}, among plenty of other fundamental results,
the significance of the connection between embeddings and integral
operators is explained in great detail.

On the other hand, surprisingly little attention has been paid to the
\textit{sharpness} of the results, perhaps with an exception of results
in different direction on optimality obtained
e.g.~in~\cite{DS:07,ST:16} and the references therein, where
operators related to the Hardy averaging operator are studied, see
also~\cite{CR:02}. Optimal range spaces for Calder\'on operators are studied in the recent paper~\cite{STZ:19}.

In this paper we study the behaviour of classical operators on
r.i.~spaces, a class of function spaces that includes
for example all Lebesgue, Lorentz, Orlicz, Lorentz-Zygmund spaces and
more. Our focus is mainly on the optimality of function spaces.

We use the Hardy-Littlewood maximal operator $M$ to illustrate the
results obtained and serve as an appetiser for the forthcoming
attractions. Let $X$ be an~r.i.~space
over $\mathbb{R}^{n}$ with associate space $X^{\prime }$\textup{;} denote by
$X^{\prime }(0,\infty )$ the representation space of $X^{\prime }$ and
suppose that the function $\psi $ given by $\psi (t)=\chi _{(0,1)}(t)
\log (1/t)$ belongs to $X^{\prime }(0,\infty )$. Let $Y^{\prime }$ be
the set of all $f$ such that
\begin{eqnarray*}
\varrho (f)=\left \Vert \int \nolimits _{t}^{\infty }f^{\ast }(s)s^{-1}
\,\mathrm{d}s\right \Vert _{X^{\prime }(0,\infty )}<\infty .
\end{eqnarray*}
Endowed with the norm $\varrho $, $Y^{\prime }$ is an r.i. space with
associate space $Y$ that not only has the property that $M\colon X
\rightarrow Y$, but is also the optimal range space corresponding to
$X$. If $\psi \notin X^{\prime }(0,\infty )$, there is no r.i. space
$Z$ over $\mathbb{R}^{n}$ such that $M\colon X\rightarrow Z$.

The situation turns out to be considerably more complicated in the case
of the fractional maximal operator, another classical operator of
harmonic analysis. The reason is that the appropriate analogue of the
Riesz--Wiener--Herz inequality for the fractional maximal operator leads
to an~inevitable involvement of a~supremum type operator, rather than
just an~integral mean. Supremum operators are not linear and in general
are less manageable than their integral companions. However, using a
fine analysis combining known and new techniques and various delicate
estimates we are able to characterize the optimal range space for this
operator as well. Since the general resulting condition is however
naturally not so simple as in the case of the operator $M$, we include
another, simpler characterization, available under a rather mild extra
assumption. We also include an~interesting and perhaps somewhat
surprising result describing a~vital link between optimality properties
of a space and boundedness of a supremum operator on its associate space
that leads to a~self-explanatory characterization of the above-mentioned
extra condition. This part of the paper is one of the most innovative
ones.

We finally consider two other classical operators of harmonic analysis,
namely the Hilbert transform and the Riesz potential. The importance of
these operators is very well known, and their properties have been
deeply studied. Our contribution is the characterization of the
optimality of the spaces involved. In case of the Hilbert transform we
use the Stieltjes transform as the appropriate tool and obtain
characterizations for it as well.

For each of the operators considered, we are also able to nail down the
optimal domain partner when the range space is fixed, this task being
in general slightly simpler than the converse one. To establish all
this, a~combination of new techniques developed here with those
from~\citep{CPS,EKP} and~\citep{T2} is used.

We illustrate the results obtained with variety of nontrivial examples.
For instance, we recover the well-known fact that
\begin{eqnarray*}
M\colon L(\log L)^{\alpha }(Q)\to L(\log L)^{\alpha -1}(Q)
\end{eqnarray*}
when $\alpha \geq 1$, $Q\subset \mathbb{R}^{n}$ is a cube of finite
measure and $L(\log L)^{\alpha }(Q)$ is the classical Zygmund class
defined as the collection of all measurable functions $g$ on $Q$
satisfying $\int _{Q}|g(x)|(\log (1+|g(x)|)^{\alpha }\mathrm{d}x<\infty $, but we add the information that the range space cannot be
improved in any way when the competing spaces are rearrangement
invariant. Similar examples are even more interesting when the functions
act on a~set of unbounded measure, say, $\mathbb{R}^{n}$. We will for
example prove that if $X$ is the space equipped with the norm
$\|f\|_{X}=\int _{0}^{\infty }f^{*}(t)w(t)\,dt$, where
\begin{eqnarray*}
w(t)=(1-\log t)^{\alpha _{0}}\chi _{(0,1)}+(1+\log t)^{\alpha _{\infty }}
\chi _{[1,\infty )}
\end{eqnarray*}
and $\alpha _{0}\geq 1$ and $\alpha _{\infty }\in [-1,0]$, then the
optimal (smallest possible) r.i.~range space $Y$ such
that
\begin{eqnarray*}
M\colon X(\mathbb{R}^{n})\to Y(\mathbb{R}^{n})
\end{eqnarray*}
is the space whose associate space has norm
\begin{eqnarray*}
\|f\|=\sup _{0<t<\infty }w(t)\sp{-1}\int _{t}^{\infty }f^{*}(s)\,\frac{
\mathrm{d}s}{s},\quad f\in \mathcal{M}_{+}(\mathbb{R}^{n}).
\end{eqnarray*}
Such results have not been available before, and the latter norm cannot
be identified with any customary known one.

We get analogous sets of examples for other operators, too. For example
in the case of the fractional maximal operator we essentially improve
some results from earlier papers such
as~\cite{EO,EOP,EOP-broken,OP}.

\section{Preliminaries} \label{sec2}

\noindent
In this section we collect all the background material that will be used
in the paper. We start with the operation of the nonincreasing
rearrangement of a measurable function.

Throughout this section, let $(R,\mu )$ be a $\sigma $-finite nonatomic
measure space. We set
\begin{eqnarray*}
\mathcal{M}(R,\mu )= \{f: f \mbox{ is a } \mu {-}\mbox{measurable function on }R \mbox{ with values in }[-\infty ,\infty ]\},
\end{eqnarray*}
\begin{eqnarray*}
\mathcal{M}_{0}(R,\mu )= \{f \in \mathcal{M}(R,\mu ): f  \mbox{ is
finite } \mu \mbox{-a.e. on } R\}
\end{eqnarray*}
and
\begin{eqnarray*}
\mathcal{M}_{+}(R,\mu )= \{f \in \mathcal{M}(R,\mu ): f \geq 0\}.
\end{eqnarray*}
The \textit{nonincreasing rearrangement} $f^{*} \colon [0,\infty )
\to [0, \infty ]$ of a function $f\in \mathcal{M}(R,\mu )$ is defined
as
\begin{eqnarray*}
f^{*}(t)=\inf \{\lambda \in (0,\infty ): \mu(\{s\in R: |f(s)|>
\lambda \})\leq t\},\ t\in [0,\infty ).
\end{eqnarray*}
The \textit{maximal nonincreasing rearrangement} $f^{**} \colon (0,
\infty ) \to [0, \infty ]$ of a function $f\in \mathcal{M}(R,\mu )$ is
defined as
\begin{eqnarray*}
f^{**}(t)=\frac{1}{t}\int _{0}^{t} f^{*}(s)\,\mathrm{d}s,\quad t\in (0,\infty ).
\end{eqnarray*}
If $|f|\leq |g|$ $\mu $-a.e. in $R$, then $f^{*}\leq g^{*}$. The
operation $f\mapsto f^{*}$ does not preserve sums or products of
functions, and is known not to be subadditive. The lack of subadditivity
of the operation of taking the nonincreasing rearrangement is, up to
some extent, compensated by the following fact
\citep[Chapter~2,~(3.10)]{BS}: for every $t\in (0,\infty )$ and every
$f,g\in \mathcal{M}(R,\mu )$, we have
%
\begin{eqnarray}
\label{E:subadditivity-of-doublestar}
\int _{0}^{t}(f +g)^{*}(s)\,\mathrm{d}s
\leq \int _{0}^{t}f^{*}(s)\,\mathrm{d}s + \int _{0}^{t}g
^{*}(s)\,\mathrm{d}s.
\end{eqnarray}
This inequality can be also written in the form
%
\begin{eqnarray}
\label{E:subadditivity-of-doublestar-a}
(f+g)^{**}\leq f^{**}+g^{**}.
\end{eqnarray}
A fundamental result in the theory of Banach function spaces is the
\textit{Hardy lemma} \citep[Chapter~2, Proposition~3.6]{BS} which
states that if two nonnegative measurable functions $f,g$ on
$(0,\infty )$ satisfy
\begin{eqnarray*}
\int _{0}^{t}f(s)\,\mathrm{d}s\leq \int _{0}^{t}g(s)\,\mathrm{d}s
\end{eqnarray*}
for all $t\in (0,\infty )$, then, for every nonnegative nonincreasing
function $h$ on $(0,\infty )$, one has
\begin{eqnarray*}
\int _{0}^{\infty }f(s)h(s)\,\mathrm{d}s\leq \int _{0}^{\infty }g(s)h(s)\,\mathrm{d}s.
\end{eqnarray*}

Another important property of rearrangements is the
\textit{Hardy-Littlewood inequality}
\citep[Chapter~2, Theorem~2.2]{BS}, which asserts that, if
$f, g \in \mathcal{M}(R,\mu )$, then
%
\begin{eqnarray}
\label{E:HL}
\int _{R} |fg| \,\mathrm{d}\mu \leq \int _{0}^{\infty } f^{*}(t) g^{*}(t)\,\mathrm{d}t.
\end{eqnarray}

If $(R,\mu )$ and $(S,\nu )$ are two (possibly different)
$\sigma $-finite measure spaces, we say that functions $f\in
\mathcal{M}(R,\mu )$ and $g\in \mathcal{M}(S,\nu )$ are
\textit{equimeasurable}, and write $f\sim g$, if $f^{*}=g^{*}$ on
$(0,\infty )$.

A functional $\varrho \colon \mathcal{M}_{+} (R,\mu ) \to [0,\infty ]$
is called a \textit{Banach function norm} if, for all $f$, $g$ and
$\{f_{j}\}_{j\in \mathbb{N}}$ in $\mathcal{M}_{+}(R,\mu )$, and every
$\lambda \geq 0$, the following properties hold:
\begin{enumerate}[(P1)]%
\item[(P1)]
$\varrho (f)=0$ if and only if $f=0$; $\varrho (\lambda f)= \lambda
\varrho (f)$; $\varrho (f+g)\leq \varrho (f)+ \varrho (g)$ (the
\textit{norm axiom});
\item[(P2)]
$ f \le g$ a.e. implies $\varrho (f)\le \varrho (g)$ (the
\textit{lattice axiom});
\item[(P3)]
$ f_{j} \nearrow f$ a.e. implies $\varrho (f_{j}) \nearrow \varrho (f)$
(the \textit{Fatou axiom});
\item[(P4)]
$\varrho (\chi _{E})<\infty $ for every $E\subset R$ of finite measure
(the \textit{nontriviality axiom});
\item[(P5)]
if $E$ is a subset of $R$ of finite measure, then $\int _{E} f\,{\mathrm{d}}\mu \le C_{E} \varrho (f)$ for some positive constant $C_{E}$, depending on $E$ and $\varrho $ but independent of $f$ (the
\textit{local embedding in $L^{1}$}).
\end{enumerate}
If, in addition, $\varrho $ satisfies
\begin{itemize}%
\item[(P6)] $\varrho (f) = \varrho (g)$ whenever $f^{*} = g^{*}$(the
\textit{rearrangement-invariance axiom}),
\end{itemize}
then we say that $\varrho $ is an
\textit{r.i.~norm}.

If $\varrho $ is an~r.i.~norm, then the collection
\begin{eqnarray*}
X=X({\varrho })=\{f\in \mathcal{M}(R,\mu ): \varrho (|f|)<\infty
\}
\end{eqnarray*}
is called a~\textit{rearrangement-invariant~space} (\textit{r.i.~space} for short), corresponding to the norm
$\varrho $. We shall write $\|f\|_{X}$ instead of $\varrho (|f|)$. Note
that the quantity $\|f\|_{X}$ is defined for every $f\in \mathcal{M}(R,
\mu )$, and
\begin{eqnarray*}
f\in X\quad \Leftrightarrow \quad \|f\|_{X}<\infty .
\end{eqnarray*}

With any r.i.~norm $\varrho $ is associated
another functional, $\varrho '$, defined for $g \in \mathcal{M}_{+}(R,
\mu )$ as
\begin{eqnarray*}
\varrho '(g)=\sup \left \{  \int _{R} fg\,{\mathrm{d}}\mu : f\in \mathcal{M}_{+}(R,\mu ),\ \varrho (f)\leq 1\right \}
.
\end{eqnarray*}
It turns out that $\varrho '$ is also an~r.i.~norm,
which is called the~\textit{associate norm} of $\varrho $. Moreover, for
every r.i.~norm $\varrho $ and every $f\in
\mathcal{M}_{+}(R,\mu )$, we have
(see~\citep[Chapter~1, Theorem~2.9]{BS})
\begin{eqnarray*}
\varrho (f)=\sup \left \{  \int _{R}fg\,{\mathrm{d}}\mu : g\in \mathcal{M}_{+}(R,\mu ),\ \varrho '(g)\leq 1\right \}
.
\end{eqnarray*}
If $\varrho $ is an~r.i.~norm, $X=X({\varrho })$ is
the r.i.~space determined by $\varrho $, and
$\varrho '$ is the associate norm of $\varrho $, then the function space
$X({\varrho '})$ determined by $\varrho '$ is called the
\textit{associate space} of $X$ and is denoted by $X'$. We always have
$(X')'=X$, and we shall write $X''$ instead of $(X')'$. Furthermore, the
\textit{H\"{o}lder inequality}
\begin{eqnarray*}
\int _{R}fg\,{\mathrm{d}}\mu \leq \|f\|_{X}\|g\|_{X'}
\end{eqnarray*}
holds for every $f,g\in \mathcal{M}(R,\mu )$.

An important consequence of the Hardy lemma, which plays a crucial role
in the theory of rearran\-gement-invariant spaces, is the
\textit{Hardy--Littlewood--P\'{o}lya principle}
\citep[Chapter~2, Theorem~4.6]{BS} which asserts that if two functions
$f,g$ satisfy the so-called
\textit{Hardy--Littlewood--P\'{o}lya relation}, defined by
\begin{eqnarray*}
\int _{0}^{t}f^{*}(s){\mathrm{d}}s\leq \int _{0}^{t}g^{*}(s){\mathrm{d}}s, \quad t\in (0,\infty ),
\end{eqnarray*}
and sometimes denoted by $f\prec g$ in the literature, then
$\|f\|_{X}\leq \|g\|_{X}$ provided that the underlying measure space is
resonant. We note that throughout this paper we work solely on nonatomic
measure spaces, which are resonant by~\citep[Chapter~2, Theorem~2.7]{BS}.

For every r.i.~space $X$ over the measure space
$(R,\mu )$ there exists a~unique rearran\-gement-invariant space
$X(0,\mu (R))$ over the interval $(0,\mu (R))$ endowed with the
one-dimensional Lebesgue measure such that $\|f\|_{X}=\|f^{*}\|_{X(0,
\mu(R))}$. This space is called the~\textit{representation space} of
$X$. This follows from the Luxemburg representation theorem
\citep[Chapter~2, Theorem~4.10]{BS}. Throughout this paper, the
representation space of an~r.i.~space $X$ will be
denoted by $X(0,\mu (R))$. It will be useful to notice that when
$R=(0,\infty )$ and $\mu $ is the Lebesgue measure, then every $X$ over
$(R,\mu )$ coincides with its representation space.

If $\varrho $ is an~r.i.~norm and $X=X({\varrho })$
is the r.i.~space determined by $\varrho $, we define
its \textit{fundamental function}, $\varphi _{X}$, for every
$t\in [0,\mu (R))$ by $\varphi _{X}(t)=\varrho (\chi _{E})$, where
$E\subset R$ is such that $\mu (E)=t$. The properties of
r.i.~norms and the fact that the underlying measure space is nonatomic guarantee that the fundamental function
is well defined. Moreover, one has
%
\begin{eqnarray}
\label{E:fundamental-relation}
\varphi _{X}(t)\varphi _{X'}(t)=t, \quad t\in [0,\mu (R)).
\end{eqnarray}

Let $X$ and $Y$ be r.i.~spaces over $(0,\infty )$ and
let $I\colon [0,\infty )\to [0,\infty )$ be a nondecreasing function.
Then
%
\begin{eqnarray}
\label{T:Lenka-unrestricted}
\left \|  \int _{t}^{\infty }\frac{f(s)}{I(s)}\,{\mathrm{d}}s\right \|  _{Y(0,\infty )}
\le C_{1} \|f\|_{X(0,\infty )}
\quad \mbox{for every }f\in \mathcal{M}_{+}(0,\infty )
\end{eqnarray}
holds true with some positive constant $C_{1}$ if and only if
%
\begin{eqnarray}
\label{T:Lenka-nonincreasing}
\left \|  \int _{t}^{\infty }\frac{g(s)}{I(s)}\,{\mathrm{d}}s\right \|  _{Y(0,\infty )}
\leq C_{2} \|g\|_{X(0,\infty )}
\quad
\mbox{for every nonincreasing }g\in \mathcal{M}_{+}(0,\infty )
\end{eqnarray}
is valid with some positive constant $C_{2}$. This result originated as
a consequence \citep[Corollary~9.8]{CPS} of a more general principle
established in~\citep[Theorem~9.5]{CPS} in connection with sharp
higher-order Sobolev-type embeddings and its extension to unbounded
intervals was given in~\citep[Theorem~1.10]{P}.

An important corollary of the Hardy--Littlewood inequality~\reftext{\eqref{E:HL}}
is the fact that if $f$ is a~nonincreasing function on $(0,\infty )$ and
$X$ is an~r.i.~space over $(0,\infty )$, then in fact
one has
%
\begin{eqnarray}
\label{E:corollary-of-HL}
\|f\|_{X(0,\infty )}=\sup \left \{  \int _{0}^{\infty }g^{*}(t)f(t)\,
{\mathrm{d}}t: \|g\|_{X'(0,\infty )}\leq 1\right \}  .
\end{eqnarray}
In other words, for such $f$, the supremum can be reduced to
nonincreasing functions only without any loss of information. This fact
has deep consequences and will be used in the proofs below.

For each $a\in (0,\infty )$, let $D_{a}$ denote the
\textit{dilation operator} defined on every nonnegative measurable
function $f$ on $(0,\infty )$ by
\begin{eqnarray*}
(D_{a}f)(t)=f(at),\quad t\in (0,\infty ).
\end{eqnarray*}
The operator $D_{a}$ is bounded on every rearrangement-invariant
space over $(0,\infty )$ (hence in particular on the representation
space of any r.i.~space over an~arbitrary adequate
measure space). More precisely, if $X$ is any given
r.i.~space over $(0,\infty )$ with respect to the
one-dimensional Lebesgue measure, then we have
\begin{eqnarray*}
\|D_{a}f\|_{X}\leq C\|f\|_{X}, \quad f\in X,
\end{eqnarray*}
with $C\le \max\{1,\frac{1}{a}\}$. For more details,
see~\citep[Chapter~3, Proposition~5.11]{BS}.

Among basic examples of function norms are those associated with the
standard Lebesgue spaces $L^{p}$. For $p\in (0,\infty ]$, we define the
functional $\varrho _{p}$ by
\begin{eqnarray*}
\varrho _{p}(f)=\|f\|_{p}=
\Cases{
\left (\int _{R}f^{p}\,{\mathrm{d}}\mu \right )^{\frac{1}{p}}
&\textup{if}\ 0<p<\infty ,
\cr
\operatorname{ess\,\sup }_{R}f
&\textup{if}\ p=\infty
}
\end{eqnarray*}
for $f \in \mathcal{M}_{+}(R,\mu )$. If $p\in [1,\infty ]$, then
$\varrho _{p}$ is an~r.i.~norm.

If $0< p,q\le \infty $, we define the functional $\varrho _{p,q}$ by
\begin{eqnarray*}
\varrho _{p,q}(f)=\|f\|_{p,q}=
\left \|  s^{\frac{1}{p}-\frac{1}{q}}f
^{*}(s)\right \|  _{q}
\end{eqnarray*}
for $f \in \mathcal{M}_{+}(R,\mu )$. The set $L^{p,q}$, defined as the
collection of all $f\in \mathcal{M}(R,\mu )$ satisfying $\varrho _{p,q}(|f|)<
\infty $, is called a~\textit{Lorentz space}. If either $1<p<\infty $
and $1\leq q\leq \infty $ or $p=q=1$ or $p=q=\infty $, then
$\varrho _{p,q}$ is equivalent to an~r.i.~norm
in the sense that there exists an~r.i.~norm
$\sigma $ and a~constant $C$, $0<C<\infty $, depending on $p,q$ but
independent of $f$, such that
\begin{eqnarray*}
C^{-1}\sigma (f)\leq \varrho _{p,q}(f)\leq C\sigma (f).
\end{eqnarray*}
As a~consequence, $L^{p,q}$ is considered to be
an~r.i.~space for these cases of $p,q$,
see~\citep[Chapter~4]{BS}. If either $0<p<1$ or $p=1$ and $q>1$, then
$L^{p,q}$ is a~quasi-normed space. If $p=\infty $ and $q<\infty $, then
$L^{p,q}=\{0\}$. For every $p\in [1,\infty ]$, we have $L^{p,p}=L^{p}$.
Furthermore, if $p,q,r\in (0,\infty ]$ and $q\leq r$, then the inclusion
$L^{p,q}\subset L^{p,r}$ holds.

If ${\mathbb{A}}=[\alpha _{0},\alpha _{\infty }]\in \mathbb{R}^{2}$ and
$t\in \mathbb{R}$, then we shall use the notation ${\mathbb{A}}+t=[
\alpha _{0}+t,\alpha _{\infty }+t]$.

Let $0<p,q\le \infty $, ${\mathbb{A}}=[\alpha _{0},\alpha _{\infty }]
\in \mathbb{R}^{2}$ and ${\mathbb{B}}=[\beta _{0},\beta _{\infty }]
\in \mathbb{R}^{2}$. Then we define the functionals $
\varrho _{p,q;{\mathbb{A}}}$ and
$\varrho _{p,q;{\mathbb{A}},{\mathbb{B}}}$ on $\mathcal{M}_{+}(R,
\mu )$ by
\begin{eqnarray*}
\varrho _{p,q;{\mathbb{A}}}(f)=
\left \|  t^{\frac{1}{p}-\frac{1}{q}}
\ell ^{{\mathbb{A}}}(t) f^{*}(t)\right \|  _{L^{q}(0,\infty )}
\end{eqnarray*}
and
\begin{eqnarray*}
\varrho _{p,q;{\mathbb{A}},{\mathbb{B}}}(f)=
\left \|  t^{\frac{1}{p}-
\frac{1}{q}}\ell ^{{\mathbb{A}}}(t)\ell \ell ^{{\mathbb{B}}}(t)
f^{*}(t)\right \|  _{L^{q}(0,\infty )},
\end{eqnarray*}
where
\begin{eqnarray*}
\ell ^{{\mathbb{A}}}(t)=
\Cases{
(1-\log t)^{\alpha _{0}}
&\textup{if}\ t\in (0,1),
\cr
(1+\log t)^{\alpha _{\infty }}
&\textup{if}\ t\in [1,\infty )
}
\end{eqnarray*}
and
\begin{eqnarray*}
\ell \ell ^{{\mathbb{B}}}(t)=
\Cases{
(1+\log (1-\log t))^{\beta _{0}}
&\textup{if}\ t\in (0,1),
\cr
(1+\log (1+\log t))^{\beta _{\infty }}
&\textup{if}\ t\in [1,\infty ).
}
\end{eqnarray*}
The set $L^{p,q;{\mathbb{A}}}$, defined as the collection of all
$f\in \mathcal{M}(R,\mu )$ satisfying $\varrho _{p,q;{\mathbb{A}}}(|f|)<
\infty $, is called a~\textit{Lorentz--Zygmund space}, and the set
$L^{p,q;{\mathbb{A}},{\mathbb{B}}}$, defined as the collection of all
$f\in \mathcal{M}_{+}(R,\mu )$ satisfying $
\varrho _{p,q;{\mathbb{A}},{\mathbb{B}}}(|f|)<\infty $, is called
a~\textit{generalized Lorentz--Zygmund space}. The functions of the form
$\ell ^{{\mathbb{A}}}$, $\ell \ell ^{{\mathbb{B}}}$ are called
\textit{broken logarithmic functions}. The spaces of this type proved
to be quite useful since they provide a common roof for many customary
spaces. These include not only Lebesgue spaces and Lorentz spaces, but
also all types of exponential and logarithmic Zygmund classes, and also
the spaces discovered independently by Maz'ya (in a~somewhat implicit
form involving capacitary estimates~\citep[pp.~105 and~109]{Mabook}),
Hansson~\citep{Ha} and Br\'{e}zis--Wainger~\citep{BW} who used it to
describe the sharp target space in a limiting Sobolev embedding (the
spaces can be also traced in the works of Brudnyi~\citep{B} and, in a
more general setting, Cwikel and Pustylnik~\citep{CP}). One of the
benefits of using broken logarithmic functions consists in the fact that
the underlying measure space can be considered to have either finite or
infinite measure. For the detailed study of generalized Lorentz--Zygmund
spaces we refer the reader to~\citep{EOP,EOP-broken,OP, FS}.

We further define the spaces $L^{(p,q;{\mathbb{A}})}$ through the
functionals $\varrho _{(p,q;{\mathbb{A}})}$ given on $\mathcal{M}_{+}(R,
\mu )$ by
\begin{eqnarray*}
\varrho _{(p,q;{\mathbb{A}})}(f)=
\left \|  t^{\frac{1}{p}-\frac{1}{q}}
\ell ^{{\mathbb{A}}}(t) f^{**}(t)\right \|  _{L^{q}(0,\infty )}
\end{eqnarray*}
and, in an~analogous way, all the other spaces involving various levels
of logarithms.

Let $X$ and $Y$ be r.i.~spaces over possibly
different measure spaces $(R,\mu )$ and $(S,\nu )$, respectively, and
let $T$ be an operator defined on $X$ with values in $\mathcal{M}(S,
\nu )$. We say that $T$ is \textit{bounded} from $X$ to $Y$, a fact
which is denoted by $T\colon X\to Y$, if there exists a~positive
constant $C$ such that
\begin{eqnarray*}
\|Tf\|_{Y}\leq C\|f\|_{X} , \quad f\in X.
\end{eqnarray*}
In an~important special case when $T$ is the identity operator, we say
that $X$ is \textit{embedded} into $Y$ and write $X\hookrightarrow Y$.
If $T'$ is another operator defined at least on $Y'$ with values in
$\mathcal{M}(R,\mu )$ and such that
%
\begin{eqnarray}
\label{E:duality-of-operators}
\int _{R}(Tf) g\,{\mathrm{d}}\mu =\int _{S}f(T'g)\,\mathrm{d}\nu
\end{eqnarray}
for every $f\in X$ and $g\in Y'$, then $T\colon X\to Y$ is equivalent
to $T'\colon Y'\to X'$.

Let $P$ and $Q$ be the integral operators defined by
\begin{eqnarray*}
(Pf)(t)=\frac{1}{t}\int _{0}^{t} f(s)\,{\mathrm{d}}s,\quad t\in (0,\infty ),
\end{eqnarray*}
and
\begin{eqnarray*}
(Qf)(t)=\int _{t}^{\infty }f(s)\frac{{\mathrm{d}}s}{s}, \quad t\in (0,\infty ),
\end{eqnarray*}
for those functions on $f\in \mathcal{M}_{0}(0,\infty )$ for which the
respective integrals have sense. As an interchange of integration shows,
\begin{eqnarray*}
\int _{0}^{\infty }(Pf)(t)g(t)\,{\mathrm{d}}t=\int _{0}^{\infty }f(t)(Qg)(t)\,{\mathrm{d}}t,
\end{eqnarray*}
for all $f$ and $g$ for which the integrals make sense. Hence, the
operators $P$ and $Q$ are formally adjoint with respect to the
$L^{1}$-pairing and therefore satisfy a relation in the spirit
of~\reftext{\eqref{E:duality-of-operators}}. As a~consequence, one has the
equivalence
%
\begin{eqnarray}
\label{E:equivalence-of-P-Q}
P\colon X\to Y\quad \Leftrightarrow \quad Q\colon Y'\to X'
\end{eqnarray}
for every pair of r.i.~spaces $X,Y$ over
$(0,\infty )$ (with the same operator norm). Another important example
is that when $(R,\mu )$ is arbitrary and both $T$ and $T'$ are identity
operators. Then~\reftext{\eqref{E:duality-of-operators}} is trivially satisfied
and, as a consequence, one gets
%
\begin{eqnarray}
\label{E:equivalence-of-identities}
X\hookrightarrow Y\quad \Leftrightarrow \quad Y'\hookrightarrow X'
\end{eqnarray}
for every pair of r.i.~spaces $X,Y$, again with the
same embedding constant, see~\citep[Chapter~1, Proposition~2.10]{BS}.

We will say that an~r.i.~space $Y$ over $(S,\nu )$ is
a~\textit{range partner} for a~given r.i.~space
$X$ over $(R,\mu )$ with respect to a sublinear operator $T$ if
$T\colon X\to Y$. We say that $Y$ is the~\textit{optimal range partner}
for $X$ if one has $Y\hookrightarrow Z$ for every range partner $Z$ for
$X$ with respect to $T$. We analogously define a~\textit{domain partner}
and the \textit{optimal domain partner}, that is, the largest possible
domain space.

Throughout the paper the convention that $\frac{1}{\infty }=0$, and
$0\cdot \infty =0$ is used without further explicit reference. We write
$A\approx B$ when the ratio $A/B$ is bounded from below and from above
by positive constants independent of appropriate quantities appearing
in expressions $A$ and $B$.

\section{The Hardy-Littlewood maximal operator} \label{sec3}

\noindent
In this section, the relevant r.i.~spaces are
considered over $\mathbb{R}^{n}$ endowed with the $n$-dimensional
Lebesgue measure. The Lebesgue measure of a~measurable set $E\subset
\mathbb{R}^{n}$ will be denoted by $|E|$.

The \textit{Hardy--Littlewood maximal operator}, $M$, is defined for
every locally integrable function $f$ on $\mathbb{R}^{n}$ and every
$x\in \mathbb{R}^{n}$ by
\begin{eqnarray*}
Mf(x)=\sup _{Q\owns x}\frac{1}{|Q|}\int _{Q}|f(y)|\,{\mathrm{d}}y,
\end{eqnarray*}
where the supremum is extended over all cubes $Q\subset \mathbb{R}
^{n}$, whose edges are parallel to the coordinate axes of $\mathbb{R}
^{n}$, that contain $x$.

The operator $M$ is merely sublinear, rather than linear, and it is
clearly a contraction on $L^{\infty }$. On the other hand, $Mf$ is never
integrable unless $f\equiv 0$. For every locally-integrable function
$f$ on $\mathbb{R}^{n}$, one has $|f|\leq Mf$ almost everywhere. The
most important information (for our purpose) concerning the operator
$M$, now classical, states that there exist positive constants
$c,c'$, depending only on $n$, such that
%
\begin{eqnarray}
\label{E:herz}
c(Mf)^{*}(t)\leq f^{**}(t)\leq c'(Mf)^{*}(t), \quad t\in (0,\infty ),
\end{eqnarray}
for every locally integrable function $f$ on $\mathbb{R}^{n}$. The first
inequality in~\reftext{\eqref{E:herz}} was established during the 1930s in works
of R.M.~Gabriel~\citep{G}, F.~Riesz~\citep{R} and N.~Wiener~\citep{W},
while the second was added later through the efforts of
C.~Herz~\citep{H} (for one dimension) and C.~Bennett and
R.~Sharpley~\citep{BS-paper} (for higher dimensions). The result is
summarized and proved in~\citep[Chapter~3, Theorem~3.8]{BS}.

We shall now state the first principal result of this section, in which
we characterize the optimal range partner to a given space with respect
to the operator~$M$.

\begin{theorem}
\label{T:maximal-operator}
Let $X$ be an~r.i.~space over $\mathbb{R}^{n}$ such
that
%
\begin{eqnarray}
\label{E:psi-condition}
\psi \in X'(0,\infty ),
\end{eqnarray}
where $\psi (t)=\chi _{(0,1)}(t)\log \tfrac{1}{t}$, $t\in (0,\infty )$.
Define the functional $\sigma $ by
\begin{eqnarray*}
\sigma (f)=\left \|  \int _{t}^{\infty }f^{*}(s)\frac{{\mathrm{d}}s}{s}\right \|  _{X'(0,\infty )}, \quad f\in \mathcal{M}_{+}(\mathbb{R}
^{n}).
\end{eqnarray*}
Then $\sigma $ is an~r.i.~norm and
%
\begin{eqnarray}
\label{E:M-bounded}
M\colon X\to Y,
\end{eqnarray}
where $Y=Y(\sigma ')$. Moreover, $Y$ is the optimal
\textup{(}smallest\textup{)} r.i.~space for
which~\reftext{\eqref{E:M-bounded}} holds.

Conversely, if~\reftext{\eqref{E:psi-condition}} is not true, then there does not
exist an~r.i.~space $Y$ for which~\reftext{\eqref{E:M-bounded}}
holds.
\end{theorem}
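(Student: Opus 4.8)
The plan is to reduce everything to the Riesz--Wiener--Herz equivalence~\eqref{E:herz}, which translates the maximal operator into the operator $P$ on rearrangements, and then to invoke the equivalence~\eqref{E:equivalence-of-P-Q} between $P$ and $Q$ together with the reduction principle~\eqref{T:Lenka-unrestricted}--\eqref{T:Lenka-nonincreasing} with $I(s)=s$. First I would check that $\sigma$ is an r.i.\ norm: properties (P2), (P3), (P6) are immediate from the fact that $f \mapsto f^*$ and the inner integral operator are monotone, rearrangement-invariant, and compatible with the Fatou axiom, while the triangle inequality follows from~\eqref{E:subadditivity-of-doublestar} (which controls $\int_0^t(f+g)^*$, hence via Hardy's lemma the nonincreasing-weighted tail integrals $\int_t^\infty (f+g)^*(s)\,\mathrm{d}s/s$) combined with the triangle inequality of $\|\cdot\|_{X'(0,\infty)}$. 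For the nontriviality axioms (P1), (P4), (P5), the key point is precisely the hypothesis~\eqref{E:psi-condition}: applying $\sigma$ to $\chi_E$ with $|E|=1$ produces (a multiple of) $\psi$ inside the $X'$-norm, so finiteness of $\sigma$ on characteristic functions of sets of measure one, hence by dilation on all sets of finite measure, is equivalent to $\psi\in X'(0,\infty)$; the remaining axioms then follow by the standard arguments since $X'$ itself satisfies (P1)--(P5).

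Next I would establish~\eqref{E:M-bounded}. By definition $Y=Y(\sigma')$, so $M\colon X\to Y$ means $\|Mf\|_{Y}\le C\|f\|_X$ for all $f\in X$. Using~\eqref{E:herz} we have $(Mf)^*\approx f^{**}=Pf^*$ pointwise on $(0,\infty)$, so it suffices to prove $\|Pf^*\|_{Y(0,\infty)}\le C\|f^*\|_{X(0,\infty)}$, i.e.\ that $P\colon X(0,\infty)\to Y(0,\infty)$. By~\eqref{E:equivalence-of-P-Q} this is equivalent to $Q\colon Y'(0,\infty)\to X'(0,\infty)$, that is, $\|Qg\|_{X'(0,\infty)}\le C\|g\|_{Y'(0,\infty)}$. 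But $Qg(t)=\int_t^\infty g(s)\,\mathrm{d}s/s$ and, since $Y'=Y(\sigma)$ by the relation $(Y(\sigma'))'=Y(\sigma)$, the norm $\|g\|_{Y'(0,\infty)}$ is by construction equivalent to $\|Qg^*\|_{X'(0,\infty)}$; invoking~\eqref{T:Lenka-nonincreasing}--\eqref{T:Lenka-unrestricted} with $I(s)=s$ upgrades the inequality $\|Qg^*\|_{X'}\le\|Qg^*\|_{X'}$ from nonincreasing $g$ to all nonnegative $g$ (alternatively, use~\eqref{E:subadditivity-of-doublestar-a} and the Hardy--Littlewood--P\'olya principle, noting $(Qg)^{**}\le (Qg^*)^{**}$ because $Qg\prec Qg^*$). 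This gives $M\colon X\to Y$.

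For optimality, suppose $Z$ is any r.i.\ space over $\mathbb R^n$ with $M\colon X\to Z$; I must show $Y\hookrightarrow Z$. From $M\colon X\to Z$ and~\eqref{E:herz} we get $\|Pf^*\|_{Z(0,\infty)}\le C\|f^*\|_{X(0,\infty)}$, hence $P\colon X(0,\infty)\to Z(0,\infty)$, hence by~\eqref{E:equivalence-of-P-Q} $Q\colon Z'(0,\infty)\to X'(0,\infty)$. Thus for every $g$ with $\|g\|_{Z'(0,\infty)}\le 1$ we have $\|Qg^*\|_{X'(0,\infty)}\le C$; but $\|Qg^*\|_{X'(0,\infty)}$ is (equivalent to) $\|g\|_{Y'(0,\infty)}$ by construction, so $Z'\hookrightarrow Y'$, and by~\eqref{E:equivalence-of-identities} this yields $Y=Y''\hookrightarrow Z''=Z$, as required. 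The converse statement is a contrapositive: if $M\colon X\to Z$ for some r.i.\ $Z$, then running the argument just given with $Z'$ in place of $Y'$ and testing on $g=\chi_{(0,1)}$ forces $\|Q\chi_{(0,1)}\|_{X'(0,\infty)}<\infty$, and a direct computation gives $Q\chi_{(0,1)}(t)=\log(1/t)$ for $t\in(0,1)$ and $0$ for $t\ge1$, i.e.\ $Q\chi_{(0,1)}=\psi$; hence $\psi\in X'(0,\infty)$, contradicting the failure of~\eqref{E:psi-condition}.

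The main obstacle I anticipate is the careful verification that $\sigma$ is genuinely an r.i.\ \emph{norm} rather than merely a quasinorm or a norm failing some nontriviality axiom --- in particular checking (P4) and (P5) --- and, intertwined with this, making sure that the identification $\|g\|_{Y'(0,\infty)}\approx\|Qg^*\|_{X'(0,\infty)}$ (i.e.\ that the associate space of $Y(\sigma')$ is exactly $Y(\sigma)$ with the expected norm, via~\eqref{E:corollary-of-HL} to reduce the defining supremum to nonincreasing test functions) is valid; once these structural facts are in place, the boundedness and optimality parts are clean applications of~\eqref{E:herz}, \eqref{E:equivalence-of-P-Q}, \eqref{E:equivalence-of-identities} and the reduction principle.
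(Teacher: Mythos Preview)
Your proposal is correct and follows essentially the same route as the paper's proof: verify (P1)--(P6) for $\sigma$ (with (P4) reducing via dilation to the hypothesis~\eqref{E:psi-condition}, and (P5) obtained from the corresponding property of $X'$ after a lower bound $\sigma(f)\gtrsim\|f^*(2t)\|_{X'}$), then combine~\eqref{E:herz}, \eqref{E:equivalence-of-P-Q}, and the reduction principle~\eqref{T:Lenka-unrestricted}--\eqref{T:Lenka-nonincreasing} for both boundedness and optimality, and test on $g=\chi_{(0,1)}$ for the converse. One small caution: the paper explicitly flags the ``fall of the star'' step as not following from a naive Hardy--Littlewood argument, so rely on~\eqref{T:Lenka-unrestricted}--\eqref{T:Lenka-nonincreasing} as your primary justification rather than the parenthetical claim $Qg\prec Qg^*$, which, while true, requires its own (Fubini plus Hardy--Littlewood) argument that you have not spelled out.
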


We now turn our attention to the question of the optimal domain space when
the target space is prescribed. This situation is considerably simpler
than the reverse one as no associate norms need to be involved.

\begin{theorem}
\label{T:maximal-operator-domain}
Let $Y$ be an~r.i.~space over $\mathbb{R}^{n}$ such
that
%
\begin{eqnarray}
\label{E:psi-condition-domain}
\psi \in Y(0,\infty ),
\end{eqnarray}
where $\psi (t)=\min \{1,\frac{1}{t}\}$ for $t\in (0,\infty )$. Define
the functional $\varrho $ by
\begin{eqnarray*}
\varrho (f)=\|f^{**}\|_{Y(0,\infty )}, \quad f\in \mathcal{M}_{+}(
\mathbb{R}^{n}).
\end{eqnarray*}
Then $\varrho $ is an~r.i.~norm
and~\reftext{\eqref{E:M-bounded}} is satisfied, where $X=X(\varrho )$. Moreover,
$X$ is the optimal \textup{(}largest\textup{)} rearrangement-invariant
space for which~\reftext{\eqref{E:M-bounded}} holds.

Conversely, if~\reftext{\eqref{E:psi-condition-domain}} is not true, then there
does not exist an~r.i.~space $X$ for
which~\reftext{\eqref{E:M-bounded}} holds.
\end{theorem}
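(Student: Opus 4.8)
The plan is to reduce everything, via the Riesz--Wiener--Herz equivalence~\eqref{E:herz}, to a statement about the operator $P$ acting on representation spaces. First I would observe that, because $(Mf)^* \approx f^{**}$ by~\eqref{E:herz}, for any r.i. space $Z$ over $\mathbb{R}^n$ one has $M\colon X\to Z$ if and only if $\|f^{**}\|_{Z(0,\infty)}\le C\|f^*\|_{X(0,\infty)}$ for all $f$, i.e. if and only if $P\colon X(0,\infty)\to Z(0,\infty)$ (using that $f^{**}=P(f^*)$ and that $P$ applied to a nonincreasing function is again nonincreasing). Thus the whole problem takes place on $(0,\infty)$ with the Lebesgue measure, and the candidate domain is exactly the largest r.i. space $X(0,\infty)$ on which $P$ maps boundedly into the fixed $Y(0,\infty)$.

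Next I would verify that $\varrho(f)=\|f^{**}\|_{Y(0,\infty)}$ is an r.i. norm on $\mathcal{M}_+(\mathbb{R}^n)$. Properties (P2), (P3), (P6) are immediate from the corresponding properties of $Y$ together with monotonicity and the Fatou property of the operation $f\mapsto f^{**}$. The triangle inequality in (P1) is the crucial structural point: it follows from the subadditivity~\eqref{E:subadditivity-of-doublestar-a} of the maximal rearrangement, $(f+g)^{**}\le f^{**}+g^{**}$, combined with the lattice and norm axioms of $Y$. For (P4) and (P5) one needs $\varrho(\chi_E)<\infty$ for $|E|<\infty$ and a local $L^1$ bound; here the hypothesis~\eqref{E:psi-condition-domain} enters, since $(\chi_E)^{**}(t)=\min\{1,|E|/t\}$, whose rearrangement is a dilation of $\psi(t)=\min\{1,1/t\}$, and the dilation operator is bounded on $Y(0,\infty)$ by~\citep[Chapter~3, Proposition~5.11]{BS}; so $\psi\in Y(0,\infty)$ gives exactly $\varrho(\chi_E)<\infty$, and (P5) then follows from (P4) together with $\int_E f\le |E|\,f^{**}(|E|)\le C_E\|f^{**}\|_{Y(0,\infty)}$ using the lattice and local-embedding properties of $Y$. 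I also need to check $X\ne\{0\}$, which again is where $\psi\in Y(0,\infty)$ is used.

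Granting that $\varrho$ is an r.i. norm, boundedness $M\colon X\to Y$ is immediate: $\|Mf\|_Y\le c'\|f^{**}\|_{Y(0,\infty)}=c'\varrho(f)=c'\|f\|_X$ by~\eqref{E:herz}. For optimality, suppose $Z$ is any r.i. space over $\mathbb{R}^n$ with $M\colon Z\to Y$. Then for $f\in Z$ we have, using the first inequality in~\eqref{E:herz}, $\varrho(f)=\|f^{**}\|_{Y(0,\infty)}=\|(f^{**})^*\|_{Y(0,\infty)}\le \tfrac1c\|(Mf)^*\|_{Y(0,\infty)}=\tfrac1c\|Mf\|_Y\le \tfrac{C}{c}\|f\|_Z$, which says precisely $Z\hookrightarrow X$; hence $X$ is the largest admissible domain. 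Finally, for the converse: if $\psi\notin Y(0,\infty)$, then for any r.i. space $X$ over $\mathbb{R}^n$ pick a set $E$ with $0<|E|<\infty$; since $(M\chi_E)^*(t)\ge c^{-1}(\chi_E)^{**}(t)=c^{-1}\min\{1,|E|/t\}$ is (a dilate of) $\psi$, boundedness of dilations on $Y(0,\infty)$ forces $\|M\chi_E\|_Y=\infty$, so $M$ cannot map $X$ into $Y$. The main obstacle in the whole argument is the careful handling of the axioms (P1), (P4), (P5) for $\varrho$ — in particular making precise the role of hypothesis~\eqref{E:psi-condition-domain} and of the boundedness of the dilation operator — since everything else reduces cleanly through~\eqref{E:herz}.
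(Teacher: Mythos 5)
Your proposal is correct and follows essentially the same route as the paper: verify the axioms for $\varrho$ (triangle inequality via $(f+g)^{**}\le f^{**}+g^{**}$, (P4) via dilation of $\psi$, (P5) via $\int_E f\le |E|f^{**}(|E|)$ and the lattice property), then get boundedness, optimality, and the converse all directly from~\reftext{\eqref{E:herz}}. The only blemishes are cosmetic: the preliminary reduction to $P$ is not actually needed, and in the optimality and converse steps you cite the ``first'' inequality of~\reftext{\eqref{E:herz}} where you in fact use $f^{**}(t)\le c'(Mf)^{*}(t)$, i.e.\ the second one.
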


In our final result of this section we present a~collection of
nontrivial examples based on Lorentz--Zygmund spaces.

\begin{theorem}
\label{T:-maximal-operator-GLZ}
Let $p,q\in [1,\infty ]$, ${\mathbb{A}}\in \mathbb{R}^{2}$. Then
\begin{eqnarray}
M\colon L^{p,q; {\mathbb{A}}} \to
\left\{
\begin{array}{l@{\quad }l@{\quad }l}
L^{1, 1, \mathbb{A} - 1},   & p = 1, q = 1, \alpha _{0}\geq 1,
\alpha _{\infty }< -1, &\mathrm{(a)}
\label{E:maximal_p1}
\\
Y,
& p = 1, q = 1, \alpha _{0}\geq 1, -1\leq \alpha _{\infty }
\leq 0, & \mathrm{(b)}
\label{E:maximal_p2}
\\
L^{p,q;{\mathbb{A}}}, & 1< p <\infty ~ or\\
& p=\infty , 1\leq q<\infty , \alpha _{0} + \frac{1}{q} < 0~ or \\
& p=\infty , q=\infty , \alpha _{0}\leq 0,
\end{array}\right.
\end{eqnarray}
where $Y$ is the \textup{(}unique\textup{)} rearrangement-invariant
space whose associate space $Y'$ satisfies
\begin{eqnarray*}
\|f\|_{Y'}=\sup _{0<t<\infty }\ell ^{-{\mathbb{A}}}(t)\int _{t}^{\infty
}f^{*}(s)\,\frac{{\mathrm{d}}s}{s}, \quad f\in \mathcal{M}_{+}(\mathbb{R}^{n}).
\end{eqnarray*}
These spaces are the optimal range partners with respect to~$M$.
\end{theorem}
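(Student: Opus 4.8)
The plan is to derive Theorem~\ref{T:-maximal-operator-GLZ} as a direct application of Theorems~\ref{T:maximal-operator} and~\ref{T:maximal-operator-domain}. By Theorem~\ref{T:maximal-operator}, the optimal range partner of $X=L^{p,q;\mathbb{A}}$ exists precisely when $\psi(t)=\chi_{(0,1)}(t)\log(1/t)$ lies in $X'(0,\infty)$, and when it does, the optimal range space $Y$ is the associate space of the r.i.\ norm $\sigma(f)=\|Qf^{*}\|_{X'(0,\infty)}$, where $Q$ is the operator from the Preliminaries. So the whole proof reduces to two tasks: (i) identify the associate space of $L^{p,q;\mathbb{A}}$ and check the membership $\psi\in X'(0,\infty)$ in each of the listed parameter regimes (and check that in the complementary regimes $\psi\notin X'(0,\infty)$, so that no range partner exists — this explains why the table only lists those cases); and (ii) compute $\|Qg\|_{X'(0,\infty)}$ for nonincreasing $g$ and recognize the resulting space.

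First I would recall the standard description of associate spaces of (generalized) Lorentz--Zygmund spaces: for the relevant ranges of parameters one has, up to equivalence of norms, $\left(L^{p,q;\mathbb{A}}\right)'=L^{p',q';-\mathbb{A}}$ when $1<p<\infty$; for $p=1$, $q=1$ the associate space is an $L^{\infty}$-type space with a logarithmic weight, namely $\|f\|_{X'(0,\infty)}\approx\sup_{t}\ell^{-\mathbb{A}}(t)f^{*}(t)$ roughly speaking (with the precise shape dictated by $\alpha_0,\alpha_\infty$); and for $p=\infty$ the space $L^{\infty,q;\mathbb{A}}$ is a Lorentz--Zygmund space whose associate space is again of Lorentz--Zygmund type. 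Using these I would verify, regime by regime, that $\psi\in X'(0,\infty)$: near $0$ this amounts to a one-dimensional integrability/boundedness condition on $(1-\log t)$ against the weight defining $X'(0,\infty)$, which produces exactly the stated constraints ($\alpha_0\ge 1$ in the $p=q=1$ case, $\alpha_0+\tfrac1q<0$ in the $p=\infty$, $q<\infty$ case, $\alpha_0\le 0$ in the $p=q=\infty$ case), while near $\infty$ the function $\psi$ vanishes so only the behaviour of the space at $0$ matters there, except that the overall membership of $\psi$ still interacts with the $\alpha_\infty$ parameter through the structure of $X'$. The dichotomy $\alpha_\infty<-1$ versus $-1\le\alpha_\infty\le 0$ in the $p=q=1$ line is where the two sub-cases (a) and (b) split.

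Next, for the computation of $\sigma$, I would use~\eqref{E:corollary-of-HL} together with the equivalence~\eqref{T:Lenka-unrestricted}$\Leftrightarrow$\eqref{T:Lenka-nonincreasing} (with $I(s)=s$) to reduce everything to the action of $Q$ on nonincreasing functions. In the case $1<p<\infty$ (and the analogous $p=\infty$ sub-cases), the key point is that $Q$ is bounded on $X'(0,\infty)=L^{p',q';-\mathbb{A}}(0,\infty)$ — this is the classical boundedness of the conjugate Hardy operator on Lorentz--Zygmund spaces, valid exactly in the listed parameter ranges — so $\sigma\approx\|\cdot\|_{X'(0,\infty)}$, hence $Y=X''=L^{p,q;\mathbb{A}}$, recovering the ``$Mf$ behaves like $f^{**}$ and $L^{p,q;\mathbb{A}}$ is already $P$-invariant'' phenomenon. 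In the case $p=q=1$, $Q$ is \emph{not} bounded on $X'(0,\infty)$, and one genuinely gets a smaller range space; here for sub-case (a), $\alpha_\infty<-1$, a direct estimate of $\sup_t\ell^{-\mathbb{A}}(t)\int_t^\infty f^{*}(s)\,ds/s$ for nonincreasing $f$ identifies $\sigma$ as (equivalent to) the associate norm of $L^{1,1,\mathbb{A}-1}$, using the standard ``$Q$ shifts one logarithm'' computation $\int_t^\infty (1+|\log s|)^{\alpha}\,ds/s\approx(1+|\log t|)^{\alpha+1}$; and for sub-case (b), $-1\le\alpha_\infty\le 0$, the integral $\int_t^\infty f^{*}(s)\,ds/s$ does not decay fast enough at infinity for the target to remain in the Lorentz--Zygmund scale, so the optimal range $Y$ can only be described abstractly as the space whose associate norm is the supremum expression displayed in the statement — which is exactly what $\sigma'{}'=\sigma$ forces since $\sigma(f)=\sup_t\ell^{-\mathbb{A}}(t)\int_t^\infty f^{*}(s)\,ds/s$ once one plugs in the $L^{\infty}$-type form of $X'=\left(L^{1,1;\mathbb{A}}\right)'$.

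The main obstacle is the bookkeeping in the $p=q=1$ row: one must (1) pin down the associate norm of $L^{1,1;\mathbb{A}}$ carefully enough (the broken-logarithm weight means the sup is really a maximum of a near-$0$ piece with weight $(1-\log t)^{-\alpha_0}$ and a near-$\infty$ piece with weight $(1+\log t)^{-\alpha_\infty}$), (2) show $\psi\in X'(0,\infty)$ iff $\alpha_0\ge 1$ (the near-$\infty$ condition being automatic since $\psi$ is supported in $(0,1)$, though one should double-check that no hidden constraint on $\alpha_\infty$ sneaks in from the $L^\infty$ structure — it does not, which is why \emph{every} $\alpha_\infty\le 0$, and in fact the table implicitly allows $\alpha_\infty$ to range freely subject only to $L^{1,1;\mathbb{A}}$ being a genuine r.i.\ space, split into $<-1$ and $\ge -1$), and (3) correctly identify the threshold $\alpha_\infty=-1$ that distinguishes whether $Qg$ stays in an $L^{1}$-logarithmic-weight space (sub-case (a)) or escapes it (sub-case (b)). I would treat (3) by explicitly testing $\sigma$ against the functions $g_a=\chi_{(0,a)}$ and against slowly decaying $g$, to show that when $\alpha_\infty=-1$ (or $>-1$) no Lorentz--Zygmund space can serve as $Y$, forcing the abstract description. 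Everything else — the $1<p<\infty$ and $p=\infty$ rows — is routine once the boundedness of $P$ (equivalently $Q$ on the associate side, via~\eqref{E:equivalence-of-P-Q}) on the relevant Lorentz--Zygmund spaces is invoked, combined with~\eqref{E:herz} and Theorem~\ref{T:maximal-operator}.
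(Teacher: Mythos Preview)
Your approach is essentially the paper's: reduce everything to Theorem~\ref{T:maximal-operator}, identify $(L^{1,1;\mathbb{A}})'=L^{\infty,\infty;-\mathbb{A}}$, check $\psi\in X'(0,\infty)$ (giving $\alpha_0\ge 1$), and for case~(a) show by a direct two-sided estimate that the resulting $\sigma$ coincides with the $L^{\infty,\infty;-\mathbb{A}+1}$ norm, whence $Y=L^{1,1;\mathbb{A}-1}$, while case~(b) stays in the abstract $\sup_t\ell^{-\mathbb{A}}(t)\int_t^\infty f^*(s)\,ds/s$ form. The only cosmetic difference is that for the $1<p<\infty$ and $p=\infty$ rows the paper invokes the shortcut that once $M\colon X\to X$ is known (citing boundedness of $P$ on $L^{p,q;\mathbb{A}}$ from the literature), the inequality $f^*\le f^{**}$ already forces $X$ to be its own optimal range partner, rather than computing $\sigma\approx\|\cdot\|_{X'}$ via $Q$-boundedness on $X'$; the two are equivalent through~\eqref{E:equivalence-of-P-Q}.
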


We note that the space $Y'$, given in terms of an operator-induced norm,
cannot be expressed in terms of a Lorentz--Zygmund norm.

We shall now proceed to prove the stated results.

\begin{proof}[Proof of \reftext{Theorem~\ref{T:maximal-operator}}]
The functional $\sigma $ is obviously rearrangement invariant and,
thanks to the Monotone Convergence Theorem, it satisfies the lattice
axiom and the Fatou axiom. From (P1), only the triangle inequality needs
proving. Let $f,g\in \mathcal{M}(\mathbb{R}^{n})$. By the definition of
the associate space, one has
\begin{eqnarray*}
\left \|  \int _{t}^{\infty }(f+g)^{*}(s)\frac{{\mathrm{d}}s}{s}\right \|  _{X'(0,\infty )}
=
\sup _{\|h\|_{X(0,\infty )}\leq 1}
\int _{0}^{\infty }h(t)\int _{t}^{\infty }(f+g)^{*}(s)\frac{{\mathrm{d}}s}{s}\,{\mathrm{d}}t.
\end{eqnarray*}
Since the function
\begin{eqnarray*}
t\mapsto \int _{t}^{\infty }(f+g)^{*}(s)\frac{{\mathrm{d}}s}{s}
\end{eqnarray*}
is nonincreasing on $(0,\infty )$, we in fact have
(cf.~\reftext{\eqref{E:corollary-of-HL}})
\begin{eqnarray*}
\left \|  \int _{t}^{\infty }(f+g)^{*}(s)\frac{{\mathrm{d}}s}{s}\right \|  _{X'(0,\infty )}
=
\sup _{\|h\|_{X(0,\infty )}\leq 1}
\int _{0}^{\infty }h^{*}(t)\int _{t}^{\infty }(f+g)^{*}(s)\frac{{\mathrm{d}}s}{s}\,{\mathrm{d}}t.
\end{eqnarray*}
Thus, by the Fubini theorem,
\begin{eqnarray*}
\left \|  \int _{t}^{\infty }(f+g)^{*}(s)\frac{{\mathrm{d}}s}{s}\right \|  _{X'(0,\infty )}
&=
\sup _{\|h\|_{X(0,\infty )}
\leq 1}\int _{0}^{\infty }(f+g)^{*}(s)h^{**}(s)\,{\mathrm{d}}s.
\end{eqnarray*}
By~\reftext{\eqref{E:subadditivity-of-doublestar}} and the Hardy lemma, one has,
for every such $h$,
\begin{eqnarray*}
\int _{0}^{\infty }(f+g)^{*}(s)h^{**}(s)\,{\mathrm{d}}s\leq \int _{0}^{\infty }f^{*}(s)h^{**}(s)\,{\mathrm{d}}s+\int _{0}^{\infty }g^{*}(s)h^{**}(s)\,{\mathrm{d}}s.
\end{eqnarray*}
This estimate, combined with the preceding identity and the
subadditivity of the supremum, finally yields
\begin{eqnarray*}
\left \|  \int _{t}^{\infty }(f+g)^{*}(s)\frac{{\mathrm{d}}s}{s}\right \|  _{X'(0,{\infty })}
\leq \left \|  \int _{t}^{\infty
}f^{*}(s)\frac{{\mathrm{d}}s}{s}\right \|  _{X'(0,{\infty })}
+
\left \|  \int _{t}^{\infty }g
^{*}(s)\frac{{\mathrm{d}}s}{s}\right \|  _{X'(0,{\infty })},
\end{eqnarray*}
establishing the triangle inequality for $\sigma $.

As for (P4), let $E\subset \mathbb{R}^{n}$ be a set of finite measure.
We need to prove that
\begin{eqnarray*}
\left \|  \int _{t}^{\infty }\chi _{E}^{*}(s)\frac{{\mathrm{d}}s}{s}\right \|  _{X'(0,\infty )}<\infty .
\end{eqnarray*}
Since $\chi _{E}^{*}=\chi _{(0,|E|)}$, this amounts to showing the
finiteness of the quantity
\begin{eqnarray*}
\left \|  \chi _{(0,|E|)}(t)\int _{t}^{|E|}\frac{{\mathrm{d}}s}{s}\right \|  _{X'(0,\infty )}=\left \|  \chi _{(0,|E|)}(t)\log
\tfrac{|E|}{t}\right \|  _{X'(0,\infty )}.
\end{eqnarray*}
As $D_{|E|}(\chi _{(0,|E|)}(t)\log \tfrac{|E|}{t})=\chi _{(0,1)}(t)
\log \tfrac{1}{t}$, and the dilation operator $D_{|E|}$ is bounded on
$X'(0,\infty )$, we obtain that $\left \|  \chi _{(0,|E|)}(t)\log
\tfrac{|E|}{t}\right \|  _{X'(0,\infty )}$ is finite if and only
if~\reftext{\eqref{E:psi-condition}} holds, which, however, is guaranteed by the
assumption. This shows (P4).

Finally, to verify (P5), let $f\in \mathcal{M}_{+}(\mathbb{R}^{n})$ and
let $E\subset \mathbb{R}^{n}$ be of finite measure. Then, by the
monotonicity of $f^{*}$, we obtain
\begin{eqnarray*}
\sigma (f)
&
\geq \left \|  \int _{t}^{2t}f^{*}(s)\frac{{\mathrm{d}}s}{s}\right \|  _{X'(0,\infty )}\geq \left \|  f^{*}(2t)\int _{t}
^{2t}\frac{{\mathrm{d}}s}{s}\right \|  _{X'(0,\infty )}
=
\left \|  f^{*}(2t)\right \|
_{X'(0,\infty )}\log 2.
\end{eqnarray*}
Since $X'$ itself is an~r.i.~space, it satisfies (P5).
In other words, there is a positive constant $C_{E}$,
independent of $f$, such that
\begin{eqnarray*}
\int _{E}f\,{\mathrm{d}}\mu \leq C_{E}\left \|  f\right \|  _{X'}.
\end{eqnarray*}
By the rearrangement invariance of the space $X'$ and the boundedness
of the dilation operator on $X'(0,\infty )$, we finally get from the
preceding estimates that
\begin{eqnarray*}
\int _{E}f\,{\mathrm{d}}\mu \leq C_{E}\left \|  f^{*}\right \|  _{X'(0,\infty )}\leq
\widetilde{C}_{E}\left \|  f^{*}(2t)\right \|  _{X'(0,\infty )}
\leq \frac{\widetilde{C}_{E}}{\log 2}\sigma (f)
\end{eqnarray*}
for some positive constant $\widetilde{C}_{E}$,
independent of $f$. This shows that $\sigma $ satisfies (P5) and,
altogether, that $\sigma $ is an~r.i.~norm.

We shall now show that $M\colon X\to Y$. Recall that
\begin{eqnarray*}
\left \|  \int _{t}^{\infty }g^{*}(s)\frac{{\mathrm{d}}s}{s}\right \|  _{X'(0,{\infty })}=\|g\|_{Y'(0,{\infty })}, \quad
g\in \mathcal{M}_{+}(0,{\infty }).
\end{eqnarray*}
The next step is getting rid of the star in the last identity, which
can be done thanks to the equivalence of \reftext{\eqref{T:Lenka-unrestricted}}
and \reftext{\eqref{T:Lenka-nonincreasing}}. We conclude that there exists
a~positive constant $C$ such that,
\begin{eqnarray*}
\left \|  \int _{t}^{\infty }g(s)\frac{{\mathrm{d}}s}{s}\right \|  _{X'(0,{\infty })}\leq C\|g\|_{Y'(0,{\infty })}
, \quad g\in \mathcal{M}_{+}(0,{\infty }).
\end{eqnarray*}
We emphasize that this step (the fall of a star) is quite deep and that
it does not follow from the Hardy--Littlewood inequality (as it might
deceptively appear) because the integration takes place far away from
zero. Once the inequality is unrestricted to monotone functions, we are
entitled to apply the~standard argument using associate spaces.
Using~\reftext{\eqref{E:equivalence-of-P-Q}}, we get
\begin{eqnarray*}
\left \|  \frac{1}{t}\int _{0}^{t}g(s)\,{\mathrm{d}}s\right \|  _{Y(0,{\infty })}\leq C\|g\|_{X(0,{\infty })}, \quad
g\in \mathcal{M}_{+}(0,{\infty }),
\end{eqnarray*}
with the constant $C$ undamaged. Now we need our star back, but this
time that is achieved easily. We just restrict the last inequality to
the cone of nonincreasing functions and obtain
\begin{eqnarray*}
\left \|  \frac{1}{t}\int _{0}^{t}g^{*}(s)\,{\mathrm{d}}s\right \|  _{Y(0,{\infty })}\leq C\|g^{*}\|_{X(0,{\infty })}
, \quad g\in \mathcal{M}_{+}(0,{\infty }).
\end{eqnarray*}
Applying the rearrangement invariance of the space $X$ and using the
correspondence between an~r.i.~space and its
representation space, we readily see that this can be rewritten as
\begin{eqnarray*}
\left \|  \frac{1}{t}\int _{0}^{t}f^{*}(s)\,{\mathrm{d}}s\right \|  _{Y(0,{\infty })}\leq C\|f\|_{X}, \quad
f\in \mathcal{M}(\mathbb{R}^{n}).
\end{eqnarray*}
By the first inequality in~\reftext{\eqref{E:herz}}, we obtain that there exists
a~positive constant $C'$ such that
\begin{eqnarray*}
\left \|  (Mf)^{*}\right \|  _{Y(0,{\infty })}\leq C'\|f\|_{X}, \quad
\mathcal{M}(\mathbb{R}^{n}).
\end{eqnarray*}
Finally, the rearrangement invariance of the space $Y$ yields
\begin{eqnarray*}
\left \|  Mf\right \|  _{Y}\leq C'\|f\|_{X}, \quad \mathcal{M}(\mathbb{R}^{n}).
\end{eqnarray*}
In other words, $M\colon X\to Y$.

We shall now establish the optimality property of $Y$. To this end,
assume that, for some r.i.~space $Z$ over
$\mathbb{R}^{n}$, we have $M\colon X\to Z$. This means that there exists
a~positive constant $C$ such that for every $f\in L^{1}_{
\operatorname{loc}}(\mathbb{R}^{n})$ the inequality
\begin{eqnarray*}
\|Mf\|_{Z}\leq C\|f\|_{X}
\end{eqnarray*}
holds. Translated to the world of rearrangements, this reads
\begin{eqnarray*}
\|(Mf)^{*}\|_{Z(0,\infty )}\leq C\|f^{*}\|_{X(0,\infty )}.
\end{eqnarray*}
Using the second inequality in~\reftext{\eqref{E:herz}}, we get
\begin{eqnarray*}
\left \|  \frac{1}{t}\int _{0}^{t}f^{*}(s)\,{\mathrm{d}}s\right \|  _{Z(0,{\infty })}\leq C'\|f^{*}\|_{X(0,{\infty })}
, \quad \mathcal{M}(\mathbb{R}
^{n}).
\end{eqnarray*}
with some positive constant $C'$. A special case of the
Hardy--Littlewood inequality together with (P2) for $Z$ now yields
\begin{eqnarray*}
\left \|  \frac{1}{t}\int _{0}^{t}g(s)\,{\mathrm{d}}s\right \|  _{Z(0,{\infty })}
\leq \left \|  \frac{1}{t}\int _{0}
^{t}g^{*}(s)\,{\mathrm{d}}s\right \|  _{Z(0,{\infty })} , \quad g\in
\mathcal{M}_{+}(0,{\infty }).
\end{eqnarray*}
Thus, since $\|g\|_{X(0,\infty )}=\|g^{*}\|_{X(0,\infty )}$, we have
\begin{eqnarray*}
\left \|  \frac{1}{t}\int _{0}^{t}g(s)\,{\mathrm{d}}s\right \|  _{Z(0,{\infty })}\leq C'\|g\|_{X(0,{\infty })}, \quad
g\in \mathcal{M}_{+}(0,{\infty }).
\end{eqnarray*}
By~\reftext{\eqref{E:equivalence-of-P-Q}}, this is nothing else than
\begin{eqnarray*}
\left \|  \int _{t}^{\infty }g(s)\frac{{\mathrm{d}}s}{s}\right \|  _{X'(0,{\infty })}\leq C'\|g\|_{Z'(0,{\infty })}
, \quad g\in \mathcal{M}_{+}(0,{\infty }).
\end{eqnarray*}
Restricting this inequality to nonincreasing functions, we get
\begin{eqnarray*}
\left \|  \int _{t}^{\infty }g^{*}(s)\frac{{\mathrm{d}}s}{s}\right \|  _{X'(0,{\infty })}\leq C'\|g^{*}\|_{Z'(0,{\infty })}
, \quad g\in \mathcal{M}_{+}(0,\infty ).
\end{eqnarray*}
By the definition of $Y'$ and by the rearrangement invariance of
$Z'(0,{\infty })$, this can be rewritten as
\begin{eqnarray*}
\left \|  g\right \|  _{Y'}\leq C'\|g\|_{Z'},
\quad g\in \mathcal{M}_{+}(\mathbb{R}^{n}).
\end{eqnarray*}
In other words, we have established the embedding $Z'\hookrightarrow
Y'$, which is, due to~\reftext{\eqref{E:equivalence-of-identities}}, equivalent
to $Y\hookrightarrow Z$. This shows that $Y$ is indeed the optimal range
partner for $X$ with respect to~$M$.

Finally, assume that $\psi \notin X'(0,\infty )$ and suppose that
$M\colon X\to Y$ for some $Y$. Then, following the same line of argument
as above, we obtain that
\begin{eqnarray*}
\|Qg^{*}\|_{X'(0,\infty )}\leq C\|g\|_{Y'(0,\infty )}, \quad g\in \mathcal{M}_{+}(0,
\infty ),
\end{eqnarray*}
with some $C$, $0<C<\infty $, independent of $g$. Inserting $g=\chi _{(0,1)}$, we obtain that the right side of the last
inequality is finite, since $Y'$ is an~r.i.~space,
and, as such, it must obey the axiom (P4). The left side is however
infinite, because we have
\begin{eqnarray*}
\|Q\chi _{(0,1)}^{*}\|_{X'(0,\infty )}=\|\psi \|_{X'(0,\infty )}=
\infty .
\end{eqnarray*}
This is absurd, hence there is no such $Y$. The proof is complete.
\end{proof}

\begin{proof}[Proof of \reftext{Theorem~\ref{T:maximal-operator-domain}}]
The functional $\varrho $ obviously obeys (P1), (P2), (P3) and (P6). In
particular, the triangle inequality follows immediately from the
triangle inequality for $Y(0,\infty )$
and~\reftext{\eqref{E:subadditivity-of-doublestar}}. Thanks to the boundedness of
the dilation operator on $Y(0,\infty )$, (P4) is equivalent to
$\chi _{(0,1)}^{**}\in Y(0,\infty )$, which is however guaranteed by the
assumption of the theorem, since $\chi _{(0,1)}^{**}=\psi $. Finally,
(P5) follows easily from the chain
\begin{eqnarray*}
\varrho (g)\geq \|g^{**}\chi _{(0,|E|)}\|_{Y(0,\infty )}
\geq g^{**}(|E|)
\|\chi _{(0,|E|)}\|_{Y(0,\infty )}
\geq \frac{1}{|E|}\|\chi _{(0,|E|)}\|_{Y(0,
\infty )}\int _{E} g(x)\,{\mathrm{d}}x,
\end{eqnarray*}
where $E\subset \mathbb{R}^{n}$ is an arbitrary set of finite measure
and $g\in \mathcal{M}_{+}(\mathbb{R}^{n})$. We used the monotonicity of
$g^{**}$ and the Hardy--Littlewood inequality. The operator $M$ is
obviously bounded from $X$ to $Y$ thanks to~\reftext{\eqref{E:herz}}. The
optimality of $X$ follows from the following simple argument. Suppose
that $M\colon Z\to Y$ for some r.i.~space $Z$. Then
$\|Mf\|_{Y}\leq C\|f\|_{Z}$ for some $C>0$ and all $f\in Z$. Therefore,
by~\reftext{\eqref{E:herz}} once again, we have $\|f^{**}\|_{Y}\leq C\|f\|_{Z}$,
which, however, is nothing else than the embedding $Z\hookrightarrow
X$. Finally, if $\psi \notin Y(0,\infty )$ then there is no domain
partner for $Y$ with respect to $M$, because if there was one, say
$X$, then one would have in particular $\|\chi _{(0,1)}^{**}\|_{Y(0,
\infty )}\leq C\|\chi _{(0,1)}\|_{X(0,\infty )}$, but the right-hand side
is finite due to (P4) for $X$ and the left-hand side is equal to
infinity since $\psi \notin Y(0,\infty )$. The proof is complete.
\end{proof}

\begin{proof}[Proof of \reftext{Theorem~\ref{T:-maximal-operator-GLZ}}]
We first recall that if for an~r.i.~space $X$ one has
$M\colon X\to X$, then automatically $X$ is the optimal range (and
domain) partner for itself with respect to $M$. This immediately follows
from the inequality $f^{**}\geq f^{*}$ combined with~\reftext{\eqref{E:herz}}.
Now~\citep[Theorem~3.8]{OP} together with~\reftext{\eqref{E:herz}} implies that
$M\colon L^{p,q;{\mathbb{A}}}\to L^{p,q;{\mathbb{A}}}$ when either
$1<p<\infty $ or $p=\infty $, $1\leq q<\infty $ and $\alpha _{0}+
\frac{1}{q}<0$ or $p=\infty $, $q=\infty $ and $\alpha _{0}\leq 0$. This
proves the assertion in all cases except (\ref{E:maximal_p1}a) and
(\ref{E:maximal_p2}b).

Assume now that $p=1$, $q=1$, $\alpha _{0}\geq 1$ and $\alpha _{\infty
}\leq 0$. By~\citep[Theorem~7.1]{OP}, $L^{1,1;{\mathbb{A}}}$ is
equivalent to an~r.i.~space. Moreover,
by~\citep[Theorem~6.6]{OP}, $(L^{1,1;{\mathbb{A}}})'=L^{\infty ,
\infty ;-{\mathbb{A}}}$. Thus, one has
\begin{eqnarray*}
\|\psi \|_{X'(0,\infty )}
\approx \sup _{0<t\leq 1}(1-\log t)^{1-\alpha
_{0}}(t)<\infty ,
\end{eqnarray*}
since $\alpha _{0}\geq 1$. In other words, $\psi \in X'(0,\infty )$.
Consequently, by \reftext{Theorem~\ref{T:maximal-operator}}, the optimal range
partner $Y$ for $L^{1,1;{\mathbb{A}}}$ with respect to~$M$ satisfies
%
\begin{eqnarray}
\label{E:Y'}
\|f\|_{Y'}=\left \|  \int _{t}^{\infty }f^{*}(s)\frac{{\mathrm{d}}s}{s}\right \|  _{X'(0,\infty )}
=\sup _{0<t<\infty }\ell ^{-{\mathbb{A}}}(t)
\int _{t}^{\infty }f^{*}(s)\,\frac{{\mathrm{d}}s}{s}, \quad f\in \mathcal{M}_{+}(\mathbb{R}^{n}).
\end{eqnarray}
This establishes (\ref{E:maximal_p2}b).

It remains to prove (\ref{E:maximal_p1}a). To do this we have to show
that, for this choice of parameters, the space $Y$ whose associate space
has norm given by~\reftext{\eqref{E:Y'}} coincides with $L^{1,1;{\mathbb{A}}-1}$.
We have
\begin{eqnarray*}
\|f\|_{Y'}
&=&\sup _{0<t<\infty }\ell ^{-{\mathbb{A}}}(t)\int _{t}^{
\infty }f^{*}(s)\,\frac{{\mathrm{d}}s}{s}
\\
&=&\sup _{0<t<\infty }\ell ^{-{\mathbb{A}}}(t)\int _{t}^{\infty }f^{*}(s)
\ell ^{-{\mathbb{A}}+1}(s)\ell ^{{\mathbb{A}}-1}(s)\frac{{\mathrm{d}}s}{s}
\\
&\leq &\left (\sup _{0<s<\infty }f^{*}(s)\ell ^{-{\mathbb{A}}+1}(s)\right )
\left (\sup _{0<t<\infty }\ell ^{-{\mathbb{A}}}(t)\int _{t}^{\infty }
\ell ^{{\mathbb{A}}-1}(s)\frac{{\mathrm{d}}s}{s}\right )
\\
&\approx &\|f\|_{L^{\infty ,\infty ; -{\mathbb{A}}+1}},
\end{eqnarray*}
and, conversely,
\begin{eqnarray*}
\|f\|_{Y'}
&\geq &\max \left \{
\sup _{0<t<1}(1-\log t)^{-\alpha _{0}}
\int _{t}^{\sqrt{t}}f^{*}(s)\,\frac{\mathrm{
d}s}{s},
\sup _{1<t<\infty }(1+\log t)^{-\alpha _{\infty }}\int _{t}
^{t^{2}}f^{*}(s)\,\frac{\mathrm{
d}s}{s}
\right \}
\\
&\geq &\max \left \{
\sup _{0<t<1}(1\!-\!\log t)^{-\alpha _{0}}f^{*}(
\sqrt{t})\log (t^{-\frac{1}{2}}),
\sup _{1<t<\infty }(1\!+\!\log t)^{-
\alpha _{\infty }}f^{*}(t^{2})\log t
\right \}
\\
&\approx &\max \left \{
\sup _{0<t<1}(1-\log t)^{1-\alpha _{0}}f^{*}(\sqrt{t}),
\sup _{1<t<\infty }(1+\log t)^{1-
\alpha _{\infty }}f^{*}(t^{2})
\right \}
\\
&\approx &\max \left \{
\sup _{0<t<1}(1-\log t)^{1-\alpha _{0}}f
^{*}(t),
\sup _{1<t<\infty }(1+\log t)^{1-\alpha _{\infty }}f^{*}(t)
\right \}
\\
&\approx &\|f\|_{L^{\infty ,\infty ; -{\mathbb{A}}+1}}.
\end{eqnarray*}
Therefore, $Y'=L^{\infty ,\infty ; -{\mathbb{A}}+1}$, and, finally,
by~\citep[Theorem~6.2]{OP}, we get $Y=L^{1,1; {\mathbb{A}}-1}$, as
desired.
\end{proof}

\section{The fractional maximal operator} \label{sec4}

\noindent
In this section we shall treat the~\textit{fractional maximal operator}
$M_{\gamma }$, defined for a fixed $\gamma \in (0,n)$ and for every
locally integrable function on $\mathbb{R}^{n}$ by
\begin{eqnarray*}
M_{\gamma }f(x)=\sup _{Q\owns x}\frac{1}{|Q|^{1-\frac{\gamma }{n}}}\int
_{Q}|f(y)|\,\mathrm{
d}y, \quad x\in \mathbb{R}^{n}.
\end{eqnarray*}

The operator $M_{\gamma }$ can be defined in the same way also for
$\gamma =0$, in which case it coincides with the Hardy--Littlewood
maximal operator, and constitutes thereby its natural generalization.
The two types of operators nevertheless have to be treated separately
because their behaviour in cases $\gamma =0$ and $\gamma >0$ is, rather
surprisingly, substantially different, and, in the fractional case, a
new approach involving a~specific supremum operator is needed for the
study of the optimal action of the operator on function spaces. Since
the supremum operator is not linear, the use of techniques based on
associate norms and spaces is somewhat limited, and a certain care has
to be exercised.

The result of~\citep[Theorem~1.1]{CKOP} shows that there exists
a~positive constant $C$ depending only on $\gamma $ and $n$ such that,
for every $\mathcal{M}(\mathbb{R}^{n})$, one has
%
\begin{eqnarray}
\label{E:upper-bound-for-fractional}
(M_{\gamma }f)^{*}(t)\leq C\sup _{t\leq s<\infty }s^{\frac{\gamma }{n}}
f^{**}(s) , \quad t\in (0,\infty ),
\end{eqnarray}
and, conversely, for every nonincreasing function $g$ on $(0,\infty )$
there exists some $f_{0}\in L^{1}_{\operatorname{loc}}(\mathbb{R}^{n})$
such that $f_{0}^{*}=g$ almost everywhere on $(0,\infty )$ and
%
\begin{eqnarray}
\label{E:lower-bound-for-fractional}
(M_{\gamma }f_{0})^{*}(t)\geq c\sup _{t\leq s<\infty }s^{\frac{\gamma
}{n}} g^{**}(s) , \quad t\in (0,\infty ),
\end{eqnarray}
where, again, $c$ is some positive constant which depends only on
$\gamma $ and $n$. For $\gamma =0$, the combination
of~\reftext{\eqref{E:lower-bound-for-fractional}}
and~\reftext{\eqref{E:upper-bound-for-fractional}} coincides with~\reftext{\eqref{E:herz}},
since the function $g^{**}$ is nonincreasing on $(0,\infty )$ for any
$g$.

\begin{theorem}
\label{T:fractional-maximal-operator}
Let $X$ be an~r.i.~space over $\mathbb{R}^{n}$. Let
$\gamma \in (0,n)$ and assume that
%
\begin{eqnarray}
\label{E:fund}
\inf _{1\leq t<\infty } \varphi _{X}(t)t^{-\frac{\gamma }{n}}>0.
\end{eqnarray}
Define the functional $\sigma $ by
%
\begin{eqnarray}
\label{E:sigma-frac}
\sigma (f)=\sup _{\atopfrac{h\sim f}{h\ge 0}}
\left \|  \int _{t}^{\infty }h(s)s^{\frac{\gamma }{n}-1}\,
\mathrm{
d}s\right \|  _{X'(0,\infty )},
\ f\in \mathcal{M}_{+}(\mathbb{R}^{n}),
\end{eqnarray}
where the supremum is taken over all $h\in \mathcal{M}_{+}(\mathbb{R}
^{n})$ equimeasurable with $f$. Then $\sigma $ is
an~r.i.~norm and
%
\begin{eqnarray}
\label{E:MG}
M_{\gamma }\colon X\to Y,
\end{eqnarray}
where $Y=Y(\sigma ')$. Moreover, $Y$ is the optimal
\textup{(}smallest\textup{)} r.i.~space for
which~\reftext{\eqref{E:MG}} holds.

Conversely, if~\reftext{\eqref{E:fund}} is not true, then there does not exist
an~r.i.~space $Y$ for which~\reftext{\eqref{E:MG}} holds.
\end{theorem}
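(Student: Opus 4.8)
The plan is to mirror the structure of the proof of Theorem~\ref{T:maximal-operator}, but to compensate for the loss of linearity caused by the supremum in \eqref{E:upper-bound-for-fractional}--\eqref{E:lower-bound-for-fractional} by building the extra supremum over equimeasurable rearrangements directly into the definition of $\sigma$. First I would verify that $\sigma$ is an r.i.\ norm. Rearrangement invariance (P6) is immediate from the definition, since the admissible class $\{h\sim f,\ h\ge0\}$ depends only on $f^{*}$. The lattice axiom (P2) and the Fatou axiom (P3) follow from monotone convergence together with the fact that the supremum over $h\sim f$ respects monotone limits (one can restrict attention to $h=f^{*}\circ\tau$ for measure-preserving $\tau$, or argue via Ryff-type representations). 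For the triangle inequality in (P1), the point is that if $h\sim f+g$ then $h^{*}=(f+g)^{*}$, and by the Hardy--Littlewood--P\'olya relation $(f+g)^{*}\prec f^{*}+g^{*}$ via \eqref{E:subadditivity-of-doublestar}; combined with the Hardy lemma applied against the nonincreasing kernel $s^{\gamma/n-1}\chi_{(t,\infty)}(s)$ this gives $\int_{t}^{\infty}h(s)s^{\gamma/n-1}\,\mathrm{d}s\le\int_{t}^{\infty}(f^{*}+g^{*})(s)s^{\gamma/n-1}\,\mathrm{d}s$, which is bounded by $\sigma(f)$-type and $\sigma(g)$-type terms after taking norms. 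For (P4) and (P5) I would argue as in Theorem~\ref{T:maximal-operator}: the nontriviality axiom (P4) reduces, after a dilation, to finiteness of $\sup_{t\le s}s^{\gamma/n}$ integrated suitably, which is exactly where hypothesis \eqref{E:fund} enters through the fundamental relation \eqref{E:fundamental-relation}; and (P5) follows by bounding $\sigma(f)$ from below by a dilate of $\|f^{*}\|_{X'(0,\infty)}$ and invoking (P5) for $X'$.

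Next I would prove the boundedness \eqref{E:MG}. By \eqref{E:upper-bound-for-fractional} it suffices to show that the operator $f\mapsto\bigl(t\mapsto\sup_{t\le s}s^{\gamma/n}f^{**}(s)\bigr)$ maps $X$ into $Y$. Writing $\|\cdot\|_{Y}=\|\cdot\|_{Y(\sigma')}$ and unwinding the definition of associate norms, the claim $\|\sup_{t\le s}s^{\gamma/n}f^{**}(s)\|_{Y(0,\infty)}\le C\|f\|_{X}$ is equivalent, by the duality of norms, to an inequality of the form $\int_{0}^{\infty}\bigl(\sup_{t\le s}s^{\gamma/n}f^{**}(s)\bigr)g(t)\,\mathrm{d}t\le C\|f\|_{X}\sigma(g)$ for all $g\in\mathcal{M}_{+}$. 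The role of the supremum over $h\sim f$ in \eqref{E:sigma-frac} is precisely to absorb the non-monotone, non-linear behaviour of the sup-operator: using the ``fall of a star'' equivalence between \eqref{T:Lenka-unrestricted} and \eqref{T:Lenka-nonincreasing} one first reduces to nonincreasing test functions, then the sup-operator can be dominated by the integral operator $h\mapsto\int_{t}^{\infty}h(s)s^{\gamma/n-1}\,\mathrm{d}s$ applied to a rearrangement of $f$, at which point the pairing with $g$ and an application of H\"older's inequality in $X'(0,\infty)$ close the estimate. Finally, \eqref{E:herz}-style transfer back from $(M_{\gamma}f)^{*}$ to $M_{\gamma}f$ via rearrangement invariance of $Y$ gives \eqref{E:MG}.

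For optimality, suppose $M_{\gamma}\colon X\to Z$ for some r.i.\ space $Z$. I would apply the lower bound \eqref{E:lower-bound-for-fractional}: for every nonincreasing $g$ there is $f_{0}\sim g$ with $(M_{\gamma}f_{0})^{*}(t)\ge c\sup_{t\le s}s^{\gamma/n}g^{**}(s)$, so $\|\sup_{t\le s}s^{\gamma/n}g^{**}(s)\|_{Z(0,\infty)}\le C\|g\|_{X(0,\infty)}$ for all nonincreasing $g$. Dualizing and then using the freedom to pick the equimeasurable copy $h$ of $f$ realizing the supremum in \eqref{E:sigma-frac}, one deduces $\|f\|_{Y'}\le C\|f\|_{Z'}$, i.e.\ $Z'\hookrightarrow Y'$, which by \eqref{E:equivalence-of-identities} is $Y\hookrightarrow Z$. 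The converse statement is handled by contradiction: if \eqref{E:fund} fails, then inserting a characteristic function $\chi_{(0,1)}$ (or a suitable test function adapted to the scale where $\varphi_{X}(t)t^{-\gamma/n}\to0$) into the lower-bound inequality produces a finite right-hand side but an infinite left-hand side, since the defining quantity of $\sigma$, and hence of any putative range norm, diverges. The main obstacle I anticipate is the boundedness step: controlling the supremum operator $\sup_{t\le s}s^{\gamma/n}f^{**}(s)$ by the linear integral operator built into $\sigma$ requires a delicate argument---one must show that the worst-case equimeasurable rearrangement $h\sim f$ in \eqref{E:sigma-frac} genuinely dominates the sup-operator's action, which is where the interplay between the Hardy--Littlewood--P\'olya principle, the ``fall of a star'', and the explicit kernel $s^{\gamma/n-1}$ must be orchestrated carefully rather than invoked routinely.
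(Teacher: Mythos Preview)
Your triangle-inequality argument has a genuine gap. You write that if $h\sim f+g$ then $h^{*}=(f+g)^{*}\prec f^{*}+g^{*}$, and then invoke Hardy's lemma against the kernel $s^{\gamma/n-1}\chi_{(t,\infty)}(s)$ to conclude
\[
\int_{t}^{\infty}h(s)s^{\gamma/n-1}\,\mathrm{d}s\le\int_{t}^{\infty}(f^{*}+g^{*})(s)s^{\gamma/n-1}\,\mathrm{d}s.
\]
This fails on two counts. First, the left side involves $h$, not $h^{*}$, and $h$ is an \emph{arbitrary} equimeasurable copy of $f+g$; the whole point of the supremum in~\eqref{E:sigma-frac} is that shifting the mass of $h$ far to the right can make $\int_{t}^{\infty}h(s)s^{\gamma/n-1}\,\mathrm{d}s$ large on a long interval of $t$'s where $f^{*}+g^{*}$ has already vanished, so the pointwise bound you assert is simply false. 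Second, the kernel $s\mapsto s^{\gamma/n-1}\chi_{(t,\infty)}(s)$ is not nonincreasing on $(0,\infty)$, so Hardy's lemma does not apply. The paper handles (P1) by the measure-preserving--transformation argument of~\cite[Theorem~3.3]{T2}: if $h\sim f+g$ with $h\ge0$, write $h=(|f|+|g|)\circ\tau$ for a measure-preserving $\tau$, so that $h=|f|\circ\tau+|g|\circ\tau$ with $|f|\circ\tau\sim f$ and $|g|\circ\tau\sim g$; subadditivity then follows from the triangle inequality in $X'(0,\infty)$ together with the definition of $\sigma$.

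Your verification of (P4) is also too optimistic. Since $\sigma(\chi_{E})$ is a supremum over \emph{all} $h=\chi_{F}$ with $|F|=|E|$, a dilation argument alone cannot close it: one must bound $\bigl\|\int_{t}^{\infty}\chi_{F}(s)s^{\gamma/n-1}\,\mathrm{d}s\bigr\|_{X'(0,\infty)}$ uniformly over all placements of $F$. The paper does this by covering $F$ by an open set of at most twice the measure, decomposing into intervals $(a_{k},b_{k})$, and invoking the reformulation~\eqref{E:fund}$\Leftrightarrow r^{\gamma/n-1}\varphi_{X'}(r)\le C$ on each piece; this is where~\eqref{E:fund} is actually used. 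For the boundedness step you correctly flag the difficulty but misidentify its resolution: the supremum in~\eqref{E:sigma-frac} is what gives $\|\int_{t}^{\infty}g(s)s^{\gamma/n-1}\,\mathrm{d}s\|_{X'(0,\infty)}\le\|g\|_{Y'(0,\infty)}$ for \emph{all} (not just nonincreasing) $g$ without any ``fall of a star'', after which duality and restriction yield $\|t^{\gamma/n}g^{**}(t)\|_{Y(0,\infty)}\le\|g\|_{X(0,\infty)}$; the passage from this to $\|\sup_{t\le s}s^{\gamma/n}g^{**}(s)\|_{Y(0,\infty)}$ is a separate, nontrivial inequality (Lemma~\ref{L:unsup} in the paper, valid for any quasiconcave weight and any r.i.\ space), not a consequence of the equimeasurable supremum. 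Your optimality and converse sketches are essentially correct and match the paper.
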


The expression for the functional $\sigma $ in
\reftext{Theorem~\ref{T:fractional-maximal-operator}} is somewhat implicit. Our
next result however shows that it can be considerably simplified at a
relatively low cost. We shall need a \textit{supremum operator}. For a
fixed $\alpha \geq 0$, define the operator $T_{\alpha }$ on
$\mathcal{M}(0,\infty )$ by
\begin{eqnarray*}
T_{\alpha }f(t)= t^{-\alpha }\sup _{t\leq s<\infty }s^{\alpha }f^{*}(s),
\ t\in (0,\infty ).
\end{eqnarray*}

\begin{theorem}
\label{T:fractional-corollary}
Let $0<\gamma <n$ and let $X$ be an~r.i.~space over
$\mathbb{R}^{n}$. Assume that
%
\begin{eqnarray}
\label{E:bundedness-of-T}
T_{\frac{\gamma }{n}}\colon X(0,\infty )\to X(0,\infty ).
\end{eqnarray}
Define the functional $\tau $ by
\begin{eqnarray*}
\tau (f)=\sup _{\|h\|_{X(0,\infty )}\leq 1}\int _{0}^{\infty }f^{*}(s)(PT
_{\frac{\gamma }{n}}h)(s)s^{\frac{\gamma }{n}}\,\mathrm{
d}s.
\end{eqnarray*}
Then $\tau $ is an~r.i.~norm such that
\begin{eqnarray*}
M_{\gamma }\colon X\to Y,
\end{eqnarray*}
where $Y=Y(\tau ')$, and $Y$ is the optimal \textup{(}smallest\textup{)}
r.i.~space for which~\reftext{\eqref{E:MG}} holds. Moreover,
$\tau $ is equivalent to the functional
%
\begin{eqnarray}
\label{E:sigma-frac-corollary}
f\mapsto \left \|  \int _{t}^{\infty }f^{*}(s)s^{\frac{\gamma }{n}-1}
\,\mathrm{
d}s\right \|  _{X'(0,\infty )}, \quad f\in \mathcal{M}_{+}(\mathbb{R}^{n}).
\end{eqnarray}
\end{theorem}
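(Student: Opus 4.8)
The plan is to prove Theorem~\ref{T:fractional-corollary} by reducing it, wherever possible, to Theorem~\ref{T:fractional-maximal-operator}, whose conclusions we are entitled to use. The central tasks are: (i) to show that under the hypothesis \eqref{E:bundedness-of-T} the three functionals
\[
\sigma(f)=\sup_{\atopfrac{h\sim f}{h\ge 0}}\left\|\int_{t}^{\infty}h(s)s^{\frac{\gamma}{n}-1}\,\mathrm{d}s\right\|_{X'(0,\infty)},\qquad
\tau(f),\qquad\text{and}\qquad
f\mapsto\left\|\int_{t}^{\infty}f^{*}(s)s^{\frac{\gamma}{n}-1}\,\mathrm{d}s\right\|_{X'(0,\infty)}
\]
are pairwise equivalent; (ii) to deduce from this that \eqref{E:bundedness-of-T} implies the fundamental-function condition \eqref{E:fund}, so that Theorem~\ref{T:fractional-maximal-operator} applies and yields both $M_{\gamma}\colon X\to Y$ and the optimality of $Y=Y(\tau')$; and (iii) to check directly that $\tau$ is an r.i.~norm (the representation with $\|h\|_{X}\le 1$ makes most axioms routine).

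First I would establish the easy direction of (i): for every $f$ one trivially has $\sigma(f)\ge\bigl\|\int_t^\infty f^*(s)s^{\gamma/n-1}\,\mathrm{d}s\bigr\|_{X'(0,\infty)}$, since $f$ itself is admissible in the supremum defining $\sigma$. For the reverse inequality, fix $h\sim f$, $h\ge 0$; by the Hardy--Littlewood inequality \eqref{E:HL}, applied with the nonincreasing weight $s^{\gamma/n-1}\chi_{(t,\infty)}(s)$, we get $\int_t^\infty h(s)s^{\gamma/n-1}\,\mathrm{d}s\le\int_t^\infty h^*(s)s^{\gamma/n-1}\,\mathrm{d}s=\int_t^\infty f^*(s)s^{\gamma/n-1}\,\mathrm{d}s$, whence by the lattice axiom (P2) for $X'$ the norm of the left side is dominated by the norm of the right side, uniformly in $h$. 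Taking the supremum over $h$ gives $\sigma(f)\le\bigl\|\int_t^\infty f^*(s)s^{\gamma/n-1}\,\mathrm{d}s\bigr\|_{X'(0,\infty)}$. Hence $\sigma$ and the functional in \eqref{E:sigma-frac-corollary} coincide. The point of the hypothesis \eqref{E:bundedness-of-T} is precisely to make this innocent-looking chain possible to combine with the description of $\tau$: this is the step I expect to carry the weight of the argument.

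Next I would identify $\tau$ with \eqref{E:sigma-frac-corollary}. Unravelling the definition, $\tau(f)=\sup_{\|h\|_X\le 1}\int_0^\infty f^*(s)\bigl(P T_{\gamma/n}h\bigr)(s)\,s^{\gamma/n}\,\mathrm{d}s$. Using the formal adjointness of $P$ and $Q$ \eqref{E:equivalence-of-P-Q} (more precisely the $L^1$-pairing identity preceding it) one rewrites the inner integral as $\int_0^\infty \bigl(Q(f^*(\cdot)\,(\cdot)^{\gamma/n})\bigr)(s)\,(T_{\gamma/n}h)(s)\,\mathrm{d}s$, and since $Q(f^*(\cdot)(\cdot)^{\gamma/n})(t)=\int_t^\infty f^*(s)s^{\gamma/n-1}\,\mathrm{d}s$ is nonincreasing, by \eqref{E:corollary-of-HL} the supremum over $\|h\|_X\le 1$ of $\int (T_{\gamma/n}h)\cdot(\text{nonincreasing})$ can be reduced to a supremum over nonincreasing $h$; on the cone of nonincreasing functions the boundedness \eqref{E:bundedness-of-T} of $T_{\gamma/n}$ on $X(0,\infty)$ gives $\|T_{\gamma/n}h\|_X\approx\|h\|_X$ (the lower bound is trivial since $T_{\gamma/n}h\ge h$ pointwise for nonincreasing $h$, as the choice $s=t$ in the supremum shows), so that the supremum over $\|h\|_X\le 1$ of $\int (T_{\gamma/n}h)\cdot\bigl(Q(f^*(\cdot)(\cdot)^{\gamma/n})\bigr)$ is comparable to the supremum of $\int g\cdot\bigl(Q(f^*(\cdot)(\cdot)^{\gamma/n})\bigr)$ over $\|g\|_X\le 1$, which is exactly $\bigl\|\int_t^\infty f^*(s)s^{\gamma/n-1}\,\mathrm{d}s\bigr\|_{X'(0,\infty)}$. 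Thus $\tau\approx\sigma$, and in particular $\tau$ is equivalent to an r.i.~norm; combined with the direct verification of (P1)--(P6) for $\tau$ (the sublinearity in (P1) follows from \eqref{E:subadditivity-of-doublestar} and the Hardy lemma in the same way as for $\sigma$ in the proof of Theorem~\ref{T:maximal-operator}; (P4) and (P5) follow from the corresponding test-function estimates, using that the representing integral dominates $\log 2\cdot\|f^*(2\cdot)\,(\cdot)^{\gamma/n}\|_{X'}$), this shows $\tau$ is an r.i.~norm.

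Finally, for (ii), I must verify that \eqref{E:bundedness-of-T} implies \eqref{E:fund}; this is what licenses applying Theorem~\ref{T:fractional-maximal-operator}. Testing the boundedness of $T_{\gamma/n}$ on $X(0,\infty)$ against $h=\chi_{(0,1)}$ gives $T_{\gamma/n}\chi_{(0,1)}(t)=t^{-\gamma/n}\sup_{t\le s<\infty}s^{\gamma/n}\chi_{(0,1)}(s)$, which for $t\ge 1$ equals $t^{-\gamma/n}$ and for $t<1$ equals $t^{-\gamma/n}\cdot 1$; so $\|t^{-\gamma/n}\chi_{[1,\infty)}\|_X\le\|T_{\gamma/n}\chi_{(0,1)}\|_X\le C\|\chi_{(0,1)}\|_X=C\varphi_X(1)<\infty$, and by the definition of the fundamental function and the lattice axiom this forces $\inf_{t\ge 1}\varphi_X(t)t^{-\gamma/n}>0$ (a function bounded below on $[1,\infty)$ by $t^{-\gamma/n}$ up to a dilation must have associate fundamental function controlled accordingly; concretely one uses $\varphi_X(t)=\|\chi_{(0,t)}\|_X\ge\|D_t(t^{-\gamma/n}\chi_{[1,\infty)})\|_X^{-1}\cdot(\dots)$ via \eqref{E:fundamental-relation}, turning the upper bound on $\|t^{-\gamma/n}\chi_{[1,\infty)}\|_X$ into the required lower bound on $\varphi_X$). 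With \eqref{E:fund} in hand, Theorem~\ref{T:fractional-maximal-operator} gives $M_{\gamma}\colon X\to Y(\sigma')$ and the optimality of $Y(\sigma')$; since $\tau\approx\sigma$ we have $Y(\tau')=Y(\sigma')$ with equivalent norms, so $M_{\gamma}\colon X\to Y$ with $Y=Y(\tau')$ the optimal range space. The main obstacle, as noted, is the reduction-to-nonincreasing-functions step, where one must legitimately pass the supremum operator $T_{\gamma/n}$ through the duality, and it is there that the hypothesis \eqref{E:bundedness-of-T}—rather than the weaker \eqref{E:fund}—is genuinely used.
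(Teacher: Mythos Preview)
Your plan to reduce everything to Theorem~\ref{T:fractional-maximal-operator} via the chain $\sigma\approx\tau\approx\eqref{E:sigma-frac-corollary}$ breaks at the very first link. The inequality you claim,
\[
\int_t^\infty h(s)\,s^{\gamma/n-1}\,\mathrm{d}s\le\int_t^\infty f^*(s)\,s^{\gamma/n-1}\,\mathrm{d}s\qquad(h\sim f,\ t>0),
\]
does \emph{not} follow from Hardy--Littlewood: the weight $s\mapsto s^{\gamma/n-1}\chi_{(t,\infty)}(s)$ is not nonincreasing on $(0,\infty)$ (it vanishes on $(0,t)$), and its rearrangement is $(s+t)^{\gamma/n-1}$, not itself. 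For a counterexample take $f^*=\chi_{(0,1)}$, $h=\chi_{(1/2,3/2)}$, $t=1/2$: the left side is $\int_{1/2}^{3/2}s^{\gamma/n-1}\,\mathrm{d}s$, strictly larger than the right side $\int_{1/2}^{1}s^{\gamma/n-1}\,\mathrm{d}s$. Worse, your argument never uses \eqref{E:bundedness-of-T}, so it would yield $\sigma=\eqref{E:sigma-frac-corollary}$ unconditionally; but Theorem~\ref{T:lenka} says this equivalence holds \emph{if and only if} $T_{\gamma/n}$ is bounded on $X(0,\infty)$, and its proof is far from a one-line rearrangement estimate. A smaller slip: in (ii) you have $T_{\gamma/n}\chi_{(0,1)}(t)=0$ for $t\ge1$, not $t^{-\gamma/n}$; the right test functions are $\chi_{(0,a)}$ with $a\ge1$ (see Remark~\ref{R:comparison}).

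The paper avoids $\sigma$ altogether. It first proves $\tau\approx\eqref{E:sigma-frac-corollary}$ directly (your sketch of this part is essentially right: Fubini gives $\tau(f)=\sup_{\|h\|_X\le1}\int(T_{\gamma/n}h)(t)\int_t^\infty f^*(s)s^{\gamma/n-1}\,\mathrm{d}s\,\mathrm{d}t$; the upper bound is H\"older plus \eqref{E:bundedness-of-T}, the lower uses $T_{\gamma/n}h\ge h^*$ and \eqref{E:corollary-of-HL}). It then checks (P1)--(P6) for $\tau$ directly, using that $s\mapsto s^{\gamma/n}(PT_{\gamma/n}h)(s)$ is nonincreasing. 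Finally it proves $M_\gamma\colon X\to Y(\tau')$ and optimality by \emph{rerunning} the argument of Theorem~\ref{T:fractional-maximal-operator}---the equivalence of \eqref{T:Lenka-unrestricted} and \eqref{T:Lenka-nonincreasing}, duality, Lemma~\ref{L:unsup}, and \eqref{E:upper-bound-for-fractional}---rather than by invoking that theorem as a black box. Your shortcut through $\sigma$ would indeed be more elegant if it worked, but the step ``$\sigma\le\eqref{E:sigma-frac-corollary}$'' is precisely the nontrivial content of Theorem~\ref{T:lenka}, whose proof in the paper \emph{uses} Theorem~\ref{T:fractional-corollary}; so this route is circular.
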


\begin{remark}
\label{R:comparison}
The assumption~\reftext{\eqref{E:bundedness-of-T}} of
\reftext{Theorem~\ref{T:fractional-corollary}} is natural in view of the fact that
the classical endpoint mapping properties for the fractional maximal
operator $M_{\frac{\gamma }{n}}$ are of the form
\begin{eqnarray*}
M_{\frac{\gamma }{n}}\colon L^{1}\to L^{\frac{n}{n-\gamma },\infty }
\qquad
\textup{and}
\qquad
M_{\frac{\gamma }{n}}\colon L^{\frac{n}{\gamma },\infty }\to L^{
\infty },
\end{eqnarray*}
while those of $T_{{\frac{\gamma }{n}}}$ are
(cf.~\cite{T2,T3,GOP})
\begin{eqnarray*}
T_{{\frac{\gamma }{n}}}\colon L^{1}\to L^{1}
\qquad
\textup{and}
\qquad
T_{{\frac{\gamma }{n}}}\colon L^{\frac{n}{\gamma },\infty }\to L^{\frac{n}{
\gamma },\infty }.
\end{eqnarray*}
On the other hand,~\reftext{\eqref{E:bundedness-of-T}} is
\textit{strictly stronger} than~\reftext{\eqref{E:fund}}. Indeed, assume
that~\reftext{\eqref{E:bundedness-of-T}} is satisfied. Then, in particular, there
exists a~positive constant, $K$, such that for every $a\geq 1$ one has
\begin{eqnarray*}
\|T_{\frac{\gamma }{n}}\chi _{(0,a)}\|_{X(0,\infty )}
\leq K
\|\chi
_{(0,a)}\|_{X(0,\infty )}.
\end{eqnarray*}
Since
\begin{eqnarray*}
T_{\frac{\gamma }{n}}\chi _{(0,a)}(t)
=
\chi _{(0,a)}(t)a^{\frac{\gamma
}{n}}t^{-\frac{\gamma }{n}}\quad \textup{for}\ t\in (0,\infty ),
\end{eqnarray*}
we in fact have
\begin{eqnarray*}
\|\chi _{(0,a)}(t)a^{\frac{\gamma }{n}}t^{-\frac{\gamma }{n}}\|_{X(0,
\infty )} \leq K \varphi _{X}(a).
\end{eqnarray*}
Consequently,
\begin{eqnarray*}
\varphi _{X}(a)a^{-\frac{\gamma }{n}}
\geq K^{-1}\|\chi _{(0,a)}(t)t
^{-\frac{\gamma }{n}}\|_{X(0,\infty )}
\geq K^{-1}\|\chi _{(0,1)}(t)t
^{-\frac{\gamma }{n}}\|_{X(0,\infty )}.
\end{eqnarray*}
Hence
\begin{eqnarray*}
\inf _{1\leq a<\infty }\varphi _{X}(a)a^{-\frac{\gamma }{n}}\geq K^{-1}
\|\chi _{(0,1)}(t)t^{-\frac{\gamma }{n}}\|_{X(0,\infty )}>0,
\end{eqnarray*}
and~\reftext{\eqref{E:fund}} follows. This shows the implication \reftext{\eqref{E:bundedness-of-T}}$\Rightarrow $\reftext{\eqref{E:fund}}. The fact that
this implication cannot be reversed follows on considering $X=L^{\frac{n}{
\gamma },q}$ with $q\in [1,\infty )$. Every such space obviously
satisfies~\reftext{\eqref{E:fund}}, but it follows from~\citep[Theorem~3.2]{GOP}
that the operator $T_{{\frac{\gamma }{n}}}$ is not bounded on it,
hence~\reftext{\eqref{E:bundedness-of-T}} does not hold.
\end{remark}

For the optimal domain for the fractional maximal operator, we have the
following result. Its proof is analogous to that of
\reftext{Theorem~\ref{T:maximal-operator-domain}} and therefore is omitted.

\begin{theorem}
Let $0<\gamma <n$ and let $Y$ be an~r.i.~space over
$\mathbb{R}^{n}$ such that
%
\begin{eqnarray}
\label{E:fmo-domain-condition}
\psi \in Y(0,\infty ),
\end{eqnarray}
where $\psi (t)=(1+t)^{\frac{\gamma }{n}-1}$, $t\in (0,\infty )$. Define
the functional $\sigma $ by
\begin{eqnarray*}
\sigma (f)=\left \|  t^{\frac{\gamma }{n}}f^{**}(t) \right \|  _{Y(0,
\infty )}, \quad f\in \mathcal{M}_{+}(\mathbb{R}^{n}).
\end{eqnarray*}
Then $\sigma $ is an~r.i.~norm and
%
\begin{eqnarray}
\label{E:fractional-bounded-domain}
M_{\gamma }\colon X\to Y,
\end{eqnarray}
where $X=X(\sigma )$. Moreover, $X$ is the optimal
\textup{(}largest\textup{)} r.i.~space for
which~\reftext{\eqref{E:fractional-bounded-domain}} holds.

Conversely, if~\reftext{\eqref{E:fmo-domain-condition}} is not true, then there
does not exist an~r.i.~space $X$ for
which~\reftext{\eqref{E:fractional-bounded-domain}} holds.
\end{theorem}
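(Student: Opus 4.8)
The plan is to imitate, mutatis mutandis, the proof of Theorem~\ref{T:maximal-operator-domain}, using the two-sided bound~\eqref{E:upper-bound-for-fractional}--\eqref{E:lower-bound-for-fractional} in place of~\eqref{E:herz}. The central structural fact is that the operator
\begin{eqnarray*}
f\mapsto \bigl(t\mapsto \sup_{t\le s<\infty}s^{\frac{\gamma}{n}}f^{**}(s)\bigr)
\end{eqnarray*}
is, up to the multiplicative constants in~\eqref{E:upper-bound-for-fractional}--\eqref{E:lower-bound-for-fractional}, equivalent on nonincreasing inputs to the simpler expression $t\mapsto t^{\frac{\gamma}{n}}f^{**}(t)$. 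Indeed, $t^{\frac{\gamma}{n}}f^{**}(t)$ is clearly dominated by the supremum, and conversely, since $s\mapsto s f^{**}(s)=\int_0^s f^*$ is nondecreasing, for $s\ge t$ one has $s^{\frac{\gamma}{n}}f^{**}(s)=s^{\frac{\gamma}{n}-1}\int_0^s f^*\le s^{\frac{\gamma}{n}-1}\int_0^{s}f^*$, which is not monotone, so one must split: writing $s^{\frac{\gamma}{n}}f^{**}(s)=s^{\frac{\gamma}{n}-1}\cdot sf^{**}(s)$ and noting $sf^{**}(s)\le 2\int_0^{s}f^* \le 2 \cdot s f^{**}(s)$ does not immediately help, so instead I would observe that for $s\in[t,2t]$ one has $s^{\frac{\gamma}{n}}f^{**}(s)\le (2t)^{\frac{\gamma}{n}}f^{**}(s)\le 2^{\frac{\gamma}{n}}t^{\frac{\gamma}{n}}f^{**}(t)\cdot(2)$ using $f^{**}$ nonincreasing and $sf^{**}(s)$ nondecreasing, and for $s\ge 2t$ iterate geometrically, summing a convergent series in the r.i.\ norm via boundedness of the dilation operator. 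This is the computational heart of the reduction and the step I expect to require the most care. (Alternatively one may simply quote the equivalence already implicit in~\citep{CKOP} or in the discussion preceding Theorem~\ref{T:fractional-maximal-operator}.)

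With that equivalence in hand, the proof proceeds exactly as for the Hardy--Littlewood case. First I would check that $\sigma$ is an r.i.\ norm: (P1), (P2), (P3), (P6) are immediate from the corresponding properties of $Y(0,\infty)$ together with the subadditivity~\eqref{E:subadditivity-of-doublestar-a} of $f\mapsto f^{**}$ (for the triangle inequality) and the Monotone Convergence Theorem (for Fatou). For (P4), boundedness of the dilation operator on $Y(0,\infty)$ reduces finiteness of $\sigma(\chi_E)$ for $|E|<\infty$ to $\chi_{(0,1)}^{**}(t)t^{\frac{\gamma}{n}}\in Y(0,\infty)$; since $\chi_{(0,1)}^{**}(t)=\min\{1,1/t\}$, the function $t^{\frac{\gamma}{n}}\min\{1,1/t\}$ is comparable to $(1+t)^{\frac{\gamma}{n}-1}=\psi(t)$ near zero and near infinity, so (P4) holds precisely under the hypothesis~\eqref{E:fmo-domain-condition}. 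For (P5), the chain of inequalities
\begin{eqnarray*}
\sigma(g)\ge \bigl\|t^{\frac{\gamma}{n}}g^{**}(t)\chi_{(0,|E|)}(t)\bigr\|_{Y(0,\infty)}
\ge |E|^{\frac{\gamma}{n}}g^{**}(|E|)\,\|\chi_{(0,|E|)}\|_{Y(0,\infty)}
\ge \frac{|E|^{\frac{\gamma}{n}}}{|E|}\|\chi_{(0,|E|)}\|_{Y(0,\infty)}\int_E g,
\end{eqnarray*}
using monotonicity of $g^{**}$ and the Hardy--Littlewood inequality~\eqref{E:HL}, gives the local $L^1$ embedding with $C_E = |E|^{1-\frac{\gamma}{n}}\|\chi_{(0,|E|)}\|_{Y(0,\infty)}^{-1}$.

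Next, the boundedness~\eqref{E:fractional-bounded-domain}: by~\eqref{E:upper-bound-for-fractional} and the reduction of the first paragraph, $(M_\gamma f)^*(t)\le C\sup_{t\le s}s^{\frac{\gamma}{n}}f^{**}(s)$, and taking $Y(0,\infty)$-norms and invoking monotonicity (so that the supremum is controlled in norm by $t^{\frac{\gamma}{n}}f^{**}(t)$ up to a geometric-series constant, or directly by the equivalence) we get $\|M_\gamma f\|_Y=\|(M_\gamma f)^*\|_{Y(0,\infty)}\le C'\|t^{\frac{\gamma}{n}}f^{**}(t)\|_{Y(0,\infty)}=C'\sigma(f)=C'\|f\|_X$. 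For optimality of $X$: suppose $M_\gamma\colon Z\to Y$ for some r.i.\ space $Z$. Given any nonincreasing $g$ on $(0,\infty)$, pick $f_0\in L^1_{\mathrm{loc}}$ with $f_0^*=g$ a.e.\ and $(M_\gamma f_0)^*(t)\ge c\sup_{t\le s}s^{\frac{\gamma}{n}}g^{**}(s)\ge c\,t^{\frac{\gamma}{n}}g^{**}(t)$ by~\eqref{E:lower-bound-for-fractional}; then $c\,\|t^{\frac{\gamma}{n}}g^{**}(t)\|_{Y(0,\infty)}\le \|(M_\gamma f_0)^*\|_{Y(0,\infty)}=\|M_\gamma f_0\|_Y\le C\|f_0\|_Z=C\|g\|_{Z(0,\infty)}$, i.e.\ $\sigma(g)\le (C/c)\|g\|_{Z(0,\infty)}$ for all nonincreasing $g$, hence $\|h\|_X\le (C/c)\|h\|_Z$ for all $h$ by rearrangement invariance, which is the embedding $Z\hookrightarrow X$. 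Finally, if~\eqref{E:fmo-domain-condition} fails and $M_\gamma\colon X\to Y$ held for some r.i.\ $X$, then testing on $\chi_{(0,1)}$ (using~\eqref{E:lower-bound-for-fractional} to produce $f_0$ with $f_0^*=\chi_{(0,1)}$ and $(M_\gamma f_0)^*(t)\ge c\,t^{\frac{\gamma}{n}}\chi_{(0,1)}^{**}(t)$) would force $\|\psi\|_{Y(0,\infty)}\approx\|t^{\frac{\gamma}{n}}\chi_{(0,1)}^{**}(t)\|_{Y(0,\infty)}<\infty$, a contradiction. The proof is complete.
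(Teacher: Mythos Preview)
Your approach matches the paper's intended argument (the proof there is omitted as analogous to Theorem~\ref{T:maximal-operator-domain}). Two technical points need fixing. First, the norm equivalence you struggle with in the opening paragraph is exactly Lemma~\ref{L:unsup} with $\varphi(t)=t^{\gamma/n}$ applied to the nonincreasing function $f^{**}$; cite it rather than attempt the ad hoc geometric-series argument, which as written does not go through (the pointwise bound you try to establish is simply false, so only a norm inequality of the Lemma~\ref{L:unsup} type can work). Second, your (P5) chain is incorrect: on $(0,|E|)$ one has $t^{\gamma/n}\le |E|^{\gamma/n}$, not $\ge$, so the middle inequality fails. The repair is to restrict instead to $(|E|/2,|E|)$, where both $t^{\gamma/n}\ge(|E|/2)^{\gamma/n}$ and $g^{**}(t)\ge g^{**}(|E|)$, and use $\|\chi_{(|E|/2,|E|)}\|_{Y(0,\infty)}$ as the constant. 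A minor further imprecision: in (P4), the function $t^{\gamma/n}\min\{1,1/t\}$ is not comparable to $\psi$ near zero (it tends to $0$ while $\psi$ tends to $1$), but it \emph{is} dominated by a constant multiple of $\psi$ everywhere, which is all that is needed. With these corrections, the proof is complete.
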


Our next aim is to present an array of results concerning the optimal
range partners for Lorentz-Zygmund spaces of the form
$L^{p,q;{\mathbb{A}}}$ with respect to $M_{\gamma }$. Mapping properties
of $M_{\gamma }$ on Lorentz--Zygmund spaces were studied in~\citep{EO},
where the following results were established:
\begin{eqnarray*}
M_{\gamma }\colon L^{p,q; {\mathbb{A}}} \to
\left\{
\begin{array}{l@{\quad }l}
L^{{\frac{n}{n-\gamma }}, 1;{\mathbb{A}}-1},
& p=1, q=1,
\alpha _{0}\ge 0, \alpha _{\infty }< 0, \\
L^{{\frac{n}{n-\gamma }}, \infty ; {\mathbb{A}}},
& p=1,
q=1, \alpha _{0} \geq 0, \alpha _{\infty }\leq 0, \\
L^{\frac{np}{n-\gamma p},q;{\mathbb{A}}},
& 1<p<\tfrac{n}{
\gamma }, 1\le q \le \infty , \\
L^{\infty ,q;{\mathbb{A}}- \frac{1}{q}},
& p=\tfrac{n}{\gamma
}, 1\le q \leq \infty , \alpha _{0} < 0, \alpha _{\infty }> 0.
\end{array}\right.
\end{eqnarray*}

Our result concerning optimal range spaces for Lorentz--Zygmund spaces
reads as follows.

\begin{theorem}
\label{T:fractional-maximal-operator-GLZ}
Let $\gamma \in (0,n)$, $p,q\in [1,\infty ]$, ${\mathbb{A}}\in
\mathbb{R}^{2}$. Then
\begin{eqnarray}
M_{\gamma }\colon L^{p,q; {\mathbb{A}}} \to
\left\{
\begin{array}{l@{\quad }l@{\quad }l}
Y_{1},
& p = 1, q = 1, \alpha _{0}\geq 0, \alpha _{\infty }\leq 0, &\mathrm{(a)} \label{E:fractional_p1} \\
L^{\frac{np}{n-\gamma p}, q; {\mathbb{A}}}, & 1<p<\tfrac{n}{\gamma }, & \mathrm{(b)} \label{E:fractional_easy} \\
L^{\infty , \infty ; {\mathbb{A}}}, & p = \tfrac{n}{\gamma }, q=\infty , \alpha _{0}\leq 0, \alpha _{\infty }\geq 0, &\mathrm{(c)}
\label{E:fractional_infty1}
\\
Y_{2},
& p = \tfrac{n}{\gamma }, 1\le q < \infty , \alpha _{\infty
}\ge 0 ~ or & \mathrm{(d)}
\\
& p = \tfrac{n}{\gamma }, q =\infty , \alpha _{0} > 0,
\alpha _{\infty }\ge 0, & \mathrm{(e)}
\label{E:fractional_infty2}
\end{array}\right.
\end{eqnarray}
where $Y_{1}$ and $Y_{2}$ are the \textup{(}unique\textup{)}
r.i.~spaces whose associate spaces, $Y_{1}'$ and
$Y_{2}'$, satisfy
\begin{eqnarray*}
\|f\|_{Y_{1}'}
= \sup _{0<t<\infty } \ell ^{-{\mathbb{A}}}(t)
\int _{t}
^{\infty }f^{*}(s)s^{\frac{\gamma }{n}-1}\,\mathrm{
d}s,
\quad f \in \mathcal{M}_{+}(\mathbb{R}^{n}),
\end{eqnarray*}
and
\begin{eqnarray*}
\|f\|_{Y_{2}'}
= \sup _{\atopfrac{h\sim f}{h\ge 0}} \Bigl \|
t^{1-{\frac{\gamma }{n}}-\frac{1}{q'}}
\ell ^{-{\mathbb{A}}}(t)
\int _{t}^{\infty } h(s)\,s^{{\frac{\gamma }{n}}-1}
\,\mathrm{
d}s
\Bigr \|_{L^{q'}(0,\infty )},
\quad f \in \mathcal{M}_{+}(\mathbb{R}
^{n}),
\end{eqnarray*}
respectively. In particular, in the case ${\mathbb{A}}=[0,0]$, we have
$Y_{1}=L^{\frac{n}{n-\gamma },\infty }$ and $Y_{2}=L^{\infty }$.

Moreover, these spaces are the optimal range partners with respect to~$M
_{\gamma }$.
\end{theorem}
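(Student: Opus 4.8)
The plan is to deduce every assertion --- boundedness and optimality alike --- from the abstract Theorems~\ref{T:fractional-maximal-operator} and~\ref{T:fractional-corollary}. Once the appropriate hypothesis (\eqref{E:fund} for the former, \eqref{E:bundedness-of-T} for the latter) has been verified for the space $X=L^{p,q;{\mathbb A}}$ at hand, those theorems hand us $M_{\gamma}\colon X\to Y$, the optimality of $Y$, and an explicit formula for $\|\cdot\|_{Y'}$; the work then splits into (i) checking hypotheses and (ii) recognising the abstractly defined $Y$ (equivalently $Y'$) as the space listed. The two recurring preliminaries are the fundamental-function estimate $\varphi_{L^{p,q;{\mathbb A}}}(t)\approx t^{1/p}\ell^{{\mathbb A}}(t)$ (a Karamata-type computation), which turns \eqref{E:fund} into the requirement that $p<n/\gamma$, or $p=n/\gamma$ with $\alpha_{\infty}\ge 0$ --- precisely the range covered by cases~(a)--(e) --- and the duality $(L^{p,q;{\mathbb A}})'=L^{p',q';-{\mathbb A}}$ from \cite{OP}. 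I also note that, under the sign conditions imposed in case~(a), $\ell^{{\mathbb A}}$ is nonincreasing, so $L^{1,1;{\mathbb A}}$ is a genuine r.i.~space by \eqref{E:subadditivity-of-doublestar} and the Hardy lemma.

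For cases~(b) and~(c), where the target is itself a Lorentz--Zygmund space, I would apply the sharper Theorem~\ref{T:fractional-corollary}, so first \eqref{E:bundedness-of-T} must be established. For $1<p<n/\gamma$ this is the known boundedness of the supremum operator $T_{\gamma/n}$ on $L^{p,q;{\mathbb A}}$ (here $\gamma/n<1/p$, so $t^{\gamma/n}$ is a \emph{subcritical} power; see~\cite{GOP}); for $p=n/\gamma$, $q=\infty$, $\alpha_{0}\le 0\le\alpha_{\infty}$ one argues directly, since the definition of the norm gives $f^{*}(s)\le\|f\|_{X}\,s^{-\gamma/n}\ell^{-{\mathbb A}}(s)$ and, under these sign conditions, $\ell^{-{\mathbb A}}$ is equivalent to a nonincreasing function, whence $T_{\gamma/n}f$ is dominated pointwise by the nonincreasing majorant $\|f\|_{X}\,t^{-\gamma/n}\ell^{-{\mathbb A}}(t)$ of comparable $X$-norm. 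Theorem~\ref{T:fractional-corollary} then identifies $\|f\|_{Y'}$ with the functional \eqref{E:sigma-frac-corollary}, i.e.\ $\bigl\|\int_{t}^{\infty}f^{*}(s)s^{\gamma/n-1}\,{\mathrm d}s\bigr\|_{X'(0,\infty)}$, which remains to be computed. In case~(b), with $X'=L^{p',q';-{\mathbb A}}$, the key is a weighted Hardy inequality for the conjugate operator restricted to nonincreasing functions, $\bigl\|\int_{t}^{\infty}f^{*}(s)s^{\gamma/n-1}\,{\mathrm d}s\bigr\|_{L^{p',q';-{\mathbb A}}}\approx\|t^{\gamma/n}f^{*}(t)\|_{L^{p',q';-{\mathbb A}}}=\|f\|_{L^{r',q';-{\mathbb A}}}$ with $1/r'=1/p'+\gamma/n$; since $1<p<n/\gamma$ forces $1<r'<\infty$ the inequality is available, and as $r'=(np/(n-\gamma p))'$ this gives $Y'=(L^{np/(n-\gamma p),q;{\mathbb A}})'$, hence $Y=L^{np/(n-\gamma p),q;{\mathbb A}}$. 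In case~(c), $X'=L^{n/(n-\gamma),1;-{\mathbb A}}$, whose norm on a nonincreasing $g$ is $\int_{0}^{\infty}s^{-\gamma/n}\ell^{-{\mathbb A}}(s)g(s)\,{\mathrm d}s$; applying this to $g(t)=\int_{t}^{\infty}f^{*}(s)s^{\gamma/n-1}\,{\mathrm d}s$ and interchanging the order of integration leaves $\int_{0}^{\infty}f^{*}(r)r^{\gamma/n-1}\bigl(\int_{0}^{r}s^{-\gamma/n}\ell^{-{\mathbb A}}(s)\,{\mathrm d}s\bigr)\,{\mathrm d}r$, and since $\gamma<n$ and $\ell^{-{\mathbb A}}$ is slowly varying, $\int_{0}^{r}s^{-\gamma/n}\ell^{-{\mathbb A}}(s)\,{\mathrm d}s\approx r^{1-\gamma/n}\ell^{-{\mathbb A}}(r)$, so $\|f\|_{Y'}\approx\|f\|_{L^{1,1;-{\mathbb A}}}$ and $Y=L^{\infty,\infty;{\mathbb A}}$.

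Case~(a) I would handle from Theorem~\ref{T:fractional-maximal-operator} itself rather than relying on an external boundedness statement for $T_{\gamma/n}$ on $L^{1,1;{\mathbb A}}$: the point is to show that the supremum over $h\sim f$ in \eqref{E:sigma-frac} is comparable to its value at $h=f^{*}$. With $X'=L^{\infty,\infty;-{\mathbb A}}$ and using that $t\mapsto\int_{t}^{\infty}h(s)s^{\gamma/n-1}\,{\mathrm d}s$ is nonincreasing, a dyadic splitting of $\int_{t_{0}}^{\infty}h(s)s^{\gamma/n-1}\,{\mathrm d}s$ together with the Hardy--Littlewood inequality gives $\int_{t_{0}}^{\infty}h(s)s^{\gamma/n-1}\,{\mathrm d}s\lesssim t_{0}^{\gamma/n}f^{**}(t_{0})+\int_{t_{0}}^{\infty}f^{*}(s)s^{\gamma/n-1}\,{\mathrm d}s$, and a second dyadic estimate --- using that $\alpha_{0}\ge 0$, $\alpha_{\infty}\le 0$ make $\ell^{-{\mathbb A}}$ increasing and slowly varying --- bounds $\sup_{t}\ell^{-{\mathbb A}}(t)\,t^{\gamma/n}f^{**}(t)$ by $\sup_{t}\ell^{-{\mathbb A}}(t)\int_{t}^{\infty}f^{*}(s)s^{\gamma/n-1}\,{\mathrm d}s$; the reverse inequality being trivial, $\sigma(f)\approx\|f\|_{Y_{1}'}$. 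Cases~(d) and~(e), with $p=n/\gamma$, are the ones genuinely outside Theorem~\ref{T:fractional-corollary} --- indeed $T_{\gamma/n}$ is \emph{not} bounded on $L^{n/\gamma,q}$ for $q<\infty$, by Remark~\ref{R:comparison} --- and here Theorem~\ref{T:fractional-maximal-operator} is used as it stands: since $t\mapsto\int_{t}^{\infty}h(s)s^{\gamma/n-1}\,{\mathrm d}s$ is nonincreasing, its $X'(0,\infty)$-norm with $X'=L^{n/(n-\gamma),q';-{\mathbb A}}$ is literally $\bigl\|t^{1-\gamma/n-1/q'}\ell^{-{\mathbb A}}(t)\int_{t}^{\infty}h(s)s^{\gamma/n-1}\,{\mathrm d}s\bigr\|_{L^{q'}(0,\infty)}$, so \eqref{E:sigma-frac} already coincides with the displayed norm of $Y_{2}'$ and nothing more is needed once \eqref{E:fund} is checked (which reduces to $\alpha_{\infty}\ge 0$, the extra $\alpha_{0}>0$ in case~(e) merely separating it from case~(c)). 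Finally, for ${\mathbb A}=[0,0]$ the identification of $Y_{1}$ is immediate since $\sup_{t}\int_{t}^{\infty}f^{*}(s)s^{\gamma/n-1}\,{\mathrm d}s=\int_{0}^{\infty}f^{*}(s)s^{\gamma/n-1}\,{\mathrm d}s=\|f\|_{L^{n/\gamma,1}}$, so $Y_{1}=L^{n/(n-\gamma),\infty}$; for $Y_{2}$ I would prove $Y_{2}'=L^{1}$ by a two-sided estimate --- the bound $\sup_{h\sim f}(\cdots)\le C\|f\|_{L^{1}}$ from Minkowski's integral inequality (the inner $L^{q'}$-norm of $t^{1-\gamma/n-1/q'}\chi_{(0,s)}$ equals $C s^{1-\gamma/n}$, cancelling $s^{\gamma/n-1}$), and the matching lower bound by taking $h$ to be a rearrangement of $f$ supported on $(\mu,2\mu)$, $\mu=|\{f\neq 0\}|$ (truncated if $\mu=\infty$), so that $\int_{t}^{\infty}h(s)s^{\gamma/n-1}\,{\mathrm d}s\gtrsim\mu^{\gamma/n-1}\|f\|_{L^{1}}$ for $t<\mu$.

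I expect the main obstacle to be twofold. First, the boundedness, or comparison, estimates for the \emph{nonlinear} supremum operator $T_{\gamma/n}$ --- and, in case~(a), for the associated averaging- and conjugate-type functionals --- on the endpoint spaces: these cannot be obtained by interpolation and require ad hoc rearrangement and dyadic arguments, compounded by the care needed to move the broken-logarithmic factors $\ell^{{\mathbb A}}$, $\ell^{-{\mathbb A}}$ in and out of the Hardy-type integrals across the breakpoint $t=1$. Second, the two-sided estimate underpinning $Y_{2}=L^{\infty}$ when ${\mathbb A}=[0,0]$, where securing the right power of $\mu$ in the lower bound is exactly the point where the supremum in \eqref{E:sigma-frac} does strictly more than its value at $h=f^{*}$.
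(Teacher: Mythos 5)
Your proposal is correct and follows the same global skeleton as the paper: verify \eqref{E:fund} (resp.\ \eqref{E:bundedness-of-T}) for $X=L^{p,q;{\mathbb A}}$, invoke Theorems~\ref{T:fractional-maximal-operator} and~\ref{T:fractional-corollary} for boundedness and optimality, and then identify the abstractly defined associate norm. The differences are local. For case~(a) the paper does \emph{not} argue directly that the supremum over $h\sim f$ collapses; it observes that the criterion \eqref{E:pes} from \citep{GOP} for the boundedness of $T_{\gamma/n}$ on $L^{p,q;{\mathbb A}}$ with $q<\infty$ holds precisely when $1\le p<n/\gamma$, so that case~(a) falls under Theorem~\ref{T:fractional-corollary} exactly like case~(b), and the formula for $Y_1'$ drops out of \eqref{E:sigma-frac-corollary} with $X'=L^{\infty,\infty;-{\mathbb A}}$. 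Your direct dyadic argument (bounding $\int_{t}^{\infty}h\,s^{\gamma/n-1}\,{\mathrm d}s$ by $t^{\gamma/n}f^{**}(t)+\int_{t}^{\infty}f^{*}s^{\gamma/n-1}\,{\mathrm d}s$ and then absorbing the $f^{**}$ term) is sound — I checked both halves — but by Theorem~\ref{T:lenka} it is literally equivalent to the $T_{\gamma/n}$-boundedness you are avoiding, so it re-proves the \citep{GOP} endpoint by hand; the paper's route is shorter, yours is more self-contained. For case~(c) your Fubini computation and the paper's Hardy-inequality route both land on $Y'=L^{1,1;-{\mathbb A}}$. The most genuine divergence is the identification $Y_2=L^{\infty}$ for ${\mathbb A}=[0,0]$: the paper only computes the fundamental function of $Y_2'$ (lower bound via the shifted-rearrangement calculation \eqref{E:sigma-lower-bound}, upper bound via \eqref{vlk}) and then invokes the $\Lambda(X)=M(X)$ characterization of $L^{1}$ from \citep[Chapter~2, Theorem~5.13]{BS}, whereas you prove the full two-sided norm equivalence $\|f\|_{Y_2'}\approx\|f\|_{L^{1}}$ directly — Minkowski's integral inequality for the upper bound and essentially the same shifted rearrangement for the lower bound. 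Your version is more elementary and actually gives more (the equivalence on all of $\mathcal M_{+}$, not just on characteristic functions); both are valid. I found no gap.
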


Again, there is no simpler way of characterizing the spaces
$Y_{1}'$ and $Y_{2}'$.

\begin{remark}
We note that the range spaces in
\reftext{Theorem~\ref{T:fractional-maximal-operator-GLZ}} essentially improve
those from~\citep{EO} when $p=q=1$, $\alpha _{0}\geq 0$, $\alpha _{
\infty }\leq 0$ and $|\alpha _{0}| + |\alpha _{\infty }| > 0$, and also
when $p=\frac{n}{\gamma }$, $1\leq q<\infty $, $\alpha _{0} < 0$ and
$\alpha _{\infty }> 0$. It is also worth noting that the spaces
$L^{\frac{n}{n-\gamma }, 1;{\mathbb{A}}-1}$ and $L^{\frac{n}{n-\gamma
}, \infty ; {\mathbb{A}}}$ are not comparable in the sense that neither
of them is contained in the other (see~\cite{OP} for details).
\end{remark}

Our next aim is to describe in more detail the relation between
\reftext{Theorems~\ref{T:fractional-maximal-operator}
and~\ref{T:fractional-corollary}}. \reftext{Theorem~\ref{T:fractional-corollary}}
asserts, among other statements, that in the particular cases
when~\reftext{\eqref{E:bundedness-of-T}} is satisfied, the
functionals~\reftext{\eqref{E:sigma-frac-corollary}} and~$\sigma $
from~\reftext{\eqref{E:sigma-frac}} are equivalent. We shall now point out an
interesting fact that the converse is also true, namely
if~\reftext{\eqref{E:bundedness-of-T}} is not satisfied, then the functional
in~\reftext{\eqref{E:sigma-frac-corollary}} is \textit{not equivalent} to
$\sigma $ from~\reftext{\eqref{E:sigma-frac}}. That, in fact, means that it is
\textit{essentially smaller} than $\sigma $. This is achieved through
the following result, which is definitely of independent interest and
maybe even a~little surprising.

\begin{theorem}
\label{T:lenka}
Assume that $X$ is an~r.i.~space over $\mathbb{R}
^{n}$ and $\gamma \in (0,n)$. Then the following statements are
equivalent:

\textup{(a)} $T_{\frac{\gamma }{n}}\colon X(0,\infty )\to X(0,\infty
)$,

\textup{(b)} there exists a~positive constant $C$ such that,
%
\begin{eqnarray}[ll]
\sup _{\atopfrac{h\sim f}{h\ge 0}}\left \|  \int _{t}^{\infty }h(s)s^{{\frac{\gamma }{n}}-1}\,
\mathrm{
d}s\right \|  _{X'(0,\infty )}\nonumber\\
\qquad \leq C
\left \|  \int _{t}^{\infty }f
^{*}(s)s^{{\frac{\gamma }{n}}-1}\,\mathrm{
d}s\right \|  _{X'(0,\infty )} , \quad f\in
\mathcal{M}_{+}(\mathbb{R}^{n}). \label{E:b}
\end{eqnarray}
\end{theorem}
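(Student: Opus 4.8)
The plan is to prove the equivalence by establishing two implications, with the direction (a)$\Rightarrow$(b) being the routine one and (b)$\Rightarrow$(a) being the main obstacle, most likely settled via a duality/contrapositive argument together with a clever choice of test functions.

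For the implication (a)$\Rightarrow$(b), I would argue as follows. Fix $f \in \mathcal{M}_+(\mathbb{R}^n)$ and let $h \sim f$ be nonnegative and equimeasurable with $f$. By the Hardy--Littlewood inequality~\eqref{E:HL} applied in the associate-space duality formula, and using that the outer norm is $X'(0,\infty)$ which is rearrangement invariant, one has for any $g$ with $\|g\|_{X(0,\infty)}\le 1$ (which may be taken nonincreasing by~\eqref{E:corollary-of-HL} since the function $t \mapsto \int_t^\infty h(s)s^{\gamma/n-1}\,\mathrm{d}s$ is nonincreasing)
\begin{eqnarray*}
\int_0^\infty g(t)\int_t^\infty h(s)s^{\frac{\gamma}{n}-1}\,\mathrm{d}s\,\mathrm{d}t
= \int_0^\infty h(s)s^{\frac{\gamma}{n}-1}\Bigl(\int_0^s g(t)\,\mathrm{d}t\Bigr)\mathrm{d}s
\le \int_0^\infty f^*(s)s^{\frac{\gamma}{n}} (Pg)(s)\,\mathrm{d}s,
\end{eqnarray*}
using Fubini and then the Hardy--Littlewood inequality on the pairing of $h$ with the \emph{nonincreasing} function $s\mapsto s^{\gamma/n-1}\int_0^s g$; here I must check that $s \mapsto s^{\gamma/n-1}\int_0^s g(t)\,\mathrm{d}t$ is indeed nonincreasing, which follows because $g$ is nonincreasing and $\gamma/n<1$ (differentiate: $(\gamma/n-1)s^{\gamma/n-2}\int_0^s g + s^{\gamma/n-1}g(s) \le s^{\gamma/n-2}((\gamma/n-1)\int_0^s g + s\,g(s)) \le 0$ since $\int_0^s g \ge s\,g(s)$). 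The key point is then that $s^{\gamma/n}(Pg)(s) \le (P T_{\gamma/n} g)(s)s^{\gamma/n}$ is controlled: more precisely one shows $s^{\gamma/n}(Pg)(s) \lesssim \int_0^s (T_{\gamma/n}g)^*$, or rather directly that taking the supremum over $h\sim f$ is absorbed, because $h$ enters only through $f^* = h^*$ after the Hardy--Littlewood step. Thus the supremum on the left of~\eqref{E:b} is in fact bounded by $\|T_{\gamma/n}g\|$-type quantities, and invoking (a) to bound $\|T_{\gamma/n}g\|_{X(0,\infty)} \le C\|g\|_{X(0,\infty)} \le C$ closes the estimate, recognizing the resulting supremum over $g$ as precisely $\tau(f)$, which by the already-proven Theorem~\ref{T:fractional-corollary} is equivalent to the right-hand side of~\eqref{E:b}.

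For the harder implication (b)$\Rightarrow$(a), I would argue by contraposition: assume $T_{\gamma/n}$ is \emph{not} bounded on $X(0,\infty)$ and produce, for each $N$, a function $f_N$ violating~\eqref{E:b} with constant $N$. Unboundedness gives nonincreasing $g_N \ge 0$ with $\|g_N\|_{X(0,\infty)}\le 1$ but $\|T_{\gamma/n}g_N\|_{X(0,\infty)} \to \infty$. The strategy is to dualize: choose $f_N$ so that its nonincreasing rearrangement is (essentially) $g_N$, and exploit the fact that while $\int_t^\infty f_N^*(s)s^{\gamma/n-1}\,\mathrm{d}s$ has a fixed $X'(0,\infty)$-norm, a cleverly rearranged $h_N \sim f_N$ makes $\int_t^\infty h_N(s)s^{\gamma/n-1}\,\mathrm{d}s$ behave like $t^{\gamma/n}\sup_{s\ge t}g_N^{**}(s)$ or a comparable supremum expression, whose $X'(0,\infty)$-norm is large precisely because the associate norm detects the failure of $T_{\gamma/n}$ boundedness — here one uses the duality $T_{\gamma/n}\colon X \to X \Leftrightarrow$ boundedness of a companion operator on $X'$ (as in the references~\cite{T2,T3,GOP}). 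Concretely, I expect to place the mass of $h_N$ on a sequence of intervals far out near infinity, arranged so that the tail integral picks up a supremum rather than an average, mimicking the $M_\gamma$ versus $P$ discrepancy; the rearrangement invariance of the supremum in~\eqref{E:sigma-frac} is what makes this possible. The main obstacle is exactly this construction: translating the scalar failure $\|T_{\gamma/n}g_N\| \to \infty$ into a \emph{rearrangement} $h_N$ of a single function whose tail integral, measured in $X'(0,\infty)$, outgrows the tail integral of $f_N^*$ by the factor $N$. I would look to the techniques of~\cite{GOP} and the duality arguments from~\cite{CPS,T2} to carry out this step, most likely via a careful discretization where $g_N$ is a sum of characteristic functions of dyadic intervals and $h_N$ redistributes these blocks to convert the inner Hardy averaging $P$ into the supremum operator $T_{\gamma/n}$. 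Once such $f_N$ is produced, letting $N \to \infty$ contradicts~\eqref{E:b}, completing the proof.
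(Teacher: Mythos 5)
Your implication (a)$\Rightarrow$(b) is sound and is essentially the paper's: once \reftext{Theorem~\ref{T:fractional-corollary}} gives the equivalence of $\tau$ with the functional~\reftext{\eqref{E:sigma-frac-corollary}} under hypothesis (a), and \reftext{Theorem~\ref{T:fractional-maximal-operator}} identifies the same optimal range norm with $\sigma$ from~\reftext{\eqref{E:sigma-frac}}, the inequality~\reftext{\eqref{E:b}} follows; your explicit Fubini/Hardy--Littlewood computation merely re-traverses part of the proof of \reftext{Theorem~\ref{T:fractional-corollary}}.

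The implication (b)$\Rightarrow$(a), however, is where the real content lies, and your proposal leaves it essentially unproved. You reduce everything to a construction --- ``place the mass of $h_N$ on a sequence of intervals far out near infinity \dots via a careful discretization'' --- that you do not carry out, and the route you indicate for finding it is doubtful: you invoke ``the duality $T_{\frac{\gamma}{n}}\colon X\to X \Leftrightarrow$ boundedness of a companion operator on $X'$'', but $T_{\frac{\gamma}{n}}$ is a supremum operator, hence nonlinear, and it has no adjoint in the sense of~\reftext{\eqref{E:duality-of-operators}}; the absence of such a duality is precisely why the fractional case is harder than the case $\gamma=0$, as emphasized at the start of Section~\ref{sec4}. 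It is therefore not at all clear that the scalar failure $\|T_{\frac{\gamma}{n}}g_N\|_{X(0,\infty)}\to\infty$ can be transferred to a violation of~\reftext{\eqref{E:b}} by any direct rearrangement of a test function. The paper's proof of this direction is direct rather than contrapositive and is considerably more structured: from (b) it deduces that the functional $g\mapsto\bigl\|\int_t^\infty g^*(s)s^{\frac{\gamma}{n}-1}\,\mathrm{d}s\bigr\|_{X'(0,\infty)}$ is an r.i.~norm defining a space $Y(0,\infty)$ that is the optimal domain for the operator $Rg(t)=\int_t^\infty|g(s)|s^{\frac{\gamma}{n}-1}\,\mathrm{d}s$ into $X'(0,\infty)$; it then proves, using the discretization \reftext{Lemma~\ref{L:lenka}} and an ``optimal domain of the optimal range'' argument, that $X'(0,\infty)$ is also the optimal range for $Y(0,\infty)$; from this it derives the norm identity $\|g\|_{X(0,\infty)}\approx\|t^{\frac{\gamma}{n}}g^{**}(t)\|_{Y'(0,\infty)}$; and finally it verifies boundedness of $T_{\frac{\gamma}{n}}$ on the space carrying this latter norm by combining the pointwise estimate $(T_{\frac{\gamma}{n}}g)^{**}\le K\,T_{\frac{\gamma}{n}}(g^{**})$ with the supremum-removal \reftext{Lemma~\ref{L:unsup}}. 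None of these ingredients appears in your sketch, so the hard half of the theorem remains open in your proposal.
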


\begin{remark}
\label{R:b}
We note that, since $f\sim f^{*}$, the converse inequality
to~\reftext{\eqref{E:b}}, namely
\begin{eqnarray*}
\left \|  \int _{t}^{\infty }f^{*}(s)s^{{\frac{\gamma }{n}}-1}\,\mathrm{
d}s\right \|  _{X'(0,\infty )}
\leq \sup _{\atopfrac{h\sim f}{h\ge 0}}\left \|  \int _{t}^{\infty }h(s)s^{{\frac{\gamma }{n}}-1}\,
\mathrm{
d}s\right \|  _{X'(0,\infty )},
\end{eqnarray*}
is trivial. In other words, if~\reftext{\eqref{E:b}} is true, then the two
quantities are in fact equivalent.
\end{remark}

In the proof of \reftext{Theorem~\ref{T:lenka}} we shall need the following
auxiliary result of independent interest.

\begin{lemma}
\label{L:lenka}
Assume that $I\colon (0,\infty )\to (0,\infty )$ is a~nondecreasing
function satisfying
%
\begin{eqnarray}
\label{E:I}
\int _{0}^{t} \frac{\mathrm{
d}s}{I(s)}
\approx \int _{t}^{2t} \frac{\mathrm{
d}s}{I(s)}
, \quad t\in (0,\infty ).
\end{eqnarray}
Let $N\in \mathbb{N}$, $0<t_{1}<\cdots <t_{N}<\infty $ and $a_{1},
\dots ,a_{N}>0$. Let
\begin{eqnarray*}
u=\sum _{i=1}^{N}a_{i}\chi _{(0,t_{i})}
\end{eqnarray*}
and let $X$ be an~r.i.~space over $(0,\infty)$. Then
\begin{eqnarray*}
\left \|  \int _{t}^{\infty } \frac{u(s)}{I(s)}\,\mathrm{
d}s \right \|  _{X(0,\infty )}
\approx \|v\|_{X(0,\infty )},
\end{eqnarray*}
where
\begin{eqnarray*}
v=\sum _{i=1}^{N}a_{i}\frac{t_{i}}{I(t_{i})}\chi _{(0,t_{i})}.
\end{eqnarray*}
\end{lemma}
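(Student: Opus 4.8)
The plan is to reduce the norm of the function $t\mapsto\int_t^\infty u(s)/I(s)\,\mathrm ds$ to the norm of a step function whose jumps occur at the same points $t_i$, and then match it against $v$. First I would write $u(s)/I(s)=\sum_{i=1}^N a_i\chi_{(0,t_i)}(s)/I(s)$ and compute, for $t$ in the interval $[t_{k-1},t_k)$ (with the conventions $t_0=0$, $t_{N+1}=\infty$), the explicit value
\begin{eqnarray*}
\int_t^\infty\frac{u(s)}{I(s)}\,\mathrm ds=\sum_{i=k}^N a_i\int_t^{t_i}\frac{\mathrm ds}{I(s)}.
\end{eqnarray*}
Since the function on the left is nonincreasing on $(0,\infty)$ and constant-free only through the $1/I$ weight, the right-hand side is comparable, on $[t_{k-1},t_k)$, to a single model quantity. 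The crucial point is to show that on this interval
\begin{eqnarray*}
\int_t^\infty\frac{u(s)}{I(s)}\,\mathrm ds\approx\sum_{i=k}^N a_i\frac{t_i}{I(t_i)},
\end{eqnarray*}
with constants independent of $N$, the $a_i$'s and the $t_i$'s. The upper bound here is where the doubling-type hypothesis~\eqref{E:I} enters: because $I$ is nondecreasing, $\int_t^{t_i}\mathrm ds/I(s)\le\int_0^{t_i}\mathrm ds/I(s)$, and iterating~\eqref{E:I} along a dyadic decomposition of $(0,t_i)$ gives $\int_0^{t_i}\mathrm ds/I(s)\approx t_i/I(t_i)$ (a geometric series with ratio bounded below $1$, summing the contributions of $(2^{-j-1}t_i,2^{-j}t_i)$). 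The lower bound is easy: $\int_t^{t_i}\mathrm ds/I(s)\ge\int_{t_i/2}^{t_i}\mathrm ds/I(s)\ge (t_i/2)/I(t_i)$ once $t\le t_i/2$, and one checks the boundary interval $[t_{k-1},t_k)$ directly, noting that the $i=k$ term dominates near $t=t_{k-1}$.

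Once this pointwise two-sided estimate is in hand, the function $t\mapsto\int_t^\infty u(s)/I(s)\,\mathrm ds$ is equivalent, uniformly, to the step function $w$ that equals $\sum_{i\ge k} a_i t_i/I(t_i)$ on $[t_{k-1},t_k)$; by the lattice axiom (P2) for $X(0,\infty)$ the $X$-norms are comparable. It remains to compare $\|w\|_{X(0,\infty)}$ with $\|v\|_{X(0,\infty)}$, where $v=\sum_{i=1}^N a_i (t_i/I(t_i))\chi_{(0,t_i)}$. But $v$ is itself a nonincreasing step function with jumps exactly at the $t_i$, and its value on $[t_{k-1},t_k)$ is precisely $\sum_{i\ge k} a_i t_i/I(t_i)$, i.e. $v=w$ pointwise. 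Hence $\|w\|_{X(0,\infty)}=\|v\|_{X(0,\infty)}$ and the chain of equivalences closes.

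The main obstacle I anticipate is the uniform upper estimate $\int_0^{t_i}\mathrm ds/I(s)\lesssim t_i/I(t_i)$: one must make sure the constant coming from summing the dyadic geometric series depends only on the implied constants in~\eqref{E:I} and not on how the points $t_i$ are spaced or on $N$. This is genuinely where hypothesis~\eqref{E:I} is used in an essential way — without a doubling condition on $\int_0^{\cdot}\mathrm ds/I(s)$ the reduction to the model function $v$ fails. Everything else (the explicit integration, the lower bound, and the final identification $w=v$) is routine, and the passage between $X$-norms is just monotonicity.
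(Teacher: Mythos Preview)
Your plan has a genuine gap in the lower bound. The pointwise equivalence
\[
\int_t^\infty\frac{u(s)}{I(s)}\,\mathrm ds\approx\sum_{i=k}^N a_i\frac{t_i}{I(t_i)}\quad\text{on }[t_{k-1},t_k)
\]
that you aim for is simply false near the right endpoint $t_k$. Already with $N=1$ the left side is $a_1\int_t^{t_1}\mathrm ds/I(s)$ on $(0,t_1)$, which tends to $0$ as $t\to t_1^-$, while the right side is the fixed positive number $a_1 t_1/I(t_1)$. Your remark that ``one checks the boundary interval directly, noting that the $i=k$ term dominates near $t=t_{k-1}$'' does not address this: the trouble is near $t_k$, not $t_{k-1}$, and no borrowing from the terms $i>k$ can rescue it since $a_k$ may be arbitrarily large compared with the remaining $a_i$.

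The paper avoids this by not insisting on a pointwise lower bound on all of $(0,t_i)$. It uses exactly the term-by-term bound you noted, $\int_t^{t_i}\mathrm ds/I(s)\gtrsim t_i/I(t_i)$ for $t\le t_i/2$, to obtain
\[
\int_t^\infty\frac{u(s)}{I(s)}\,\mathrm ds\gtrsim\sum_{i=1}^N a_i\frac{t_i}{I(t_i)}\chi_{(0,t_i/2)}(t)=(D_2 v)(t),
\]
and then passes to norms using the boundedness of the dilation operator on $X(0,\infty)$ to conclude $\|D_2 v\|_{X(0,\infty)}\approx\|v\|_{X(0,\infty)}$. Your upper bound and the identification $w=v$ are correct and match the paper's argument; only the lower bound needs this dilation step at the norm level in place of the unattainable pointwise estimate.
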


\begin{proof}
First, we have
\begin{eqnarray*}
\int _{t}^{\infty }\frac{u(s)}{I(s)}\,ds
= \sum _{i=1}^{N}
\int _{t}^{
\infty } \frac{a_{i}\chi _{(0,t_{i})}(s)}{I(s)}\,\mathrm{
d}s
= \sum _{i=1}^{N}
a_{i}\chi _{(0,t_{i})}(t) \int _{t}^{t_{i}} \frac{
\mathrm{
d}s}{I(s)}, \quad t\in (0,\infty ).
\end{eqnarray*}
By \reftext{\eqref{E:I}},
\begin{eqnarray*}
\int _{0}^{t_{i}}\frac{\mathrm{
d}s}{I(s)}
\geq \int _{t}^{t_{i}}\frac{\mathrm{
d}s}{I(s)}
\geq \int _{\frac{t_{i}}{2}}^{t_{i}}\frac{\mathrm{
d}s}{I(s)}
\approx \int _{0}^{t_{i}}\frac{\mathrm{
d}s}{I(s)}, \quad t\in (0,\tfrac{t_{i}}{2}),
\end{eqnarray*}
whence
\begin{eqnarray*}
\sum _{i=1}^{N} a_{i}\chi _{(0,t_{i})}(t)\int _{t}^{t_{i}}\frac{\mathrm{
d}s}{I(s)}
& \geq &\sum _{i=1}^{N}
a_{i}\chi _{(0,\frac{t_{i}}{2})}(t)
\int _{t}^{t_{i}}\frac{\mathrm{
d}s}{I(s)}
\approx \sum _{i=1}^{N}
a_{i}\chi _{(0,\frac{t_{i}}{2})}(t)
\int _{0}^{t_{i}}\frac{\mathrm{
d}s}{I(s)}
\\
& \approx & \sum _{i=1}^{N}
a_{i}\frac{t_{i}}{I(t_{i})}
\chi _{(0,\frac{t_{i}}{2})}(t), \quad t\in (0,\infty
).
\end{eqnarray*}
Therefore, due to the boundedness of the dilation operator on
$X(0,\infty )$, we have
\begin{eqnarray*}
\left \|  \int _{t}^{\infty }
\frac{u(s)}{I(s)}\,\mathrm{
d}s
\right \|  _{X(0,\infty )}
& \approx & \left \|  \sum _{i=1}^{N}
a
_{i}\frac{t_{i}}{I(t_{i})}\chi _{(0,\frac{t_{i}}{2})}
\right \|  _{X(0,
\infty )}
\approx \left \|  \sum _{i=1}^{N}
a_{i}
\frac{t_{i}}{I(t_{i})} \chi _{(0,t_{i})}
\right \|  _{X(0,\infty )}
\\
&= &\|v\|_{X(0,\infty )}
\geq \left \|  \sum _{i=1}^{N}
a_{i}
\chi _{(0,t_{i})}(t)\int _{t_{i}}^{2t_{i}}\frac{\mathrm{
d}s}{I(s)}
\right \|  _{X(0,\infty )}
\\
&\approx &\left \|  \sum _{i=1}^{N}
a_{i}\chi _{(0,t_{i})}(t)\int _{0}
^{t_{i}}\frac{\mathrm{
d}s}{I(s)}
\right \|  _{X(0,\infty )}
\geq \left \|  \int _{t}^{\infty
}
\frac{u(s)}{I(s)}\,\mathrm{
d}s
\right \|  _{X(0,\infty )}.\qedhere
\end{eqnarray*}
\end{proof}

We shall also need a variant of the result obtained in
\citep[Theorem~3.9]{T2} and also~\citep[Lemma~3.3]{CP-TAMS} on the
interval $(0,\infty )$. Here we present a more general claim with a
shorter and more comprehensive proof.

In the following lemma, we work with the so-called quasiconcave
functions instead of power functions. Recall that a nonnegative function
$\varphi $ defined on $[0,\infty )$ is said to be \emph{quasiconcave}
provided that $\varphi $ is nondecreasing on $[0,\infty )$,
$\frac{\varphi (t)}{t}$ is nonincreasing on $(0,\infty )$ and
$\varphi (0)=0$. It follows that $\varphi $ is absolutely continuous
except perhaps at the origin and
%
\begin{eqnarray}
\label{E:phi_AC}
\varphi (t)-\varphi (s)\le \int _{s}^{t} \frac{\varphi (r)}{r}\,\mathrm{
d}r, \quad t\in (0,\infty), \ s\in(0,t].
\end{eqnarray}
See \citep[Chapter II, Lemma~1.1]{Kre}.

\begin{lemma}
\label{L:unsup}
Let $\varphi $ be a quasiconcave function. Then there exists a constant
$C>0$ such that
%
\begin{eqnarray}
\label{E:unsup_in}
\int _{0}^{\tau } \sup _{t\leq s<\infty } \varphi (s) f(s)\,\mathrm{
d}t
\le C \int _{0}^{\tau } (\varphi f)^{*}(t) \,\mathrm{
d}t
\end{eqnarray}
for every $\tau\in (0,\infty )$ and every nonincreasing $f\in
\mathcal{M}_{+}(0,\infty )$.

Furthermore, if $X$ is an~r.i.~space over
$(0,\infty )$, then
%
\begin{eqnarray}
\label{E:unsup_ri}
\Bigl \| \sup _{t\leq s<\infty } \varphi (s) f(s) \Bigr \|_{X(0,\infty
)}
\le C\, \bigl \| \varphi f \bigr \|_{X(0,\infty )}
\end{eqnarray}
for every nonincreasing $f\in \mathcal{M}_{+}(0,\infty )$.
\end{lemma}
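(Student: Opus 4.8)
The plan is to prove the integral inequality~\eqref{E:unsup_in} first, by a distribution-function (layer-cake) argument, and then to deduce the rearrangement-invariant estimate~\eqref{E:unsup_ri} from it as an immediate consequence of the Hardy--Littlewood--P\'olya principle. I would begin by writing $u=\varphi f$ and $g(t)=\sup_{t\le s<\infty}u(s)$, observing that $g$ is nonincreasing on $(0,\infty)$, so $g=g^{*}$, and that it therefore suffices to compare $\int_{0}^{\tau}g$ with $\int_{0}^{\tau}u^{*}$ for every $\tau\in(0,\infty)$.

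For this comparison I would pass to distribution functions. For $\lambda>0$ set $m_{\lambda}=|\{u>\lambda\}|$ and $s_{\lambda}=\sup\{s>0:u(s)>\lambda\}$, with the convention $\sup\emptyset=0$. A direct check from the definitions shows that $\{t>0:g(t)>\lambda\}$ is an interval of length $s_{\lambda}$, so the layer-cake formula together with Tonelli's theorem gives $\int_{0}^{\tau}g=\int_{0}^{\infty}\min(\tau,s_{\lambda})\,\mathrm{d}\lambda$ and, likewise, $\int_{0}^{\tau}u^{*}=\int_{0}^{\infty}\min(\tau,m_{\lambda})\,\mathrm{d}\lambda$, the latter because $u^{*}$ is nonincreasing and equimeasurable with $u$. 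The crucial step---and the only place where quasiconcavity of $\varphi$ enters---is the pointwise estimate $s_{\lambda}\le 2m_{\lambda/2}$: picking $s_{0}$ with $u(s_{0})>\lambda$ and $s_{0}$ as close to $s_{\lambda}$ as we like, the monotonicity of $f$ together with $\varphi(s_{0}/2)\ge\tfrac12\varphi(s_{0})$ (a consequence of $\varphi(r)/r$ being nonincreasing) forces $u>\lambda/2$ throughout $(s_{0}/2,s_{0})$, whence $m_{\lambda/2}\ge s_{0}/2$; letting $s_{0}\uparrow s_{\lambda}$ completes it. Feeding this back, using the elementary bound $\min(\tau,2x)\le 2\min(\tau,x)$ and the substitution $\mu=\lambda/2$, yields $\int_{0}^{\tau}g\le 4\int_{0}^{\tau}u^{*}$, which is~\eqref{E:unsup_in} (in fact with $C=4$); the case of an infinite right-hand side is trivial, and the manipulations are valid with the usual conventions on $+\infty$.

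Finally, \eqref{E:unsup_in} is precisely the assertion that $\int_{0}^{\tau}g^{*}\le\int_{0}^{\tau}(C\varphi f)^{*}$ for every $\tau$---that is, $g\prec C\varphi f$ in the sense of the Hardy--Littlewood--P\'olya relation, since $g^{*}=g$ and $(C\varphi f)^{*}=C(\varphi f)^{*}$. As $(0,\infty)$ with the Lebesgue measure is nonatomic, hence resonant, the Hardy--Littlewood--P\'olya principle gives $\|g\|_{X(0,\infty)}\le C\|\varphi f\|_{X(0,\infty)}$, which is exactly~\eqref{E:unsup_ri}, because $g(t)=\sup_{t\le s<\infty}\varphi(s)f(s)$. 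I do not anticipate any serious obstacle: the whole argument hinges on the single inequality $s_{\lambda}\le 2m_{\lambda/2}$, and everything else is routine bookkeeping with distribution functions---with only mild care required for the convention $\sup\emptyset=0$ and for possibly infinite values---together with a citation of a standard principle.
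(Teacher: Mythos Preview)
Your argument is correct, and it is genuinely different from the paper's proof. The paper proceeds by splitting the integrand pointwise: it writes
\[
\sup_{t\le s<\infty}\varphi(s)f(s)\le \sup_{t\le s\le\tau}[\varphi(s)-\varphi(t)]f(s)+\varphi(t)\sup_{t\le s\le\tau}f(s)+\sup_{\tau\le s<\infty}\varphi(s)f(s),
\]
integrates the three pieces separately over $(0,\tau)$, and estimates them using the absolute-continuity bound $\varphi(t)-\varphi(s)\le\int_s^t\varphi(r)/r\,\mathrm{d}r$, the monotonicity of $f$, and the doubling estimate $\varphi(2t)\le 2\varphi(t)$, respectively; this yields $C=6$. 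Your route via distribution functions bypasses the splitting entirely: the single level-set inequality $s_\lambda\le 2m_{\lambda/2}$ encodes the full quasiconcavity information (both the monotonicity of $\varphi$ and of $\varphi(t)/t$) in one stroke and gives the sharper constant $C=4$. Your proof is also slightly more elementary in that it never invokes the absolute continuity of $\varphi$. The paper's decomposition, on the other hand, makes the separate roles of the ``near'' part ($s\le\tau$) and the ``far'' tail ($s>\tau$) explicit, which can be useful when one wants to track finer structure; but for the present lemma your argument is cleaner.
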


\begin{proof}
Let $f\in \mathcal{M}_{+}(0,\infty )$ be a nonincreasing function and
fix $\tau \in (0,\infty )$. We split the supremum into three parts, namely
\begin{eqnarray*}
\int _{0}^{\tau } \sup _{t\le s<\infty } \varphi (s) f(s)\,\mathrm{
d}t
& \le & \int _{0}^{\tau } \sup _{t\le s\le \tau } \varphi (s) f(s)\,\mathrm{
d}t
+ \tau \, \sup _{\tau \le s<\infty } \varphi (s) f(s)
\\
& \le &\int _{0}^{\tau } \sup _{t\le s\le \tau }
\bigl [ \varphi (s) - \varphi (t)
\bigr ] f(s)\,\mathrm{
d}t
\\
&&
+ \int _{0}^{\tau } \varphi (t) \sup _{t\le s\le \tau } f(s)\,\mathrm{
d}t
+ \tau \, \sup _{\tau \le s<\infty } \varphi (s) f(s)
\\
& = &\mbox{I} + \mbox{II} + \mbox{III}.
\end{eqnarray*}
By \reftext{\eqref{E:phi_AC}} and the Hardy--Littlewood inequality, we have
\begin{eqnarray*}
\mbox{I}
& \le &\int _{0}^{\tau } \sup _{t\le s\le \tau } \biggl ( \int _{t}^{s}
\frac{
\varphi (r)}{r}\,\mathrm{
d}r \biggr ) f(s)\,\mathrm{
d}t
\le \int _{0}^{\tau } \sup _{t\le s\le \tau } \int _{t}^{s}
\frac{\varphi (r)}{r}f(r)\,\mathrm{
d}r\, \mathrm{
d}t
\\
& = &\int _{0}^{\tau } \int _{t}^{\tau } \frac{\varphi (r)}{r} f(r)\,\mathrm{
d}r\, \mathrm{
d}t
= \int _{0}^{\tau } \int _{0}^{r} \frac{\varphi (r)}{r} f(r)\,\mathrm{
d}t\, \mathrm{
d}r
\\
& = &\int _{0}^{\tau } \varphi (r) f(r)\, \mathrm{
d}r
\le \int _{0}^{\tau } (\varphi f)^{*}(t)\,\mathrm{
d}t.
\end{eqnarray*}
The second term is obviously estimated by the right hand side of \reftext{\eqref{E:unsup_in}}. Let us consider the third term. Observe that, by \reftext{\eqref{E:phi_AC}},
\begin{eqnarray*}
\varphi (2t)-\varphi (t)
\le \int _{t}^{2t} \frac{\varphi (r)}{r}\,
\mathrm{
d}r
\le \varphi (t)
, \quad t\in (0,\infty ),
\end{eqnarray*}
since $\varphi (t)/t$ is nonincreasing, whence $\varphi (2t)\le 2
\varphi (t)$ for $t\in (0,\infty )$. Using this and the fact that
$\varphi $ is nondecreasing, we get
%
\begin{eqnarray}
\label{E:phi_int}
\varphi (t)
\le 2 \varphi (t/2)
\le \frac{4}{t} \int _{t/2}^{t} \varphi
(r)\,\mathrm{
d}r
\le \frac{4}{t} \int _{0}^{t} \varphi (r)\,\mathrm{
d}r
, \quad t\in (0,\infty ).
\end{eqnarray}
Using \reftext{\eqref{E:phi_int}} we obtain
\begin{eqnarray*}
\mbox{III}
& = &\tau \, \sup _{\tau \le s<\infty } \varphi (s) f(s)
\le 4\tau
\, \sup _{\tau \le s<\infty }
\Bigl ( { \frac{1}{s} } \int _{0}^{s} \varphi
(r)\,\mathrm{
d}r \Bigr ) f(s)
\\
& \le & 4\tau \, \sup _{\tau \le s<\infty } { \frac{1}{s} } \int _{0}^{s} \varphi
(r) f(r)\,\mathrm{
d}r
\le 4\tau \, \sup _{\tau \le s<\infty } {\frac{1}{s} } \int _{0}^{s} (\varphi
f)^{*}(t)\,\mathrm{
d}t
\\
& =& 4\int _{0}^{\tau } (\varphi f)^{*}(t)\,\mathrm{
d}t,
\end{eqnarray*}
where in the second inequality we used that $f$ is nonincreasing and the
third one is due to the Hardy--Littlewood inequality. Combination
of the estimates gives \reftext{\eqref{E:unsup_in}} with $C=6$. The inequality \reftext{\eqref{E:unsup_ri}} (with the same $C$) then follows from \reftext{\eqref{E:unsup_in}} by the
Hardy-Littlewood-P\'{o}lya principle.
\end{proof}

\begin{proof}[Proof of \reftext{Theorem~\ref{T:lenka}}]
Assume first that (a) is true. Then the associate norm of the optimal
r.i.~range partner space for $X$ with respect to $M_{\gamma }$ is
equivalent to \reftext{\eqref{E:sigma-frac-corollary}} owing to
\reftext{Theorem~\ref{T:fractional-corollary}}. On the other hand, that norm is
also equivalent to \reftext{\eqref{E:sigma-frac}} by
\reftext{Theorem~\ref{T:fractional-maximal-operator}}. We recall that the
assumption~\reftext{\eqref{E:fund}} of this theorem is satisfied since it follows
from (a), as was pointed out in \reftext{Remark~\ref{R:comparison}}. Combining
these two facts, we immediately obtain (b) (see also \reftext{Remark~\ref{R:b}}).

The converse implication is considerably more involved. Suppose that (b)
holds. Then the functional
%
\begin{eqnarray}
\label{eq:plenka}
g \mapsto \left \|  \int _{t}^{\infty }
g^{*}(s)s^{{\frac{\gamma }{n}}-1}
\,\mathrm{
d}s
\right \|  _{X'(0,\infty )}
\end{eqnarray}
is equivalent to $\sigma $ from~\reftext{\eqref{E:sigma-frac}}, which in turn is
known to be an~r.i.~norm thanks to
\reftext{Theorem~\ref{T:fractional-maximal-operator}}. We note that \reftext{\eqref{E:fund}} is indeed satisfied because it follows from the proof of
\reftext{Theorem~\ref{T:fractional-maximal-operator}} that it holds if and only
if $\sigma (u) < \infty $ for every nonnegative simple function $u$,
which can be readily verified here thanks to (b). Hence the collection\looseness=1
\begin{eqnarray*}
Y(0,\infty )=\left \{  g\in \mathcal{M}(0,\infty ), \quad \left \|  \int
_{t}^{\infty }g^{*}(s)s^{{\frac{\gamma }{n}}-1}\,\mathrm{
d}s\right \|  _{X'(0,\infty )}<\infty \right \}  ,
\end{eqnarray*}
endowed with the functional
\begin{eqnarray*}
\|g\|_{Y(0,\infty )}=\left \|  \int _{t}^{\infty }g^{*}(s)s^{{\frac{
\gamma }{n}}-1}\,\mathrm{
d}s\right \|  _{X'(0,\infty )},
\end{eqnarray*}
is equivalent to an~r.i.~space. Define the operator
$R$ on $\mathcal{M}(0,\infty )$ by
\begin{eqnarray*}
Rg(t)=\int _{t}^{\infty }|g(s)|s^{{\frac{\gamma }{n}}-1}\,\mathrm{
d}s,\quad t\in (0,\infty ).
\end{eqnarray*}
Then we have
\begin{eqnarray*}
\|Rg^{*}\|_{X'(0,\infty )}=\|g\|_{Y(0,\infty )}, \quad
g\in \mathcal{M}(0,\infty ).
\end{eqnarray*}
Therefore, using also the equivalence of~\reftext{\eqref{T:Lenka-unrestricted}}
and~\reftext{\eqref{T:Lenka-nonincreasing}}, it clearly follows that
%
\begin{eqnarray}
\label{E:H-bounded}
R\colon Y(0,\infty ) \to X'(0,\infty )
\end{eqnarray}
and that $Y(0,\infty )$ is the optimal (largest possible)
r.i.~space rendering~\reftext{\eqref{E:H-bounded}} true (in
other words, it is the optimal r.i.~domain partner
space for $X'(0,\infty )$ with respect to the operator $R$).

We however claim a~considerably less obvious fact, namely that
$X'(0,\infty )$ is also the smallest possible rearrangement-invariant
space in~\reftext{\eqref{E:H-bounded}}, that is, it is the optimal
r.i.~range partner space for $Y(0,\infty )$ with
respect to the operator~$R$.

We know that $R$ is bounded from $Y(0,\infty )$ to $X'(0,\infty )$.
Therefore we are entitled to denote the optimal rearrangement-invariant
range partner for $Y(0,\infty )$ with respect to $R$ by $Y_{R}(0,
\infty )$. Denote further by $Y_{R_{D}}(0,\infty )$ the optimal
r.i.~domain partner for $Y_{R}(0,\infty )$ with
respect to $R$. Then, using the same reasoning as above, we obtain that
\begin{eqnarray*}
\|g\|_{Y_{R_{D}}(0,\infty )}\approx \left \|  \int _{t}^{\infty }g^{*}(s)s
^{{\frac{\gamma }{n}}-1}\,\mathrm{
d}s\right \|  _{Y_{R}(0,\infty )}, \quad g\in
\mathcal{M}(0,\infty ).
\end{eqnarray*}

Only an easy observation is needed to realize that once a~space is the
optimal domain partner of \textit{some} space, then it is necessarily
also the optimal domain partner to its own optimal range partner.
Indeed, knowing that $Y(0,\infty )$ is optimal in $R\colon Y(0,\infty
)\to X'(0,\infty )$, assume that $R\colon Z(0,\infty )\to Y_{R}(0,
\infty )$. By optimality of $Y_{R}(0,\infty )$ in $R\colon Y(0,\infty
)\to Y_{R}(0,\infty )$, one necessarily has $Y_{R}(0,\infty )\hookrightarrow
X'(0,\infty )$. Thus, $R\colon Z(0,\infty )\to X'(0,\infty )$. But, by
optimality of $Y(0,\infty )$ in $R\colon Y(0,\infty )\to X'(0,\infty
)$, it follows that $Z(0,\infty )\hookrightarrow Y(0,\infty )$.

Consequently, $Y(0,\infty )=Y_{R_{D}}(0,\infty )$, that is,
\begin{eqnarray*}
\left \|  \int _{t}^{\infty }g^{*}(s)s^{{\frac{\gamma }{n}}-1}\,\mathrm{
d}s\right \|  _{X'(0,\infty )}
\approx \left \|  \int _{t}^{\infty }g
^{*}(s)s^{{\frac{\gamma }{n}}-1}\,\mathrm{
d}s\right \|  _{Y_{R}(0,\infty )}, \quad g\in
\mathcal{M}(0,\infty ).
\end{eqnarray*}
Assume that $u=\sum _{i=1}^{N}b_{i}\chi _{(0,s_{i})}$ for some
$N\in \mathbb{N}$, $0 < s_{1} <\dots < s_{N}<\infty $ and $b_{1},
\dots , b_{N} > 0$. Let further
%
\begin{eqnarray}
\label{eq2:lenka}
v=\sum _{i=1}^{N} b_{i}\frac{s_{i}}{I(s_{i})}\chi _{(0,s_{i})},
\end{eqnarray}
where $I(t)=t^{1 - {\frac{\gamma }{n}}}$, $t\in (0, \infty )$. Note that
the function $I$ satisfies the assumptions of \reftext{Lemma~\ref{L:lenka}}.
Therefore we are entitled to use the lemma, whence we get
\begin{eqnarray*}
\|v\|_{X'(0,\infty )}
\approx \left \|  \int _{t}^{\infty }
u(s)s^{
{\frac{\gamma }{n}}-1}\,\mathrm{
d}s
\right \|  _{X'(0,\infty )}
\approx \left \|  \int _{t}^{\infty }
u(s)s
^{{\frac{\gamma }{n}}-1}\,\mathrm{
d}s
\right \|  _{Y_{R}(0,\infty )}
\approx \|v\|_{Y_{R}(0,\infty )}.
\end{eqnarray*}
Now, if $f\in \mathcal{M}(\mathbb{R}^{n})$, then there is a sequence
$\{v_{n}\}$ of nonnegative simple functions in the form of \reftext{\eqref{eq2:lenka}} satisfying $v_{n}\nearrow f^{*}$. By the Fatou
property and the computations above, we get $X'(0,\infty )=Y_{R}(0,
\infty )$. This proves that $X'(0,\infty )$ is indeed the optimal range
space in~\reftext{\eqref{E:H-bounded}}.

We next claim that
%
\begin{eqnarray}
\label{E:next-claim}
\|g\|_{X(0,\infty )}\approx \left \|
t^{{\frac{\gamma }{n}}}g^{**}(t)\right \|  _{Y'(0,\infty )},\quad
g\in \mathcal{M}(0,\infty ).
\end{eqnarray}
Indeed, by the definition of the associate norm, the Fubini theorem and
the H\"{o}lder inequality, one has, for every $g\in \mathcal{M}(0,
\infty )$,
\begin{eqnarray*}
\left \|  t^{{\frac{\gamma }{n}}}g^{**}(t)\right \|  _{Y'(0,\infty )}
&=&
\sup _{\|h\|_{Y(0,\infty )}\leq 1}
\int _{0}^{\infty }|h(t)|t^{\frac{
\gamma }{n}-1}\int _{0}^{t}g^{*}(s)\mathrm{
d}s\,\mathrm{
d}t
\\
&=&
\sup _{\|h\|_{Y(0,\infty )}\leq 1}
\int _{0}^{\infty }g^{*}(s)\int
_{s}^{\infty }|h(t)|t^{\frac{\gamma }{n}-1}\mathrm{
d}t\,\mathrm{
d}s
\\
&\leq &\sup _{\|h\|_{Y(0,\infty )}\leq 1}
\|g\|_{X(0,\infty )}\left \|
\int _{s}^{\infty }|h(t)|t^{\frac{\gamma }{n}-1}\,\mathrm{
d}t\right \|  _{X'(0,\infty )}.
\end{eqnarray*}
Now, the equivalence of \reftext{\eqref{T:Lenka-unrestricted}} and \reftext{\eqref{T:Lenka-nonincreasing}} implies that
\begin{eqnarray*}
\sup _{\|h\|_{Y(0,\infty )}\leq 1}\left \|  \int _{s}^{\infty }|h(t)|t
^{\frac{\gamma }{n}-1}\,\mathrm{
d}t\right \|  _{X'(0,\infty )}
\leq C
\sup _{\|h\|_{Y(0,\infty )}
\leq 1}\left \|  \int _{s}^{\infty }h^{*}(t)t^{\frac{\gamma }{n}-1}\,
\mathrm{
d}t\right \|  _{X'(0,\infty )}
= C.
\end{eqnarray*}
It might be instructive to note that while this estimate, of course,
follows from (b), the validity of (b) is in fact not necessary in order
to get it. Altogether, combining the estimates, we get
%
\begin{eqnarray}
\label{E:one-inequality}
\left \|  t^{{\frac{\gamma }{n}}}g^{**}(t)\right \|  _{Y'(0,\infty )}
\leq C \|g\|_{X(0,\infty )} , \quad g\in
\mathcal{M}(0,\infty ).
\end{eqnarray}

In order to prove~\reftext{\eqref{E:next-claim}}, we now need to show the converse
inequality to~\reftext{\eqref{E:one-inequality}}. Denote
\begin{eqnarray*}
\|g\|_{Z(0,\infty )}=\left \|
t^{{\frac{\gamma }{n}}}g^{**}(t)\right \|  _{Y'(0,\infty )},\quad g
\in \mathcal{M}(0,\infty ).
\end{eqnarray*}
The functional $g\mapsto \|g\|_{Z(0,\infty )}$ is
an~r.i.~norm. To see this, only (P4) needs proof,
since everything else is readily verified. Applying standard techniques,
(P4) reduces to
%
\begin{eqnarray}
\label{cond1:lenka}
t^{{\frac{\gamma }{n}}-1}\chi _{[1,\infty )}(t)\in Y'(0,\infty ).
\end{eqnarray}
But, using the equivalence of \reftext{\eqref{T:Lenka-unrestricted}} and \reftext{\eqref{T:Lenka-nonincreasing}} once again, we get
\begin{eqnarray*}
\|t^{{\frac{\gamma }{n}}-1}\chi _{[1,\infty )}(t)\|_{Y'(0,\infty )}
&=&
\sup \limits _{\|f\|_{Y(0,\infty )}\leq 1}\int _{0}^{\infty }|f(t)|t^{
{\frac{\gamma }{n}}-1}\chi _{[1,\infty )}(t)\,\mathrm{
d}t
\\
&=&\frac{1}{\|\chi _{(0,1)}\|_{X'(0,\infty )}}\sup
\limits _{\|f\|_{Y(0,\infty )}\leq 1}\left \|  \chi _{(0,1)}\int _{1}
^{\infty }|f(t)|t^{{\frac{\gamma }{n}}-1}\,\mathrm{
d}t\right \|  _{X'(0,\infty )}
\\
&\leq & \frac{1}{\|\chi _{(0,1)}\|_{X'(0,\infty )}}\sup
\limits _{\|f\|_{Y(0,\infty )}\leq 1}\left \|  \int _{s}^{\infty }|f(t)|t
^{{\frac{\gamma }{n}}-1}\,\mathrm{
d}t\right \|  _{X'(0,\infty )}
\\
&\le &\frac{C_2}{\|\chi _{(0,1)}\|_{X'(0,\infty )}}\sup
\limits _{\|f\|_{Y(0,\infty )}\leq 1}\left \|  \int _{s}^{\infty }f^{*}(t)t
^{{\frac{\gamma }{n}}-1}\,\mathrm{
d}t\right \|  _{X'(0,\infty )}
\\
&\le &\frac{C_2}{\|\chi _{(0,1)}\|_{X'(0,\infty )}} < \infty 
\end{eqnarray*}
for some appropriate positive constant $C_2$. We define the operator $R'$ by
\begin{eqnarray*}
R'g(t)=t^{\frac{\gamma }{n}-1}\int _{0}^{t}|g(s)|\,\mathrm{
d}s,\quad g\in \mathcal{M}(0,\infty ).
\end{eqnarray*}
Then
\begin{eqnarray*}
R'\colon Z(0,\infty )\to Y'(0,\infty ),
\end{eqnarray*}
since, by the Hardy--Littlewood inequality,
%
\begin{eqnarray}
\label{E:H-from-Y'-to-Z'}
\|R'g\|_{Y'(0,\infty )}\leq \|R'g^{*}\|_{Y'(0,\infty )}=\|g\|_{Z(0,
\infty )}, \quad g\in \mathcal{M}(0,\infty ).
\end{eqnarray}
We also have
%
\begin{eqnarray}
\label{E:H-from-Z'-to-Y}
R\colon Y(0,\infty )\to Z'(0,\infty ),
\end{eqnarray}
since, by the Fubini theorem, the H\"{o}lder inequality
and~\reftext{\eqref{E:H-from-Y'-to-Z'}}, one has
\begin{eqnarray*}
\|Rg\|_{Z'(0,\infty )}
&=&
\sup _{\|f\|_{Z(0,\infty )}\leq 1}\int _{0}
^{\infty }f(t)Rg(t)\,dt
=
\sup _{\|f\|_{Z(0,\infty )}\leq 1}\int _{0}
^{\infty }|f(t)|Rg(t)\,dt
\\
&=&
\sup _{\|f\|_{Z(0,\infty )}\leq 1}\int _{0}^{\infty }R'f(t)|g(t)|\,dt
\leq \|g\|_{Y(0,\infty )}\sup _{\|f\|_{Z(0,\infty )}\leq 1}\|R'f\|_{Y'(0,
\infty )}
\\
&\leq &\|g\|_{Y(0,\infty )}.
\end{eqnarray*}
But, as we know, $X'(0,\infty )$ is the optimal (smallest)
r.i.~target partner for $Y(0,\infty )$ with respect
to $R$. Consequently, it must be contained in $Z'(0,\infty )$.
By~\reftext{\eqref{E:equivalence-of-identities}}, this means that $Z(0,\infty )$
is continuously embedded into $X(0,\infty )$. In other words, there
exists a~positive constant, $C'$, such that
%
\begin{eqnarray}
\label{E:converse-inequality}
\|g\|_{X(0,\infty )}\leq C' \|g\|_{Z(0,\infty )}=C'\left \|  t^{{\frac{
\gamma }{n}}}g^{**}(t)\right \|  _{Y'(0,\infty )},\quad
g\in \mathcal{M}(0,\infty );
\end{eqnarray}
hence~\reftext{\eqref{E:next-claim}} follows from the combination
of~\reftext{\eqref{E:one-inequality}} and~\reftext{\eqref{E:converse-inequality}}.

Now we know that $X(0,\infty )=Z(0,\infty )$, so in order to prove (a)
it suffices to show that $T_{\frac{\gamma }{n}}\colon Z(0,\infty )
\to Z(0,\infty )$. In other words, we claim that there exists a~positive
constant $C$ such that
%
\begin{eqnarray}
\label{E:last-claim}
\left \|
t^{{\frac{\gamma }{n}}}(T_{\frac{\gamma }{n}}g)^{**}(t)\right \|
_{Y'(0,\infty )}
\leq C
\left \|
t^{{\frac{\gamma }{n}}}g^{**}(t)\right \|  _{Y'(0,\infty )},\quad
g\in \mathcal{M}(0,\infty ).
\end{eqnarray}

We first recall that there exists a~positive constant $K$ depending only
on $n$ and $\gamma $ such that
%
\begin{eqnarray}
\label{E:doublestar-in}
(T_{\frac{\gamma }{n}}g)^{**}(t)
\leq K
T_{\frac{\gamma }{n}}(g^{**})(t)
, \quad g\in \mathcal{M}(0,\infty ),\quad t
\in (0,\infty ).
\end{eqnarray}
Indeed, this follows from~\citep[Lemma~4.1]{Mus:18}, where a more
general assertion is stated and proved.

Next, it follows from \reftext{Lemma~\ref{L:unsup}} that
\begin{eqnarray*}
\left \|
\sup _{t\leq s<\infty }s^{{\frac{\gamma }{n}}}g^{*}(s)\right \|  _{Y'(0,
\infty )}
\leq C
\left \|  t^{{\frac{\gamma }{n}}}g^{*}(t)\right \|
_{Y'(0,\infty )}, \quad g\in \mathcal{M}(0,\infty
).
\end{eqnarray*}
In particular, since $g^{**}$ is also nonincreasing, we have
%
\begin{eqnarray}
\label{E:for-double}
\left \|
\sup _{t\leq s<\infty }s^{{\frac{\gamma }{n}}}g^{**}(s)\right \|  _{Y'(0,
\infty )}
\leq C
\left \|  t^{{\frac{\gamma }{n}}}g^{**}(t)\right \|
_{Y'(0,\infty )}, \quad g\in \mathcal{M}(0,\infty
).
\end{eqnarray}
Thus, combining~\reftext{\eqref{E:doublestar-in}} and~\reftext{\eqref{E:for-double}}, we get
\begin{eqnarray*}
\left \|
t^{{\frac{\gamma }{n}}}(T_{\frac{\gamma }{n}}g)^{**}(t)\right \|
_{Y'(0,\infty )}
&\leq & K
\left \|  t^{{\frac{\gamma }{n}}}T_{\frac{
\gamma }{n}}(g^{**})(t)\right \|  _{Y'(0,\infty )}
=K
\left \|
\sup _{t\leq s<\infty }s^{{\frac{\gamma }{n}}}g^{**}(s)\right \|  _{Y'(0,
\infty )}
\\
&\leq& KC
\left \|  t^{{\frac{\gamma }{n}}}g^{**}(t)\right \|  _{Y'(0,
\infty )}, \quad g\in \mathcal{M}(0,\infty ),
\end{eqnarray*}
proving~\reftext{\eqref{E:last-claim}}. Hence (a) holds, as desired. The proof is
complete.
\end{proof}

Let us now turn our attention to proofs of the main results.

\begin{proof}[Proof of \reftext{Theorem~\ref{T:fractional-maximal-operator}}]
We begin by proving that $\sigma $ is an~r.i.~norm.
As in the proof of \reftext{Theorem~\ref{T:maximal-operator}}, only the triangle
inequality and axioms (P4) and (P5) have to be verified. The triangle
inequality follows by the same argument using measure-preserving
transformations as in \citep[Theorem~3.3]{T2}.

We shall verify the validity of (P4). Let $E\subset \mathbb{R}$ be a
measurable set with $|E|<\infty $ and let $h$ be such that $h\sim \chi
_{E}$. We infer that there is a measurable set $F\subset \mathbb{R}$
such that $h=\chi _{F}$ and $|F|=|E|$. Assume moreover that $|E|\ge 1$.
It follows from the regularity of the Lebesgue measure that there exists
an open set $G \supseteq F$ such that $|G|\le 2|F|$. Thus there are
disjoint intervals $(a_{k},b_{k})$ satisfying $|F| \le a_{k}$,
\begin{eqnarray*}
F\subseteq (0,|F|) \cup \bigcup _{k} (a_{k},b_{k})
\end{eqnarray*}
and
\begin{eqnarray*}
\sum _{k} (b_{k} - a_{k}) \le 2|F|.
\end{eqnarray*}
Then we have
\begin{eqnarray*}
\biggl \| \int _{t}^{\infty } h(s)\,s^{{\frac{\gamma }{n}}-1}\,\mathrm{
d}s \biggr \|_{X'(0,\infty )}
& \le &\biggl \| \int _{t}^{\infty }
\biggl (\chi _{(0,|F|)}(s) + \sum _{k} \chi _{(a_{k},b_{k})}(s) \biggr )\,s
^{{\frac{\gamma }{n}}-1}\,\mathrm{
d}s \biggr \|_{X'(0,\infty )}
\\
& \le &\biggl \| \int _{t}^{\infty } \chi _{(0,|F|)}(s)\,s^{{\frac{
\gamma }{n}}-1}\,\mathrm{
d}s \biggr \|_{X'(0,\infty )}
\\
&&  + \sum _{k} \biggl \| \int _{t}^{\infty } \chi _{(a_{k},b_{k})}(s)
\,s^{{\frac{\gamma }{n}}-1}\,\mathrm{
d}s \biggr \|_{X'(0,\infty )}
\\
& \le &{\frac{n}{\gamma }}|F|^{\frac{\gamma }{n}}\, \| \chi _{(0,|F|)}
\|_{X'(0,\infty )}
\\
&&  + \sum _{k} \biggl \| \chi _{(0,a_{k})}(t) \int _{a_{k}}^{b_{k}}
\,s^{{\frac{\gamma }{n}}-1}\,\mathrm{
d}s \biggr \|_{X'(0,\infty )}
\\
&&  + \sum _{k} \biggl \| \chi _{(a_{k},b_{k})}(t) \int _{t}^{b_{k}}
\,s^{{\frac{\gamma }{n}}-1}\,\mathrm{
d}s \biggr \|_{X'(0,\infty )}.
\end{eqnarray*}
Let us observe that, due to~\reftext{\eqref{E:fundamental-relation}},
\reftext{\eqref{E:fund}} is in fact equivalent to the existence of a constant
$C$ such that
%
\begin{eqnarray}
\label{cokl}
r^{{\frac{\gamma }{n}}-1} \|\chi _{(0,r)}\|_{X'(0,\infty )} \le C
, \quad r\in[1,\infty).
\end{eqnarray}
Next, using the monotonicity of $s^{{\frac{\gamma }{n}}-1}$ and \reftext{\eqref{cokl}}, we get (note that $a_{k}\geq 1$ is satisfied thanks to
$a_{k}\geq |F|$)
\begin{eqnarray*}
\biggl \| \chi _{(0,a_{k})}(t) \int _{a_{k}}^{b_{k}} \,s^{{\frac{\gamma
}{n}}-1}\,\mathrm{
d}s \biggr \|_{X'(0,\infty )}
\le a_{k}^{{\frac{\gamma }{n}}-1} \|
\chi _{(0,a_{k})} \|_{X'(0,\infty )}\, (b_{k}-a_{k})
\le C (b_{k} - a
_{k}).
\end{eqnarray*}
Note that $C$ is independent of $k$. Also,
\begin{eqnarray*}
\biggl \| \chi _{(a_{k},b_{k})}(t) \int _{t}^{b_{k}} \,s^{{\frac{\gamma
}{n}}-1}\,\mathrm{
d}s \biggr \|_{X'(0,\infty )}
&\le &\biggl \| \chi _{(a_{k},b_{k})}(t)
\int _{a_{k}}^{b_{k}} \,s^{{\frac{\gamma }{n}}-1}\,\mathrm{
d}s \biggr \|_{X'(0,\infty )}
\\
& \le &a_{k}^{{\frac{\gamma }{n}}-1} \|\chi _{(0,b_{k}-a_{k})} \|_{X'(0,
\infty )}\, (b_{k}-a_{k})
\\
& \le & a_{k}^{{\frac{\gamma }{n}}-1} \|\chi _{(0,a_{k})} \|_{X'(0,
\infty )}\, (b_{k}-a_{k})
\le C (b_{k} - a_{k}),
\end{eqnarray*}
where we, once again, used the
monotonicity,~\reftext{\eqref{E:fundamental-relation}},~\reftext{\eqref{cokl}} and
\begin{eqnarray*}
b_{k}-a_{k} \le |F| \le a_{k}.
\end{eqnarray*}
Therefore
\begin{eqnarray*}
\biggl \| \int _{t}^{\infty } h(s)\,s^{{\frac{\gamma }{n}}-1}\,\mathrm{
d}s \biggr \|_{X'(0,\infty )}
& \le &{\frac{n}{\gamma }}|F|^{\frac{
\gamma }{n}}\, \| \chi _{(0,|F|)} \|_{X'(0,\infty )}
+ 2C \sum _{k} (b_{k}-a
_{k})
\\
& \le & {\frac{n}{\gamma }}C |F| + 4C |F|
= C_{n,\gamma }|E|.
\end{eqnarray*}
Taking the supremum over all such $h$, we get
%
\begin{eqnarray}
\label{vlk}
\sigma (\chi _{E}) \le C_{n,\gamma }|E|.
\end{eqnarray}
If $E\subset \mathbb{R}^{n}$ has $|E|<1$, we get $\sigma (\chi
_{E})\le C_{n,\gamma }$ by the monotonicity of $\sigma $.

As for (P5), let $E$ be a measurable subset of $\mathbb{R}^{n}$ having
finite measure

and assume that $f\in L^{1}(E)$. Denote $r=|E|$ and set $h(s) = f^{*}(s-r)
\chi _{(r,2r)}(s)$. Then $f\sim h$ and
%
\begin{eqnarray}
%
\sigma (f)
& \ge &\biggl \| \int _{t}^{\infty } h(s)\,s^{{\frac{\gamma
}{n}}-1}\,\mathrm{
d}s \biggr \|_{X'(0,\infty )}
\nonumber\\
& =& \biggl \| \int _{t}^{\infty } f^{*}(s-r)\chi _{(r,2r)}(s)\,s^{
{\frac{\gamma }{n}}-1}\,\mathrm{
d}s \biggr \|_{X'(0,\infty )}
\nonumber\\
& \ge &\biggl \| \chi _{(0,r)}(t) \int _{r}^{2r} f^{*}(s-r)\,s^{{\frac{
\gamma }{n}}-1}\,\mathrm{
d}s \biggr \|_{X'(0,\infty )} \label{E:sigma-lower-bound} \\
& =& \| \chi _{(0,r)} \|_{X'(0,\infty )}
\int _{r}^{2r} f^{*}(s-r)\,s
^{{\frac{\gamma }{n}}-1}\,\mathrm{
d}s
\nonumber\\
& \ge &\| \chi _{(0,r)} \|_{X'(0,\infty )}
(2r)^{{\frac{\gamma }{n}}-1}
\int _{r}^{2r} f^{*}(s-r)\,\mathrm{
d}s
\nonumber\\
& \ge &C_{n,\gamma ,X}
\|f\|_{L^{1}(E)},\nonumber
%
\end{eqnarray}
and (P5) follows.

We now claim that $M_{\gamma }\colon X\to Y$. Assume that $g\in
\mathcal{M}_{+}(0,\infty )$. Define $f(x)=g(\omega _{n}|x|^{n})$ for
$x\in \mathbb{R}^{n}\setminus \{0\}$, where $\omega _{n}$ is the volume
of the $n$-dimensional unit ball. Then $f$ is defined almost everywhere
on $\mathbb{R}^{n}$ and one has $g\sim f$. Thus, by the definitions of
$\sigma $ and $Y$, we get
\begin{eqnarray*}
\left \|  \int _{t}^{\infty }g(s)s^{\frac{\gamma }{n}-1}\,\mathrm{
d}s\right \|  _{X'(0,\infty )}
\leq \sigma (f)=\|f\|_{Y'}=\|g\|_{Y'(0,
\infty )}.
\end{eqnarray*}
Since $g$ was arbitrary, we obtain by~\reftext{\eqref{E:duality-of-operators}},
\begin{eqnarray*}
\left \|  t^{\frac{\gamma }{n}-1}\int _{0}^{t}g(s)\,\mathrm{
d}s\right \|  _{Y(0,\infty )}\leq \|g\|_{X(0,\infty )},\quad
g\in \mathcal{M}_{+}(0,\infty ).
\end{eqnarray*}
Restricting this inequality to nonincreasing functions, we obtain that
\begin{eqnarray*}
\left \|  t^{\frac{\gamma }{n}}g^{**}(t)\right \|  _{Y(0,\infty )}
\leq \|g^{*}\|_{X(0,\infty )}, \quad g\in
\mathcal{M}_{+}(0,\infty ).
\end{eqnarray*}
Applying \reftext{Lemma~\ref{L:unsup}}, we get that there exists a~positive
constant $C$ such that
\begin{eqnarray*}
\left \|  \sup _{t\leq s<\infty }s^{\frac{\gamma }{n}}g^{**}(s)\right \|
_{Y(0,\infty )}\leq C\|g^{*}\|_{X(0,\infty )},\quad g\in \mathcal{M}_{+}(0,\infty ).
\end{eqnarray*}
Thus, by~\reftext{\eqref{E:upper-bound-for-fractional}}, one has
\begin{eqnarray*}
\|M_{\gamma }f\|_{Y}
&\leq & C\left \|  \sup _{t\leq s<\infty }s^{\frac{
\gamma }{n}}f^{**}(s)\right \|  _{Y(0,\infty )}
\\
&\leq & C\|f^{*}\|_{X(0,\infty )}=C\|f\|_{X}, \quad f\in X,
\end{eqnarray*}
whence $M_{\gamma }\colon X\to Y$.

We shall now prove the optimality of the space $Y$ in~\reftext{\eqref{E:MG}}.
Suppose that for some r.i.~space $Z$, one has
$M_{\gamma }\colon X\to Z$. Let $g$ be a nonincreasing function in
$\mathcal{M}_{+}(0,\infty )$. Then there exists a~function $f_{0}
\in L^{1}_{\operatorname{loc}}(\mathbb{R}^{n})$ such that $f_{0}
\sim g$ and~\reftext{\eqref{E:lower-bound-for-fractional}} holds. Since
$M_{\gamma }\colon X\to Z$, we have
\begin{eqnarray*}
\|(M_{\gamma }f_{0})^{*}\|_{Z(0,\infty )}\leq C\|f_{0}^{*}\|_{X(0,
\infty )}=C\|g^{*}\|_{X(0,\infty )}.
\end{eqnarray*}
By~\reftext{\eqref{E:lower-bound-for-fractional}}, this yields
\begin{eqnarray*}
\|\sup _{t\leq s<\infty }s^{\frac{\gamma }{n}}g^{**}(s)\|_{Z(0,\infty
)}\leq C\|g^{*}\|_{X(0,\infty )}.
\end{eqnarray*}
We emphasize that $C$ does not depend on $g$. The last estimate
trivially implies
\begin{eqnarray*}
\|t^{\frac{\gamma }{n}}g^{**}(t)\|_{Z(0,\infty )}\leq C\|g^{*}\|_{X(0,
\infty )}, \quad g\in \mathcal{M}_{+}(0,\infty ).
\end{eqnarray*}
Therefore, by the Hardy--Littlewood inequality, we obtain
\begin{eqnarray*}
\|t^{\frac{\gamma }{n}}Pg(t)\|_{Z(0,\infty )}\leq C\|g\|_{X(0,\infty
)}, \quad g\in \mathcal{M}_{+}(0,\infty ).
\end{eqnarray*}
By~\reftext{\eqref{E:duality-of-operators}}, this yields
\begin{eqnarray*}
\left \|  \int _{t}^{\infty }h(s)s^{\frac{\gamma }{n}-1}\,\mathrm{
d}s\right \|  _{X'(0,\infty )}\leq C\|h\|_{Z'(0,\infty )},\quad
h\in \mathcal{M}_{+}(0,\infty ).
\end{eqnarray*}
In particular, for every $f\in \mathcal{M}_{+}(\mathbb{R}^{n})$ and
$h\in \mathcal{M}_{+}(0,\infty )$ such that $h\sim f$, one has
%
\begin{eqnarray}
\left \|  \int _{t}^{\infty }h(s)s^{\frac{\gamma }{n}-1}\,\mathrm{
d}s\right \|  _{X'(0,\infty )}
& \leq & C\|h\|_{Z'(0,\infty )}= C\|h^{*}
\|_{Z'(0,\infty )}\nonumber\\
&=& C\|f^{*}\|_{Z'(0,\infty )}=C\|f\|_{Z'}. \label{E:sigma-Z-estimate}
\end{eqnarray}
Consequently,
\begin{eqnarray*}
\sigma (f)=\sup _{\atopfrac{h\sim f}{h\ge 0}}\left \|  \int _{t}^{\infty }h(s)s^{\frac{\gamma }{n}-1}\,
\mathrm{
d}s\right \|  _{X'(0,\infty )}\leq C\|f\|_{Z'}.
\end{eqnarray*}
By the definition of $Y$, this means that $Z'\hookrightarrow Y'$, or equivalently
$Y\hookrightarrow Z$, proving the optimality of $Y$ in~\reftext{\eqref{E:MG}}.

Finally, assume that~\reftext{\eqref{E:fund}} is not true and assume that
$M_{\gamma }\colon X\to Y$ for some r.i.~space
$Y$ over $\mathbb{R}^{n}$. Then it follows from the above that
%
\begin{eqnarray}
\label{E:contradiction}
\sup _{\atopfrac{h\sim f}{h\ge 0}}\left \|  \int _{t}^{\infty }h(s)s^{\frac{\gamma }{n}-1}\,
\mathrm{
d}s\right \|  _{X'(0,\infty )}\leq C\|f\|_{Y'}, \quad
f\in Y'.
\end{eqnarray}
Take any $f\in \mathcal{M}_{+}(\mathbb{R}^{n})$ satisfying $f^{*}=
\chi _{(0,1)}$ and let $h=\chi _{(b,1+b)}$ for some fixed but arbitrary
$b\in (1,\infty )$. Then $f\sim h$ and
\begin{eqnarray*}
\biggl \| \int _{t}^{\infty } h(s)\,s^{{\frac{\gamma }{n}}-1}\,\mathrm{
d}s \biggr \|_{X'(0,\infty )}
&= &\biggl \| \int _{t}^{\infty }
\chi _{(b,1+b)}(s) \,s^{{\frac{\gamma }{n}}-1}\,\mathrm{
d}s \biggr \|_{X'(0,\infty )}
\\
& \ge& \biggl \| \chi _{(0,b)}(t) \int _{b}^{1+b} s^{{\frac{\gamma }{n}}-1}
\,\mathrm{
d}s \biggr \|_{X'(0,\infty )}
\\
&=& \| \chi _{(0,b)} \|_{X'(0,\infty )} \int _{b}^{1+b} s^{{\frac{
\gamma }{n}}-1}\,\mathrm{
d}s
\\
& \ge &\frac{b}{\varphi _{X}(b)} (1+b)^{{\frac{\gamma }{n}}-1}
\ge 2^{
{\frac{\gamma }{n}}-1}\,\frac{b^{\frac{\gamma }{n}}}{\varphi _{X}(b)}.
\end{eqnarray*}
Since~\reftext{\eqref{E:fund}} is not satisfied, there exists a~sequence
$b_{k}\to \infty $ such that
\begin{eqnarray*}
\lim _{k\to \infty }
\frac{b_{k}^{\frac{\gamma }{n}}}{\varphi _{X}(b_{k})}=\infty .
\end{eqnarray*}
This implies that
\begin{eqnarray*}
\biggl \| \int _{t}^{\infty } h(s)\,s^{{\frac{\gamma }{n}}-1}\,\mathrm{
d}s \biggr \|_{X'(0,\infty )}=\infty .
\end{eqnarray*}
Since $\|f\|_{Y'}<\infty $ by (P4) for $Y'$, this
contradicts~\reftext{\eqref{E:contradiction}}. The proof is complete.
\end{proof}

\begin{proof}[Proof of \reftext{Theorem~\ref{T:fractional-corollary}}]
We shall first prove that $\tau $ is equivalent to the functional
in~\reftext{\eqref{E:sigma-frac-corollary}}. By the definition of the associate
space, we get
\begin{eqnarray*}
\left \|  \int _{t}^{\infty }f^{*}(s)s^{\frac{\gamma }{n}-1}\,\mathrm{
d}s\right \|  _{X'(0,\infty )}
=
\sup _{\|h\|_{X(0,\infty )}\leq 1}\int
_{0}^{\infty }h(t)\int _{t}^{\infty }f^{*}(s)s^{\frac{\gamma }{n}-1}\,
\mathrm{
d}s\,\mathrm{
d}t.
\end{eqnarray*}
Since the function $t\mapsto \int _{t}^{\infty }s^{\frac{\gamma }{n}-1}f
^{*}(s)\,\mathrm{
d}s$ is obviously nonincreasing on $(0,\infty )$ regardless of $f$, we
in fact have, by the corollary of the Hardy--Littlewood inequality
(see~\reftext{\eqref{E:corollary-of-HL}}),
\begin{eqnarray*}
\left \|  \int _{t}^{\infty }f^{*}(s)s^{\frac{\gamma }{n}-1}\,\mathrm{
d}s\right \|  _{X'(0,\infty )}
=
\sup _{\|h\|_{X(0,\infty )}\leq 1}\int
_{0}^{\infty }h^{*}(t)\int _{t}^{\infty }f^{*}(s)s^{\frac{\gamma }{n}-1}
\,\mathrm{
d}s\,\mathrm{
d}t.
\end{eqnarray*}
Thus, the Fubini theorem and the definition of $P$ yield
\begin{eqnarray*}
\left \|  \int _{t}^{\infty }f^{*}(s)s^{\frac{\gamma }{n}-1}\,\mathrm{
d}s\right \|  _{X'(0,\infty )}
=
\sup _{\|h\|_{X(0,\infty )}\leq 1}\int
_{0}^{\infty }f^{*}(s)(Ph^{*})(s)s^{\frac{\gamma }{n}}\,\mathrm{
d}s.
\end{eqnarray*}
The trivial pointwise estimate $h^{*}\leq T_{\frac{\gamma }{n}}h$
implies that $(Ph^{*})(s)\leq (PT_{\frac{\gamma }{n}}h)(s)$ for every
$h$ and every $s$. Hence, we obtain that
%
\begin{eqnarray}
\label{E:tau-lower-estimate}
\left \|  \int _{t}^{\infty }f^{*}(s)s^{\frac{\gamma }{n}-1}\,\mathrm{
d}s\right \|  _{X'(0,\infty )}\leq \tau (f).
\end{eqnarray}
To prove the converse inequality, let $K$ be the operator norm of
$T_{\frac{\gamma }{n}}$ on $X(0,\infty )$. Then, by the definition of
$\tau $, the Fubini theorem, and the H\"{o}lder inequality, we have
\begin{eqnarray*}
\tau (f)
&=&
\sup _{\|h\|_{X(0,\infty )}\leq 1}\int _{0}^{\infty }f^{*}(s)(PT
_{\frac{\gamma }{n}}h)(s)s^{\frac{\gamma }{n}}\,\mathrm{
d}s
=
\sup _{\|h\|_{X(0,\infty )}\leq 1}\int _{0}^{\infty }
(T_{\frac{
\gamma }{n}}h)(t)\int _{t}^{\infty }f^{*}(s)s^{\frac{\gamma }{n}-1}\,
\mathrm{
d}s\,\mathrm{
d}t
\\
&\leq &\sup _{\|h\|_{X(0,\infty )}\leq 1}
\|T_{\frac{\gamma }{n}}h\|
_{X(0,\infty )}\left \|  \int _{t}^{\infty }f^{*}(s)s^{
\frac{\gamma }{n}-1}\,\mathrm{
d}s\right \|  _{X'(0,\infty )}.
\end{eqnarray*}
By the definition of $K$, we arrive at
%
\begin{eqnarray}
\label{E:upper-bound-for-tau}
\tau (f)
\leq K
\left \|  \int _{t}^{\infty }f^{*}(s)s^{
\frac{\gamma }{n}-1}\,\mathrm{
d}s\right \|  _{X'(0,\infty )},
\end{eqnarray}
and the desired equivalence is established.

Now we shall prove that $\tau $ is an~r.i.~norm. We
first note that the function $s\mapsto s^{\frac{\gamma }{n}}(PT_{\frac{
\gamma }{n}}h)(s)$ is always nonincreasing on $(0,\infty )$, regardless
of $h$. This follows from the easily verified fact that the expression
$s^{\frac{\gamma }{n}}(PT_{\frac{\gamma }{n}}h)(s)$ is a~constant
multiple of the integral mean over the interval $(0,s)$ of the obviously
nonincreasing function $t\mapsto \sup _{t\leq y<\infty }y^{\frac{
\gamma }{n}}h^{*}(y)$ with respect to the measure $\mathrm{
d}\mu (t)=t^{-\frac{\gamma }{n}}\,\mathrm{
d}t$. Therefore,~\reftext{\eqref{E:subadditivity-of-doublestar}} and Hardy's lemma
yield
\begin{eqnarray*}
\tau (f+g)
&=&
\sup _{\|h\|_{X(0,\infty )}\leq 1}\int _{0}^{\infty }(f+g)^{*}(s)(PT
_{\frac{\gamma }{n}}h)(s)s^{\frac{\gamma }{n}}\,\mathrm{
d}s
\\
&\leq &\sup _{\|h\|_{X(0,\infty )}\leq 1}\int _{0}^{\infty }f^{*}(s)(PT
_{\frac{\gamma }{n}}h)(s)s^{\frac{\gamma }{n}}\,\mathrm{
d}s
+
\sup _{\|h\|_{X(0,\infty )}\leq 1}\int _{0}^{\infty }g^{*}(s)(PT
_{\frac{\gamma }{n}}h)(s)s^{\frac{\gamma }{n}}\,\mathrm{
d}s
\\
&=&
\tau (f)+\tau (g).
\end{eqnarray*}
All the other properties in (P1) as well as (P2), (P3) and (P6) are
readily verified. We shall show (P4). Let $E\subset (0,\infty )$ be of
finite measure and denote $a=|E|$. By~\reftext{\eqref{E:upper-bound-for-tau}}, one
has
\begin{eqnarray*}
\tau (\chi _{E})
&\leq & K
\left \|  \int _{t}^{\infty }\chi _{E}^{*}(s)s
^{\frac{\gamma }{n}-1}\,\mathrm{
d}s\right \|  _{X'(0,\infty )}=
K\left \|  \chi _{(0,a)}(t)\int _{t}
^{a}s^{\frac{\gamma }{n}-1}\,\mathrm{
d}s\right \|  _{X'(0,\infty )}
\\
&\leq & \frac{Kn}{ \gamma }a^{\frac{\gamma }{n}}\left \|
\chi _{(0,a)}\right \|  _{X'(0,\infty )},
\end{eqnarray*}
and so
\begin{eqnarray*}
\tau (\chi _{E})
\leq \frac{Kn}{ \gamma }a^{\frac{\gamma }{n}}\left \|
\chi _{(0,a)}(t)\right \|  _{X'(0,\infty )}<\infty
\end{eqnarray*}
by the property (P4) for $X'(0,\infty )$. It remains to verify (P5). Let
$f\in \mathcal{M}(\mathbb{R}^{n})$ and let $E\subset \mathbb{R}^{n}$ be
of finite positive measure. Denote $a=|E|$. Then, by the monotonicity of
the function
$s\mapsto s^{\frac{\gamma }{n}}(PT_{\frac{\gamma }{n}}h)(s)$ on
$(0,\infty )$, we have
\begin{eqnarray*}
\tau (f)
&=&
\sup _{\|h\|_{X(0,\infty )}\leq 1}\int _{0}^{\infty }f^{*}(s)(PT
_{\frac{\gamma }{n}}h)(s)s^{\frac{\gamma }{n}}\,\mathrm{
d}s
\geq \sup _{\|h\|_{X(0,\infty )}\leq 1}\int _{0}^{a}f^{*}(s)(PT_{\frac{
\gamma }{n}}h)(s)s^{\frac{\gamma }{n}}\,\mathrm{
d}s
\\
&\geq &\sup _{\|h\|_{X(0,\infty )}\leq 1}a^{\frac{\gamma }{n}}(PT_{\frac{
\gamma }{n}}h)(a)\int _{0}^{a}f^{*}(s)\,\mathrm{
d}s.
\end{eqnarray*}
Now let us take $h_{0}=\frac{\chi _{(0,a)}}{\|\chi _{(0,a)}\|_{X(0,
\infty )}}$. Then $\|h_{0}\|_{X(0,\infty )}=1$, whence
\begin{eqnarray*}
\sup _{\|h\|_{X(0,\infty )}\leq 1}a^{\frac{\gamma }{n}}(PT_{\frac{
\gamma }{n}}h)(a)
&\geq &a^{\frac{\gamma }{n}}(PT_{\frac{\gamma }{n}}h
_{0})(a)
\\
&=&
\frac{a^{\frac{\gamma }{n}-1}}{\|\chi _{(0,a)}\|_{X(0,\infty )}}a
^{\frac{\gamma }{n}}\int _{0}^{a}s^{-\frac{\gamma }{n}}\,\mathrm{
d}s=
\frac{n}{n-\gamma }\frac{a^{\frac{\gamma }{n}}}{\|\chi _{(0,a)}\|
_{X(0,\infty )}}.
\end{eqnarray*}
Altogether,
\begin{eqnarray*}
\int _{E}f(x)\,\mathrm{
d}x\leq \int _{0}^{a}f^{*}(s)\,\mathrm{
d}s\leq \frac{n-\gamma }{n}a^{-\frac{\gamma }{n}}\|\chi _{(0,a)}\|_{X(0,
\infty )}\tau (f),
\end{eqnarray*}
and (P5) follows. We have shown that $\tau $ is an
r.i.~norm. This entitles us to take $Y=Y(\tau ')$.

We now claim that $M_{\gamma }\colon X\to Y$.
By~\reftext{\eqref{E:tau-lower-estimate}} and since $\tau (f)=\|f\|_{Y'}$ for
every $f\in \mathcal{M}_{+}(\mathbb{R}^{n})$, we have
%
\begin{eqnarray}
\label{E:sigma-Y-estimate}
\left \|  \int _{t}^{\infty }f^{*}(s)s^{\frac{\gamma }{n}-1}\,\mathrm{
d}s\right \|  _{X'(0,\infty )}\leq \|f\|_{Y'},\quad f\in \mathcal{M}_{+}(\mathbb{R}^{n}).
\end{eqnarray}
Let $g\in \mathcal{M}_{+}(0,\infty )$ be nonincreasing. We define
$f(x)=g(\omega _{n}|x|^{n})$ for $x\in \mathbb{R}^{n}\setminus \{0\}$,
where $\omega _{n}$ is the volume of the $n$-dimensional unit ball. Then
$f$ is defined almost everywhere on $\mathbb{R}^{n}$ and one has
$g\sim f$. Therefore,~\reftext{\eqref{E:sigma-Y-estimate}} implies that
\begin{eqnarray*}
\left \|  \int _{t}^{\infty }g(s)s^{\frac{\gamma }{n}-1}\,\mathrm{
d}s\right \|  _{X'(0,\infty )}
\leq \|g\|_{Y'(0,\infty )}
\end{eqnarray*}
for every nonincreasing $g\in \mathcal{M}_{+}(0,\infty )$.
Using the equivalence of \reftext{\eqref{T:Lenka-unrestricted}} and \reftext{\eqref{T:Lenka-nonincreasing}} with the (nondecreasing) function
$I(s)=s^{1-\frac{\gamma }{n}}$, $s\in (0,\infty )$, we obtain that there
exists a~positive constant $C$ such that
\begin{eqnarray*}
\left \|  \int _{t}^{\infty }g(s)s^{\frac{\gamma }{n}-1}\,\mathrm{
d}s\right \|  _{X'(0,\infty )}\leq C\|g\|_{Y'(0,\infty )},\quad
g\in \mathcal{M}_{+}(0,\infty ).
\end{eqnarray*}
By~\reftext{\eqref{E:duality-of-operators}}, this in turn gives
\begin{eqnarray*}
\left \|  t^{\frac{\gamma }{n}-1}\int _{0}^{t}g(s)\,\mathrm{
d}s\right \|  _{Y(0,\infty )}\leq C\|g\|_{X(0,\infty )},\quad
g\in \mathcal{M}_{+}(0,\infty ).
\end{eqnarray*}
Restricting this inequality to nonincreasing functions, we obtain that
\begin{eqnarray*}
\left \|  t^{\frac{\gamma }{n}}g^{**}(t)\right \|  _{Y(0,\infty )}
\leq C\|g^{*}\|_{X(0,\infty )}, \quad g\in
\mathcal{M}_{+}(0,\infty ).
\end{eqnarray*}
Applying~\reftext{Lemma~\ref{L:unsup}}, we get that there exists a~(possibly
different) positive constant $C$ such that
\begin{eqnarray*}
\left \|  \sup _{t\leq s<\infty }s^{\frac{\gamma }{n}}g^{**}(s)\right \|
_{Y(0,\infty )}\leq C\|g^{*}\|_{X(0,\infty )},\quad g\in \mathcal{M}_{+}(0,\infty ).
\end{eqnarray*}
Thus, by~\reftext{\eqref{E:upper-bound-for-fractional}}, one has
\begin{eqnarray*}
\|M_{\gamma }f\|_{Y}
&\leq C\left \|  \sup _{t\leq s<\infty }s^{\frac{
\gamma }{n}}f^{**}(s)\right \|  _{Y(0,\infty )}\leq C\|f^{*}\|_{X(0,
\infty )}=C\|f\|_{X}, \quad f\in X,
\end{eqnarray*}
whence $M_{\gamma }\colon X\to Y$.

It remains to prove the optimality of the space $Y$. Assume that
$M_{\gamma }\colon X \to Z$ for some r.i.~space
$Z$ over $\mathbb{R}^{n}$. Then~\reftext{\eqref{E:sigma-Z-estimate}} holds thanks
to the same argument as in the proof of
\reftext{Theorem~\ref{T:fractional-maximal-operator}}, that is,
\begin{eqnarray*}
\left \|  \int _{t}^{\infty }h(s)s^{\frac{\gamma }{n}-1}\,\mathrm{
d}s\right \|  _{X'(0,\infty )}
\leq C
\|f\|_{Z'},\quad h\sim f.
\end{eqnarray*}
Since $f^{*}\sim f$, this yields, in particular,
\begin{eqnarray*}
\left \|  \int _{t}^{\infty }f^{*}(s)s^{\frac{\gamma }{n}-1}\,\mathrm{
d}s\right \|  _{X'(0,\infty )}
\leq C
\|f\|_{Z'}.
\end{eqnarray*}
This estimate combined with~\reftext{\eqref{E:upper-bound-for-tau}} yields
\begin{eqnarray*}
\tau (f)\leq KC\|f\|_{Z'},\quad f\in \mathcal{M}_{+}(\mathbb{R}^{n}).
\end{eqnarray*}
As $\tau (f)=\|f\|_{Y'}$, this means that $Z'\hookrightarrow Y'$, or
$Y\hookrightarrow Z$, proving the optimality of $Y$. The proof is
complete.
\end{proof}

\begin{proof}[Proof of \reftext{Theorem~\ref{T:fractional-maximal-operator-GLZ}}]
Note that ${L^{p,q;\mathbb{A}}}$ is equivalent to
an~r.i.~space under any of the assumptions thanks to
\citep[Theorem~7.1]{OP}.

Let us first treat the cases when $T_{{\frac{\gamma }{n}}}\colon
{L^{p,q;\mathbb{A}}}(0,\infty )\to {L^{p,q;\mathbb{A}}}(0,\infty )$. To
this end we have to investigate when there exists a positive constant
$C>0$ such that
%
\begin{eqnarray}
\label{E:onestar}
\left \|
t^{-{\frac{\gamma }{n}}}\sup _{t\le s<\infty } s^{\frac{
\gamma }{n}}f^{*}(s)
\right \|  _{L^{p,q;\mathbb{A}}}\leq C
\|f\|_{L
^{p,q;\mathbb{A}}}, \quad f\in \mathcal{M}_{+}(
\mathbb{R}^{n}).
\end{eqnarray}
We first consider the case when $q=\infty $. Then~\reftext{\eqref{E:onestar}}
reads as
%
\begin{eqnarray}
\label{E:twostar}
\sup _{0<t<\infty } t^{\frac{1}{p}-{\frac{\gamma }{n}}} \ell ^{
\mathbb{A}}(t)
\sup _{t\leq s<\infty } s^{\frac{\gamma }{n}}f^{*}(s)
\leq C
\sup _{0<t<\infty } t^{\frac{1}{p}} \ell ^{\mathbb{A}}(t)f^{*}(t).
\end{eqnarray}
One has
\begin{eqnarray*}
\sup _{0<t<\infty } t^{\frac{1}{p}-{\frac{\gamma }{n}}} \ell ^{
\mathbb{A}}(t)
& \sup _{t\leq s<\infty }s^{\frac{\gamma }{n}}f^{*}(s)
=
\sup _{0<t<\infty }t^{\frac{1}{p}-{\frac{\gamma }{n}}} \ell ^{
\mathbb{A}}(t)
\sup _{t\leq s<\infty }s^{\frac{1}{p}}
\ell ^{\mathbb{A}}(s) f^{*}(s)s^{{\frac{\gamma }{n}}-\frac{1}{p}}
\ell ^{-{\mathbb{A}}}(s)
\\
& \leq \left (\sup _{0<t<\infty }
t^{\frac{1}{p}} \ell ^{\mathbb{A}}(t)
f^{*}(t) \right )
\left (\sup _{0<t<\infty }
t^{\frac{1}{p}-{\frac{
\gamma }{n}}} \ell ^{\mathbb{A}}(t)
\sup _{t\leq s<\infty } s^{{\frac{
\gamma }{n}}-\frac{1}{p}}
\ell ^{-{\mathbb{A}}}(s)\right ).
\end{eqnarray*}
Thus,~\reftext{\eqref{E:twostar}} is obviously satisfied if $s\mapsto s^{{\frac{
\gamma }{n}}-\frac{1}{p}}\ell ^{-{\mathbb{A}}}(s)$ is equivalent to
a~nonincreasing function. This happens precisely if either $p<{\frac{n}{
\gamma }}$ or $p={\frac{n}{\gamma }}$, $\alpha _{0}\leq 0$ and
$\alpha _{\infty }\geq 0$. It is easy to see that in all the remaining
cases, that is when either $p>{\frac{n}{\gamma }}$ or $p={\frac{n}{
\gamma }}$ and $\alpha _{0}> 0$, or $p={\frac{n}{\gamma }}$,
$\alpha _{0}\leq 0$ and $\alpha _{\infty }<0$, the
inequality~\reftext{\eqref{E:twostar}} is false as one can observe by plugging the
function $f^{*}=\chi _{(0,a)}$ into the inequality for $a\in (0,1)$ or
for $a\in (1,\infty )$, respectively.

Now let us consider the case when $q<\infty $. We recall that
then~\reftext{\eqref{E:onestar}} reads as
%
\begin{eqnarray}
\label{E:tristar}
\left ( \int _{0}^{\infty }
t^{-\frac{q\gamma }{n}+\frac{q}{p}-1}
\ell ^{{\mathbb{A}}q}(t)
\sup _{t\leq s<\infty } s^{\frac{q\gamma }{n}}f
^{*}(s)^{q}\,\mathrm{
d}t
\right )^{\frac{1}{q}}
\leq C
\left ( \int _{0}^{\infty }f^{*}(t)^{q}
t^{\frac{q}{p}-1} \ell ^{{\mathbb{A}}q}(t) \,\mathrm{
d}t
\right )^{\frac{1}{q}}
\end{eqnarray}
for some $C>0$ and all $f\in \mathcal{M}_{+}(\mathbb{R}^{n})$.
By~\citep[Theorem~3.2]{GOP},~\reftext{\eqref{E:tristar}} holds if and only if
there exists a constant $K$ such that, for every $\tau \in (0,\infty
)$,
%
\begin{eqnarray}
\label{E:pes}
\tau ^{{\frac{\gamma }{n}}}
\left ( \int _{0}^{\tau }
t^{-\frac{q\gamma
}{n}+\frac{q}{p} - 1}\ell ^{{\mathbb{A}}q}(t)\,\mathrm{
d}t
\right )^{\frac{1}{q}}
\le K
\left ( \int _{0}^{\tau }
t^{
\frac{q}{p}-1}\ell ^{{\mathbb{A}}q}(t)\,\mathrm{
d}t
\right )^{\frac{1}{q}}.
\end{eqnarray}
Elementary calculation shows that~\reftext{\eqref{E:pes}} holds if and only if
$1\leq p<\frac{n}{\gamma }$. Adding all conditions together we infer
that $T_{\frac{\gamma }{n}}$ is bounded on the r.i. space ${L^{p,q;
\mathbb{A}}}(0,\infty )$ if and only if one of the conditions
(\ref{E:fractional_p1}a), (\ref{E:fractional_easy}b) or
(\ref{E:fractional_infty1}c) holds.

We are thus in a~position to use \reftext{Theorem~\ref{T:fractional-corollary}}
in these cases, hence the optimal range $Y$ for the space ${L^{p,q;
\mathbb{A}}}$ with respect to $M_{\gamma }$ satisfies
\begin{eqnarray*}
\|f\|_{Y'}
= \left \|  \int _{t}^{\infty } f^{*}(s)s^{{\frac{\gamma }{n}}-1}
\,\mathrm{
d}s
\right \|  _{({L^{p,q;\mathbb{A}}})'}.
\end{eqnarray*}
Now we have by~\citep[Theorems~6.2 and~6.6]{OP} that $({L^{p,q;
\mathbb{A}}})'=L^{p',q';-{\mathbb{A}}}$, so we in fact get
\begin{eqnarray*}
\|f\|_{Y'}
= \left \|  \int _{t}^{\infty }f^{*}(s)s^{{\frac{\gamma }{n}}-1}
\,\mathrm{
d}s
\right \|  _{L^{p',q';-{\mathbb{A}}}},
\end{eqnarray*}
that is,
\begin{eqnarray*}
\|f\|_{Y'}
= \left \|
t^{\frac{1}{p'}-\frac{1}{q'}}\ell ^{-{\mathbb{A}}}(t) \int _{t}^{\infty
}f^{*}(s)s^{{\frac{\gamma }{n}}-1}\,\mathrm{
d}s
\right \|  _{L^{q'}(0,\infty )}.
\end{eqnarray*}
When $p=1$, $q=1$, $\alpha _{0}\geq 0$ and $\alpha _{\infty }\leq 0$, this
establishes the assertion in the case (\ref{E:fractional_infty1}c). In
the particular case ${\mathbb{A}}=[0,0]$ we have
\begin{eqnarray*}
\|f\|_{Y'}
= \sup _{0<t<\infty } \ell ^{-{\mathbb{A}}}(t)
\int _{t}^{
\infty }f^{*}(s)s^{{\frac{\gamma }{n}}-1}\,\mathrm{
d}s
= \int _{0}^{\infty }f^{*}(s)s^{{\frac{\gamma }{n}}-1}\,\mathrm{
d}s
= \|f\|_{L^{{\frac{n}{\gamma }},1}},
\end{eqnarray*}
hence $Y=L^{\frac{n}{n-\gamma },\infty }$. To prove the assertion, our
next step will be to simplify the expression for $\|f\|_{Y'}$ if one of
the conditions (\ref{E:fractional_p1}a) or (\ref{E:fractional_easy}b)
holds. We start with the lower bound. One has, by monotonicity of
$f^{*}$, the change of variables and elementary estimates,
\begin{eqnarray*}
\|f\|_{Y'}
& \geq &\left \|  t^{\frac{1}{p'}-\frac{1}{q'}}
\ell ^{-{\mathbb{A}}}(t)
\int _{t}^{2t}f^{*}(s)s^{{\frac{\gamma }{n}}-1}
\,\mathrm{
d}s
\right \|  _{L^{q'}(0,\infty )}
\\
& \geq &c
\left \|  t^{\frac{1}{p'}-\frac{1}{q'}+{\frac{\gamma }{n}}}
\ell ^{-{\mathbb{A}}}(t)
f^{*}(2t)
\right \|  _{L^{q'}(0,\infty )}
\\
& \geq& c'
\left \|  t^{\frac{1}{p'}-\frac{1}{q'}+{\frac{\gamma }{n}}}
\ell ^{-{\mathbb{A}}}(t)
f^{*}(t)
\right \|  _{L^{q'}(0,\infty )}
\\
& =& c' \|f\|_{L^{r',q';-{\mathbb{A}}}},
\end{eqnarray*}
where $c,c'$ are positive constants independent of $f$ and $r$ is such
that $\tfrac{1}{r'} = \tfrac{1}{p'}+{\frac{\gamma }{n}}$. We shall show
however that the converse inequality holds as well. First let $q=1$.
Then
\begin{eqnarray*}
\|f\|_{Y'}
& = &\sup _{0<t<\infty }
t^{\frac{1}{p'}}
\ell ^{-{\mathbb{A}}}(t) \int _{t}^{\infty } f^{*}(s)s^{\frac{1}{p'}+
{\frac{\gamma }{n}}}\ell ^{-{\mathbb{A}}}(s)
s^{-\frac{1}{p'}-1}
\ell ^{{\mathbb{A}}}(s)\,\mathrm{
d}s
\\
& \le &\|f\|_{L^{r',q';-{\mathbb{A}}}}
\sup _{0<t<\infty } t^{
\frac{1}{p'}} \ell ^{-{\mathbb{A}}}(t)
\int _{t}^{\infty } s^{-
\frac{1}{p'}-1} \ell ^{{\mathbb{A}}}(s)\,\mathrm{
d}s
\\
& \approx& \|f\|_{L^{r',q';-{\mathbb{A}}}}.
\end{eqnarray*}
Now assume that $1<q\leq \infty $. Then, by the classical Hardy
inequality (see e.g.~\citep{Mu}), we get that there exists a positive
constant $C$ such that
\begin{eqnarray*}[ll]
\left \|  t^{\frac{1}{p'}-\frac{1}{q'}} \ell ^{-{\mathbb{A}}}(t)
\int
_{t}^{\infty }g(s)\,\mathrm{
d}s
\right \|  _{L^{q'}(0,\infty )} \\
\qquad \leq C
\left \|  t^{\frac{1}{p'}-
\frac{1}{q'}+1} \ell ^{-{\mathbb{A}}}(t)
g(t)
\right \|  _{L^{q'}(0,
\infty )}, \quad g\in \mathcal{M}_{+}(0,\infty ).
\end{eqnarray*}
Given $f\in \mathcal{M}$, we set
$g(t)=f^{*}(t)t^{{\frac{\gamma }{n}}-1}$, $t\in (0,\infty )$, which
leads to
\begin{eqnarray*}
\|f\|_{Y'}
\leq C
\|f\|_{L^{r',q';-{\mathbb{A}}}},
\end{eqnarray*}
hence, altogether, $Y'=L^{r',q';-{\mathbb{A}}}$. Since $1<r'<\infty $,
we have, by~\citep[Theorems~6.2 and 6.6]{OP}, that
$Y=L^{r,q;{\mathbb{A}}}$, establishing the assertion.

We shall now treat the case~(\ref{E:fractional_infty2}e). The general
formula follows directly by \reftext{\eqref{E:sigma-frac}} of
\reftext{Theorem~\ref{T:fractional-maximal-operator}} and the definition of the
norm of ${L^{p,q;\mathbb{A}}}$. Note that since $T_{\frac{\gamma }{n}}$
is not bounded on ${L^{p,q;\mathbb{A}}}$ in this case, the supremum in \reftext{\eqref{E:sigma-frac}} is essential and cannot be avoided by setting
$h=f^{*}$ as follows from \reftext{Theorem~\ref{T:lenka}}.

Let us now focus on the special case when ${\mathbb{A}}=[0,0]$. We
denote the optimal partner for $L^{{\frac{n}{\gamma }},q}$ with respect
to $M_{\gamma }$ by $Y$. Our aim is to show that $Y=L^{\infty }$ or,
equivalently, that $Y'=L^{1}$. We first notice that $L^{1}$ is (up to
equivalence) the only r.i. space whose fundamental function, denoted
by $\psi $, satisfies $\psi (t)=t$. Indeed, assume that $X$ has such
a~fundamental function. Then
\begin{eqnarray*}
\|f\|_{\Lambda (X)}=\int _{0}^{\infty }f^{*}(t)\mathrm{
d}\psi (t)=\|f\|_{L^{1}}
\end{eqnarray*}
and
\begin{eqnarray*}
\|f\|_{M(X)}=\sup _{t\in (0,\infty )}\psi (t)t^{**}(t)=
\sup _{t\in (0,\infty )}\int _{0}^{t}f^{*}(s)\mathrm{
d}s=\|f\|_{L^{1}}.
\end{eqnarray*}
Consequently, by~\citep[Chapter~2, Theorem~5.13]{BS}, we have
$\Lambda (X)=X=M(X)$, hence $X=L^{1}$. Therefore, it is enough to verify
that the fundamental function of $Y'$, $\varphi $, say, satisfies
$\varphi (t)\approx t$ for $t\in (0,\infty )$. As for the proof of the
lower bound, we make use of the same calculation as in \reftext{\eqref{E:sigma-lower-bound}} with $f=\chi _{E}$ and $|E|=t$. We obtain
\begin{eqnarray*}
\|\chi _{E}\|_{Y'}
\ge C_{n,\gamma } \,t^{\frac{\gamma }{n}}\|\chi _{(0,t)}
\|_{\bigl (L^{{\frac{n}{\gamma }},q}(0,\infty )\bigr )'},
\quad t\in(0,\infty) ,
\end{eqnarray*}
which, thanks to \reftext{\eqref{E:fundamental-relation}}, can be rewritten as
\begin{eqnarray*}
\varphi (t)
\ge C_{n,\gamma } \frac{t^{1+{\frac{\gamma }{n}}}}{\|
\chi _{(0,t)}\|_{L^{{\frac{n}{\gamma }},q}(0,\infty )}},
\quad t\in(0,\infty) ,
\end{eqnarray*}
and the estimate then follows since the fundamental function of
$L^{{\frac{n}{\gamma }},q}$ is $t^{\frac{\gamma }{n}}$. To prove the
converse inequality, let us use the same upper bound which appears in
the proof of the validity of (P4) in the proof of
\reftext{Theorem~\ref{T:fractional-maximal-operator}}. Observe that \reftext{\eqref{cokl}} now holds on the whole of $(0,\infty )$ and hence we get \reftext{\eqref{vlk}} also for all sets $E$ with $|E|<1$. That gives the desired
relation $\varphi (t)\le C_{n,\gamma} t$, $t\in (0,\infty )$.
\end{proof}

\section{The Hilbert transform} \label{sec5}

\noindent
A very important example of a~singular integral with odd kernel is the
\textit{Hilbert transform}, defined for
appropriate functions on $\mathbb{R}$ by
\begin{eqnarray*}
Hf(x) = \lim _{\varepsilon \rightarrow 0_{+}}\frac{1}{\pi }
\int _{|x-t|\geq \varepsilon }\frac{f(t)}{x-t}\,\mathrm{
d}t.
\end{eqnarray*}
This operator is defined for every function $f\colon \mathbb{R}\to
\mathbb{R}$ for which the integral converges almost everywhere. The
Hilbert transform arises in the study of boundary values of the real and
imaginary parts of analytic functions. It is a cornerstone of several
important disciplines including real and complex analysis and the theory
of PDEs. In this section we shall study its sharp boundedness properties
on r.i.~spaces over $\mathbb{R}$. A key technical
background tool will be the \textit{Stieltjes transform}, $S$, which is
defined for every nonnegative measurable function $f$ on $(0,\infty )$
by
\begin{eqnarray*}
(Sf)(t)
=
\frac{1}{t}\int _{0}^{t}f(s)\,\mathrm{
d}s+\int _{t}^{\infty }f(s)\frac{\mathrm{
d}s}{s}, \quad t\in (0,\infty ).
\end{eqnarray*}
It might be useful to note that
%
\begin{eqnarray}
\label{E:comparison-of-S-P-and-Q}
S=P+Q=P\circ Q=Q\circ P.
\end{eqnarray}

Whenever we say that the Hilbert transform is bounded from a
function space $X$ to a function space $Y$, we implicitly assume that
$H$ is well defined for every $f\in X$, that is, $f\in L^{1}_{
\operatorname{loc}}(\mathbb{R})$ and the limit in the definition of
$Hf$ exists for a.e.~$x\in \mathbb{R}$. Let us recall that,
by~\citep[Chapter~3, Theorem~4.8]{BS}, a~sufficient condition for the
existence of this limit, for a given $\mathcal{M}(
\mathbb{R})$, is
%
\begin{eqnarray}
\label{E:2.2}
(Sf^{*})(1) < \infty .
\end{eqnarray}

Our main result in this section reads as follows.

\begin{theorem}
\label{T:hilbert-transforms}
Let $X$ be an~r.i.~space over $\mathbb{R}$ such that
%
\begin{eqnarray}
\label{E:eta-satisfied}
\eta \in X'(0,\infty ),
\end{eqnarray}
where
%
\begin{eqnarray}
\label{E:definition-of-w}
\eta (t)=\chi _{(0,1]}(t)(1-\log t)+\chi _{(1,\infty )}(t)\frac{1}{t},
\ t\in (0,\infty ).
\end{eqnarray}
Define the functional $\sigma $ by
\begin{eqnarray*}
\sigma (f)=\left \|  Sf^{*}\right \|  _{X'(0,\infty )}, \quad f\in
\mathcal{M}_{+}(\mathbb{R}).
\end{eqnarray*}
Then $\sigma $ is an~r.i.~norm and
%
\begin{eqnarray}
\label{E:boundedness-hilbert}
H\colon X\to Y,
\end{eqnarray}
where $Y=Y(\sigma ')$. Moreover, $Y$ is the optimal
\textup{(}smallest\textup{)} r.i.~space for
which~\reftext{\eqref{E:boundedness-hilbert}} holds.

Conversely, if~\reftext{\eqref{E:eta-satisfied}} is not true, then there does not
exist an~r.i.~space $Y$ for
which~\reftext{\eqref{E:boundedness-hilbert}} holds.
\end{theorem}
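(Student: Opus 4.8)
The plan is to mimic the proof of Theorem~\ref{T:maximal-operator}, with the operator $Q$ replaced by the Stieltjes transform $S=P+Q$, transferring all conclusions from $S$ to $H$ by means of the two classical rearrangement estimates: $(Hf)^{*}(t)\le c'\,(Sf^{*})(t)$ for every admissible $f$, and, for each nonincreasing $g$, the existence of $f_{0}\in L^{1}_{\operatorname{loc}}(\mathbb{R})$ with $f_{0}^{*}=g$ a.e.\ and $(Hf_{0})^{*}(t)\ge c\,(Sg)(t)$. First I would verify that $\sigma$ is an r.i.~norm; only the triangle inequality and the axioms (P4), (P5) need attention. Since $Sf^{*}=f^{**}+Qf^{*}$ is nonincreasing, \eqref{E:corollary-of-HL} and a double use of the Fubini theorem (exploiting that $P$ and $Q$ are formally adjoint with respect to the $L^{1}$-pairing, so that $S$ is formally self-adjoint) give the representation $\sigma(f)=\sup\bigl\{\int_{0}^{\infty}f^{*}(s)(Sh^{*})(s)\,\mathrm{d}s:\|h\|_{X(0,\infty)}\le1\bigr\}$; as $Sh^{*}$ is nonincreasing, \eqref{E:subadditivity-of-doublestar} together with the Hardy lemma yields $\int_{0}^{\infty}(f+g)^{*}(Sh^{*})\le\int_{0}^{\infty}f^{*}(Sh^{*})+\int_{0}^{\infty}g^{*}(Sh^{*})$ for each such $h$, whence $\sigma(f+g)\le\sigma(f)+\sigma(g)$. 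For (P4) I would compute $S\chi_{(0,a)}(t)=\min\{1,a/t\}+\chi_{(0,a)}(t)\log(a/t)$, observe that $D_{a}\bigl(S\chi_{(0,a)}\bigr)=\eta$, and use the boundedness of the dilation operator on $X'(0,\infty)$ to get $\sigma(\chi_{E})=\|S\chi_{(0,|E|)}\|_{X'(0,\infty)}\approx\|\eta\|_{X'(0,\infty)}<\infty$ by~\eqref{E:eta-satisfied}. Finally, $\sigma(f)\ge\|Pf^{*}\|_{X'(0,\infty)}=\|f^{**}\|_{X'(0,\infty)}\ge f^{**}(|E|)\,\|\chi_{(0,|E|)}\|_{X'(0,\infty)}$, which together with the Hardy--Littlewood inequality~\eqref{E:HL} gives (P5).

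With $\sigma$ an r.i.~norm we may take $Y=Y(\sigma')$, so that $\|f\|_{Y'}=\sigma(f)$ and $\|g\|_{Y'(0,\infty)}=\|Sg^{*}\|_{X'(0,\infty)}$. To prove~\eqref{E:boundedness-hilbert} I would split $S=P+Q$: from $Qg^{*}\le Sg^{*}$ one has $\|Qg^{*}\|_{X'(0,\infty)}\le\|g\|_{Y'(0,\infty)}$, and the equivalence of~\eqref{T:Lenka-unrestricted} and~\eqref{T:Lenka-nonincreasing} (with $I(s)=s$) upgrades this to $\|Qg\|_{X'(0,\infty)}\le C\|g\|_{Y'(0,\infty)}$ for every $g$; for the Hardy part, the pointwise bound $Pg(t)\le Pg^{*}(t)=g^{**}(t)\le(Sg^{*})(t)$ gives $\|Pg\|_{X'(0,\infty)}\le\|g\|_{Y'(0,\infty)}$ directly. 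Hence $\|Sg\|_{X'(0,\infty)}\le C\|g\|_{Y'(0,\infty)}$ for all $g$, that is $S\colon Y'(0,\infty)\to X'(0,\infty)$; by the formal self-adjointness of $S$ (see~\eqref{E:comparison-of-S-P-and-Q} and~\eqref{E:duality-of-operators}) this is equivalent to $S\colon X(0,\infty)\to Y(0,\infty)$, so restricting to nonincreasing functions $\|Sf^{*}\|_{Y(0,\infty)}\le C\|f\|_{X}$. Assumption~\eqref{E:eta-satisfied} forces $(Sf^{*})(1)<\infty$ for every $f\in X$ (the tail $\int_{1}^{\infty}f^{*}(s)s^{-1}\,\mathrm{d}s$ is dominated, via the H\"{o}lder inequality, by $\|f\|_{X}\|\eta\|_{X'(0,\infty)}$), so $Hf$ is well defined for $f\in X$ by~\eqref{E:2.2}, and the classical bound $(Hf)^{*}\le c'\,Sf^{*}$ then gives $\|Hf\|_{Y}\le c'\|Sf^{*}\|_{Y(0,\infty)}\le C\|f\|_{X}$.

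For the optimality of $Y$, assume $H\colon X\to Z$. Invoking the classical lower bound one obtains $\|Sg\|_{Z(0,\infty)}\le C\|g\|_{X(0,\infty)}$ on the cone of nonincreasing functions; the same ``fall of a star'' as above (the equivalence of~\eqref{T:Lenka-unrestricted} and~\eqref{T:Lenka-nonincreasing} for the $Q$ part, the pointwise bound for the $P$ part) followed by the self-adjointness of $S$ yields $\|Sg\|_{X'(0,\infty)}\le C\|g\|_{Z'(0,\infty)}$ for all $g$. Taking $g=f^{*}$, this reads $\|f\|_{Y'}=\sigma(f)=\|Sf^{*}\|_{X'(0,\infty)}\le C\|f\|_{Z'}$, i.e.\ $Z'\hookrightarrow Y'$, equivalently $Y\hookrightarrow Z$ by~\eqref{E:equivalence-of-identities}. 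For the converse statement, suppose~\eqref{E:eta-satisfied} fails but $H\colon X\to Y$ for some $Y$; applying the lower bound to the test function $\chi_{(0,1)}$ (legitimate since $(S\chi_{(0,1)})(1)=1<\infty$) and running the same chain produces $\|\eta\|_{X'(0,\infty)}=\|S\chi_{(0,1)}\|_{X'(0,\infty)}\le C\|\chi_{(0,1)}\|_{Y'(0,\infty)}$, whose right-hand side is finite by (P4) for $Y'$ while its left-hand side is infinite --- a contradiction, so no such $Y$ exists.

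The step I expect to be the main obstacle is the handling of the non-pointwise operator $Q$ in the passages to and from the cone of nonincreasing functions: in contrast with $P$, there is no pointwise inequality $Qg\le Qg^{*}$, so both the boundedness and the optimality genuinely rest on the deep equivalence of~\eqref{T:Lenka-unrestricted} and~\eqref{T:Lenka-nonincreasing}. A secondary point, absent for $M$, is the bookkeeping around the well-definedness of $Hf$: one must keep track of the sufficient condition~\eqref{E:2.2}, which is guaranteed for every $f\in X$ by~\eqref{E:eta-satisfied} on the positive side and holds for $\chi_{(0,1)}$ on the negative side.
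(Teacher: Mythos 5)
Your argument is correct and follows the same overall architecture as the paper: reduce $H$ to the Stieltjes transform $S$ via the two rearrangement estimates (the paper isolates this as Lemma~\ref{L:comparison-of-hilbert-and-stieltjes}), verify that $\sigma$ is an r.i.~norm, and dualize (the paper's Theorem~\ref{T:stieltjes-transform}). The one place where you genuinely diverge is the step you yourself flag as the main obstacle. You split $S=P+Q$, handle $P$ pointwise, and invoke the deep equivalence of \eqref{T:Lenka-unrestricted} and \eqref{T:Lenka-nonincreasing} to remove the star from the $Q$-part, both for the boundedness $S\colon Y'\to X'$ and in the optimality chain. The paper instead exploits the composition identity $S=Q\circ P$ from \eqref{E:comparison-of-S-P-and-Q}: since $P|g|\le Pg^{*}$ pointwise by the Hardy--Littlewood inequality \eqref{E:HL} and $Q$ is monotone, one gets the \emph{pointwise} bound $S|g|\le Sg^{*}$, so $\|Sg\|_{X'(0,\infty)}\le\|Sg^{*}\|_{X'(0,\infty)}=\|g\|_{Y'(0,\infty)}$ is immediate from the definition of $Y'$ and no star ever has to ``fall''---this is why the paper can call the step trivial, in contrast with the situation for $M$, where only $Q$ (not $S$) appears. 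Your route is valid but imports heavy machinery where an elementary pointwise inequality suffices. A second, minor difference: for the triangle inequality the paper uses $Sf^{*}=Qf^{**}$ together with the subadditivity \eqref{E:subadditivity-of-doublestar-a} of $f\mapsto f^{**}$, whereas you rerun the duality-plus-Hardy-lemma argument from the proof of Theorem~\ref{T:maximal-operator}; both work. Your bookkeeping of the well-definedness condition \eqref{E:2.2} (guaranteed by H\"older and \eqref{E:eta-satisfied} on the positive side, checked directly for $\chi_{(0,1)}$ on the negative side) matches what the paper leaves implicit.
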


For the optimal domain, we have the following result. Again, the proof
is analogous to the appropriate proofs above, and therefore omitted.

\begin{theorem}
Let $Y$ be an~r.i.~space over $\mathbb{R}$ such that
%
\begin{eqnarray}
\label{E:eta-condition-domain}
\eta \in Y(0,\infty ),
\end{eqnarray}
where $\eta $ is the function from~\reftext{\eqref{E:definition-of-w}}. Define the
functional $\sigma $ by
\begin{eqnarray*}
\sigma (f)=\left \|  Sf^{*}\right \|  _{Y(0,\infty )}, \quad f\in
\mathcal{M}_{+}(\mathbb{R}).
\end{eqnarray*}
Then $\sigma $ is an~r.i.~norm and
%
\begin{eqnarray}
\label{E:hilbert-bounded-domain}
H\colon X\to Y,
\end{eqnarray}
where $X=X(\sigma )$. Moreover, $X$ is the optimal
\textup{(}biggest\textup{)} r.i.~space for
which~\reftext{\eqref{E:hilbert-bounded-domain}} holds.

Conversely, if~\reftext{\eqref{E:eta-condition-domain}} is not true, then there
does not exist an~r.i.~space $X$ for
which~\reftext{\eqref{E:hilbert-bounded-domain}} holds.
\end{theorem}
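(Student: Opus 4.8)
The fastest route is to deduce the first three assertions from Theorem~\ref{T:hilbert-transforms}, applied with the associate space $Y'$ playing the role of its ``$X$''. With that substitution the hypothesis of Theorem~\ref{T:hilbert-transforms} reads $\eta\in (Y')'(0,\infty)=Y(0,\infty)$, which is exactly \eqref{E:eta-condition-domain}, and the functional it produces, $f\mapsto\|Sf^{*}\|_{(Y')'(0,\infty)}=\|Sf^{*}\|_{Y(0,\infty)}$, is precisely our $\sigma$. Thus Theorem~\ref{T:hilbert-transforms} already gives that $\sigma$ is an r.i.~norm and that the optimal (smallest) range partner of $Y'$ with respect to $H$ is $Y(\sigma')$, whose associate space is $Y\bigl((\sigma')'\bigr)=Y(\sigma)=X$; in other words $H\colon Y'\to X'$ and $X'$ is the smallest such target. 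The Hilbert transform is antisymmetric, $H'=-H$ in the sense of \eqref{E:duality-of-operators}, and $H$ is well defined on $Y'$: for $g\in Y'$ one has, by H\"older's inequality on $(0,\infty)$ together with $\chi_{(1,\infty)}(s)/s\le\eta(s)$ and (P4) for $Y$,
\[
(Sg^{*})(1)=\int_{0}^{1}g^{*}(s)\,\mathrm{d}s+\int_{1}^{\infty}g^{*}(s)\,\frac{\mathrm{d}s}{s}\le\|g\|_{Y'}\Bigl(\|\chi_{(0,1)}\|_{Y(0,\infty)}+\|\eta\|_{Y(0,\infty)}\Bigr)<\infty,
\]
so $Hg$ exists a.e.\ by \eqref{E:2.2}. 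Hence the equivalence following \eqref{E:duality-of-operators} converts $H\colon Y'\to X'$ into \eqref{E:hilbert-bounded-domain} and, via \eqref{E:equivalence-of-identities}, converts the minimality of $X'$ among range partners of $Y'$ into the maximality of $X$ among domain partners of $Y$.

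For the remaining (converse) assertion, suppose $\eta\notin Y(0,\infty)$ and, for contradiction, that $H\colon Z\to Y$ for some r.i.~space $Z$ over $\mathbb{R}$. Taking $f=\chi_{(0,1)}$ we have $\|f\|_{Z}=\|\chi_{(0,1)}\|_{Z(0,\infty)}<\infty$ by (P4) for $Z$, so $Hf$ is well defined and $\|Hf\|_{Y}\le C\|f\|_{Z}<\infty$. But a direct computation gives $H\chi_{(0,1)}(x)=\tfrac1\pi\log\bigl|\tfrac{x}{x-1}\bigr|$, which blows up logarithmically near $x=0$ and near $x=1$ and decays like $\tfrac1{\pi|x|}$ at infinity; consequently $(H\chi_{(0,1)})^{*}(t)\approx\log\tfrac1t$ as $t\to0^{+}$ and $\approx\tfrac1t$ as $t\to\infty$, i.e.\ $(H\chi_{(0,1)})^{*}\approx\eta$. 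Therefore $\|Hf\|_{Y}=\|(Hf)^{*}\|_{Y(0,\infty)}\approx\|\eta\|_{Y(0,\infty)}=\infty$, a contradiction, so no domain partner exists.

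One can also argue directly, mirroring the proof of Theorem~\ref{T:maximal-operator-domain} with $S$ replacing the operator $f\mapsto f^{**}$ and with the two-sided control of $(Hf)^{*}$ by $Sf^{*}$ (established in the proof of Theorem~\ref{T:hilbert-transforms}) replacing \eqref{E:herz}. This needs a direct check that $\sigma$ is an r.i.~norm: (P2), (P3), (P6) and all of (P1) apart from the triangle inequality follow at once from the corresponding properties of $Y(0,\infty)$ and from $Sf^{*}\ge f^{**}\ge f^{*}$; the triangle inequality is obtained by the duality device from the proof of Theorem~\ref{T:maximal-operator}, using that $S$ is self-adjoint with respect to the $L^{1}$-pairing (since $S=P+Q$ with $P,Q$ mutually adjoint, see \eqref{E:comparison-of-S-P-and-Q}) and that $Sh^{*}$ is nonincreasing, so that \eqref{E:corollary-of-HL}, the Fubini theorem, \eqref{E:subadditivity-of-doublestar} and Hardy's lemma combine exactly as there; (P4) follows from the identity $S\chi_{(0,a)}(t)=\eta(t/a)$, whence $\|S\chi_{E}\|_{Y(0,\infty)}=\|D_{1/|E|}\eta\|_{Y(0,\infty)}\le\max\{1,|E|\}\,\|\eta\|_{Y(0,\infty)}<\infty$ by \eqref{E:eta-condition-domain} and the boundedness of the dilation operator --- the one place the hypothesis enters; and (P5) follows from $\sigma(f)\ge\|f^{**}\|_{Y(0,\infty)}$ exactly as in Theorem~\ref{T:maximal-operator-domain}. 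Then \eqref{E:hilbert-bounded-domain} holds because, for $f\in X$, $\|Sf^{*}\|_{Y(0,\infty)}=\|f\|_{X}<\infty$ forces $(Sf^{*})(1)<\infty$ (as $Sf^{*}$ is nonincreasing and $Y(0,\infty)$ embeds locally into $L^{1}$), so $Hf$ is defined by \eqref{E:2.2} and the upper bound yields $\|Hf\|_{Y}\le C\|f\|_{X}$; and optimality follows from the lower bound: for nonincreasing $g$ one finds $f_{0}\sim g$ with $(Hf_{0})^{*}(t)\ge c\,(Sg)(t)$, so $H\colon Z\to Y$ gives $c\,\|Sg\|_{Y(0,\infty)}\le\|Hf_{0}\|_{Y}\le C\,\|f_{0}\|_{Z}=C\,\|g\|_{Z(0,\infty)}$, i.e.\ $Z\hookrightarrow X$.

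I expect the main obstacle to be organisational rather than analytic. The genuinely hard inputs --- the rearrangement equivalence between $H$ and $S$ and the behaviour of $H\chi_{(0,1)}$ --- are inherited from Theorem~\ref{T:hilbert-transforms}, so the real work is (i) recognising that our $\sigma$ is exactly the functional of Theorem~\ref{T:hilbert-transforms} after passing to the associate space, which uses $(Y')'=Y$ and the fact that the representation space of an associate space is the associate of the representation space, and (ii) threading the well-definedness convention for $H$ through the duality --- verifying that $\eta\in Y(0,\infty)$ is precisely what makes $-H$ a legitimate operator on $Y'$, so that the equivalence following \eqref{E:duality-of-operators} may be applied.
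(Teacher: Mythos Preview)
Your proposal is correct. The paper omits the proof entirely, stating only that it is ``analogous to the appropriate proofs above''; your second (direct) approach is exactly that analogy---it mirrors Theorem~\ref{T:maximal-operator-domain} with $S$ in place of $f\mapsto f^{**}$ and uses Lemma~\ref{L:comparison-of-hilbert-and-stieltjes} in the role of~\eqref{E:herz}, just as the paper intends.

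Your first approach, deducing everything from Theorem~\ref{T:hilbert-transforms} applied to $Y'$ and then dualising via $H'=-H$, is a genuinely different and rather elegant shortcut that the paper does not mention. It exploits the antisymmetry of $H$ to convert the optimal-range problem for $Y'$ directly into the optimal-domain problem for $Y$, avoiding the need to re-verify that $\sigma$ is an r.i.\ norm. The only cost is the care you correctly take with well-definedness of $H$ on both $X$ and $Y'$. One small simplification available to you in the direct route: the triangle inequality for $\sigma$ follows more quickly from the identity $Sf^{*}=Qf^{**}$ (see~\eqref{E:comparison-of-S-and-Q}) together with~\eqref{E:subadditivity-of-doublestar-a}, as in the proof of Theorem~\ref{T:stieltjes-transform}, rather than via the duality device you sketch. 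Your computation of $(H\chi_{(0,1)})^{*}\approx\eta$ for the converse assertion is also correct and is a clean concrete substitute for the more abstract argument the paper would presumably have given.
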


We provide several examples of the optimal range partners for
Lorentz-Zygmund spaces with respect to the Hilbert transform. The proof
is similar to that of \reftext{Theorem~\ref{T:-maximal-operator-GLZ}} and
therefore omitted.

\begin{theorem}
Assume that $p,q\in [1,\infty ]$, ${\mathbb{A}}\in \mathbb{R}^{2}$. Then
\begin{eqnarray*}
H\colon L^{p,q; {\mathbb{A}}}\to
\left\{
\begin{array}{l@{\quad }l}
L^{1,1; {\mathbb{A}}-1},
&p=1, q=1, \alpha _{0} \geq 1, \alpha _{
\infty }<0, \\
L^{p,q; {\mathbb{A}}},
&1<p<\infty , \\
Y,
& p=\infty , q=1, \alpha _{0} < -1, \alpha _{\infty }\ge 0~ or \\
&p = \infty , 1 < q <\infty , \alpha _{0} + \frac{1}{q} < 0,
\alpha _{\infty }+ \frac{1}{q'} > 0,\\
L^{\infty ,\infty ; {\mathbb{A}}-1},
&p=\infty , q=\infty , \quad \alpha
_{0} \leq 0, \alpha _{\infty }>1,
\end{array}\right.
\end{eqnarray*}
where $Y$ is defined by its associate space $Y'$ whose norm is given by
\begin{eqnarray*}
\|f\|_{Y'}=\left \|  \int _{t}^{\infty }f^{**}(s)\frac{\mathrm{
d}s}{s}\right \|  _{L^{(1,q';-{\mathbb{A}}-1)}}, \quad f\in \mathcal{M}
_{+}(\mathbb{R}).
\end{eqnarray*}
These spaces are the optimal range partners with respect to~$H$.
\end{theorem}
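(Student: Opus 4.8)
The plan is to deduce everything from Theorem~\ref{T:hilbert-transforms} by a case analysis running parallel to the proof of Theorem~\ref{T:-maximal-operator-GLZ}, but with the Stieltjes transform $S=P+Q$ in place of the operator $Q$. The first task is to check that in each of the listed parameter ranges the space $X=L^{p,q;\mathbb{A}}$ satisfies the admissibility hypothesis~\eqref{E:eta-satisfied}, i.e.\ $\eta\in X'(0,\infty)$, so that the theorem applies. For this one uses the associate-space formulae for (generalized) Lorentz--Zygmund spaces from~\citep{OP}---the ones already invoked in the proofs of Theorems~\ref{T:maximal-operator} and~\ref{T:-maximal-operator-GLZ}, namely $(L^{p,q;\mathbb{A}})'=L^{p',q';-\mathbb{A}}$ for $1<p<\infty$, $(L^{1,1;\mathbb{A}})'=L^{\infty,\infty;-\mathbb{A}}$, and, for $p=\infty$, the primitive associate $(L^{\infty,q;\mathbb{A}})'=L^{(1,q';-\mathbb{A}-1)}$---combined with the observation that $\eta(t)\approx\ell^{[1,0]}(t)$ on $(0,1)$ and $\eta(t)=1/t$ on $[1,\infty)$. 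Testing these two local behaviours (near $0$ and near $\infty$) against the norm of $X'$, together with the nontriviality requirement on $X$ itself, yields exactly the restrictions on $p,q,\mathbb{A}$ stated in the theorem, and also shows that~\eqref{E:eta-satisfied} fails in the complementary ranges. From this point on, Theorem~\ref{T:hilbert-transforms} hands us the optimal range $Y$ through $\|f\|_{Y'}=\|Sf^{*}\|_{X'(0,\infty)}$, so the only remaining work is to identify this functional with (the associate norm of) the concrete space claimed; optimality of $Y$ then needs no separate argument.

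In the three cases in which the range is again Lorentz--Zygmund ($1<p<\infty$; $p=q=1$ with $\alpha_0\ge1$, $\alpha_\infty<0$; $p=q=\infty$ with $\alpha_0\le0$, $\alpha_\infty>1$) I would evaluate $\|Sf^{*}\|_{X'}$ by splitting $S=P+Q$, so that $Sf^{*}=f^{**}+Qf^{*}$ with both summands nonincreasing, and estimating each term from above and below. The upper bounds are Hardy/Calder\'on-type inequalities on Lorentz--Zygmund spaces: when $1<p<\infty$ one has $1<p'<\infty$ and both $P$ and $Q$ are bounded on $L^{p',q';-\mathbb{A}}$ (via~\eqref{E:equivalence-of-P-Q} and \citep[Theorem~3.8]{OP}, or the classical weighted Hardy inequality used in the proof of Theorem~\ref{T:fractional-maximal-operator-GLZ}), whence $\|Sf^{*}\|_{X'}\approx\|f\|_{X'}$ and so $Y'=X'$, $Y=L^{p,q;\mathbb{A}}$; when $p=q=1$ the estimates rest on the elementary asymptotics $\int_{0}^{t}\ell^{\mathbb{B}}(s)\,\mathrm{d}s\approx t\,\ell^{\mathbb{B}}(t)$ and $\int_{t}^{\infty}\ell^{\mathbb{B}}(s)\,\frac{\mathrm{d}s}{s}\approx\ell^{\mathbb{B}+1}(t)$ (the latter whenever the integral converges), and they show that the $Q$-term dominates and $\|Sf^{*}\|_{X'}\approx\|f\|_{L^{\infty,\infty;-\mathbb{A}+1}}$; when $p=q=\infty$ the same asymptotics show, dually, that the $f^{**}$-term dominates and $\|Sf^{*}\|_{X'}\approx\|f\|_{L^{1,1;-\mathbb{A}+1}}$. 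The matching lower bounds are either trivial (when $Sf^{*}\ge f^{*}$ already suffices), or come from the same Fubini computation, or---when $p=q=1$---are obtained exactly as in the proof of Theorem~\ref{T:-maximal-operator-GLZ} by restricting the integral defining $Qf^{*}$ to $(t,2t)$, to $(t,\sqrt{t})$ near $0$, and to $(t,t^{2})$ near $\infty$, using the monotonicity of $f^{*}$. Feeding the identified $Y'$ back through the associate-space duality of~\citep[Theorems~6.2 and~6.6]{OP} then yields $Y=L^{1,1;\mathbb{A}-1}$, respectively $Y=L^{\infty,\infty;\mathbb{A}-1}$.

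There remain the intermediate cases $p=\infty$ with $q=1$ ($\alpha_0<-1$, $\alpha_\infty\ge0$) or $1<q<\infty$ ($\alpha_0+\tfrac1q<0$, $\alpha_\infty+\tfrac1{q'}>0$), in which no collapse to a Lorentz--Zygmund space takes place. Here $X'=L^{(1,q';-\mathbb{A}-1)}$ by the duality formulae of~\citep{OP}, and, since $S=Q\circ P$ by~\eqref{E:comparison-of-S-P-and-Q}, we have $Sf^{*}=\int_{t}^{\infty}f^{**}(s)\,\frac{\mathrm{d}s}{s}$; thus Theorem~\ref{T:hilbert-transforms} gives at once $\|f\|_{Y'}=\bigl\|\int_{t}^{\infty}f^{**}(s)\,\frac{\mathrm{d}s}{s}\bigr\|_{L^{(1,q';-\mathbb{A}-1)}}$, which is the announced space. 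That this operator-induced functional is not comparable to any $\varrho_{p,q;\mathbb{B}}$ (nor to any $\varrho_{(p,q;\mathbb{B})}$)---the remark following the theorem---is then verified by testing on the dilates $f^{*}=\chi_{(0,a)}$ and comparing growth in the parameter $a$. The main obstacle throughout is the bookkeeping with the broken logarithmic weights near $0$ and near $\infty$: getting the threshold exponents in~\eqref{E:eta-satisfied} and in the ``dominating term'' analysis exactly right, and, in the intermediate cases, pinning down the precise primitive-space norm and establishing its non-representability. These are precisely the technical points already met in Theorems~\ref{T:-maximal-operator-GLZ} and~\ref{T:fractional-maximal-operator-GLZ}.
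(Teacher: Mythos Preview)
Your proposal is correct and follows precisely the route the paper indicates: the paper omits the proof entirely, stating only that it ``is similar to that of Theorem~\ref{T:-maximal-operator-GLZ},'' and your sketch is exactly that---apply Theorem~\ref{T:hilbert-transforms} to obtain $\|f\|_{Y'}=\|Sf^{*}\|_{X'(0,\infty)}$, verify~\eqref{E:eta-satisfied} case by case via the associate-space formulae of~\citep{OP}, and then identify the resulting norm by the same Hardy/dilation estimates and broken-logarithm asymptotics used in the proof of Theorem~\ref{T:-maximal-operator-GLZ}. Your use of $S=Q\circ P$ (so $Sf^{*}=\int_{t}^{\infty}f^{**}(s)\,\frac{\mathrm{d}s}{s}$) to read off the intermediate $p=\infty$ cases directly is the natural shortcut here and matches the displayed formula in the statement.
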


At the end of this section, we aim to prove
\reftext{Theorem~\ref{T:hilbert-transforms}}. We start with a lemma which recalls
a well-known fact. We insert a short proof for the sake of completeness.

\begin{lemma}
\label{L:comparison-of-hilbert-and-stieltjes}
Let $X$ and $Y$ be r.i.~Banach function spaces over
$\mathbb{R}$. Assume that \reftext{\eqref{E:2.2}} is satisfied for every
$f\in X$. Then the Hilbert transform $H$ is bounded from $X$ to $Y$ if
and only if the Stieltjes transform $S$ is bounded from $X(0,\infty )$
to $Y(0,\infty )$.
\end{lemma}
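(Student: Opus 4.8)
The plan is to reduce the boundedness of $H$ to a pointwise (in the rearrangement sense) comparison with the Stieltjes transform. The key classical input is the O'Neil--Weiss-type estimate, recorded in \citep[Chapter~3, Theorem~4.8]{BS}, which asserts that for every $f\in L^{1}_{\operatorname{loc}}(\mathbb{R})$ for which \reftext{\eqref{E:2.2}} holds, the limit defining $Hf$ exists a.e.\ and moreover there are absolute constants $c_{1},c_{2}>0$ such that
\begin{eqnarray*}
(Hf)^{*}(t)\le c_{1}(Sf^{*})(t),\quad t\in(0,\infty),
\end{eqnarray*}
while conversely, given any nonincreasing $g\in\mathcal{M}_{+}(0,\infty)$ with $(Sg)(1)<\infty$, there is a function $f_{0}\in\mathcal{M}(\mathbb{R})$ with $f_{0}^{*}=g$ and
\begin{eqnarray*}
(Hf_{0})^{*}(t)\ge c_{2}(Sg)(t),\quad t\in(0,\infty).
\end{eqnarray*}
So the Hilbert transform is, up to multiplicative constants and up to this "worst rearrangement" phenomenon, the same as the Stieltjes transform acting on nonincreasing functions.

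First I would prove the easy direction: assume $S\colon X(0,\infty)\to Y(0,\infty)$. Given $f\in X$, the hypothesis guarantees \reftext{\eqref{E:2.2}}, so $Hf$ is well defined and $(Hf)^{*}\le c_{1}Sf^{*}$ pointwise on $(0,\infty)$. Since $Sf^{*}$ is nonincreasing (it is a sum of an average and a tail integral of a nonincreasing function), the lattice axiom (P2) of the r.i.\ norm of $Y$, together with the rearrangement invariance of $Y$ and the identification with its representation space, gives
\begin{eqnarray*}
\|Hf\|_{Y}=\|(Hf)^{*}\|_{Y(0,\infty)}\le c_{1}\|Sf^{*}\|_{Y(0,\infty)}\le c_{1}\|S\|\,\|f^{*}\|_{X(0,\infty)}=c_{1}\|S\|\,\|f\|_{X},
\end{eqnarray*}
which is precisely $H\colon X\to Y$.

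For the converse, suppose $H\colon X\to Y$ with norm $C$. I want $\|Sg\|_{Y(0,\infty)}\le C'\|g\|_{X(0,\infty)}$ for all $g\in\mathcal{M}_{+}(0,\infty)$; since both sides are rearrangement invariant in $g$ it suffices to take $g$ nonincreasing. If $(Sg)(1)=\infty$ then $g\notin X(0,\infty)$ (because membership in the r.i.\ space $X$ forces \reftext{\eqref{E:2.2}} by the standing assumption applied to a radially-type function equimeasurable with $g$, using that $\eta$-type conditions ensure $X'$, hence the embedding of $X$ into the relevant weighted $L^1$), so the inequality is vacuous; assume $(Sg)(1)<\infty$. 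Pick $f_{0}\in\mathcal{M}(\mathbb{R})$ with $f_{0}^{*}=g$ and $(Hf_{0})^{*}\ge c_{2}Sg$. Then $f_{0}\in X$ with $\|f_{0}\|_{X}=\|g\|_{X(0,\infty)}$, and by (P2) and rearrangement invariance of $Y$,
\begin{eqnarray*}
\|Sg\|_{Y(0,\infty)}\le c_{2}^{-1}\|(Hf_{0})^{*}\|_{Y(0,\infty)}=c_{2}^{-1}\|Hf_{0}\|_{Y}\le c_{2}^{-1}C\|f_{0}\|_{X}=c_{2}^{-1}C\|g\|_{X(0,\infty)}.
\end{eqnarray*}
This gives $S\colon X(0,\infty)\to Y(0,\infty)$.

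The main obstacle, and the only point requiring genuine care, is the converse direction's well-definedness bookkeeping: one must be sure that the function $f_{0}$ furnished by the lower estimate genuinely lies in $X$ and that $H f_0$ is defined, i.e.\ that the standing hypothesis "\reftext{\eqref{E:2.2}} holds for every $f\in X$" is compatible with feeding in an arbitrary nonincreasing $g\in X(0,\infty)$. This is exactly why the hypothesis of the lemma is phrased that way; I would simply invoke it, noting that $f_0$ is equimeasurable with $g$ and hence $(Sf_0^*)(1)=(Sg)(1)<\infty$ is the required finiteness. Everything else is a routine transfer between a space and its representation space via the lattice and rearrangement-invariance axioms.
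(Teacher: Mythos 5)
Your proof is correct and follows essentially the same route as the paper's: both directions rest on the two pointwise rearrangement comparisons between $H$ and $S$ from Bennett--Sharpley (\citep[Chapter~3, Theorem~4.8]{BS} for the upper bound and the lower bound behind \citep[Chapter~3, Proposition~4.10]{BS}), transferred between the spaces and their representation spaces via (P2) and rearrangement invariance. One minor caveat: the quantity $\|Sg\|_{Y(0,\infty )}$ is \emph{not} rearrangement invariant in $g$ (the tail part $Qg$ is not pointwise dominated by $Qg^{*}$), so your reduction to nonincreasing $g$ actually requires the Hardy--Littlewood inequality for the $P$-part and the equivalence of \reftext{\eqref{T:Lenka-unrestricted}} and \reftext{\eqref{T:Lenka-nonincreasing}} for the $Q$-part --- though the paper's own proof elides this step at exactly the same point.
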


\begin{proof}
Assume first that $H$ is bounded from $X$ to $Y$. Fix a~function
$f\in \mathcal{M}_{+}(0,\infty )$ such that~$(Sf^{*})(1)<\infty $. Then,
by a~simple modification of~\citep[Chapter~3, Proposition 4.10]{BS},
there exists a~function $g\in \mathcal{M}_{+}(\mathbb{R})$,
equimeasurable with $f$, such that
\begin{eqnarray*}
(Sf^{*})(t)\leq 2\pi \left (Hg\right )^{*}(t), \quad
t\in (0,\infty ).
\end{eqnarray*}
Thus, by the property (P2) of $Y$, we have
\begin{eqnarray*}
\|(Sf^{*})\|_{Y(0,\infty )}\leq 2\pi \|(Hg)^{*}\|_{Y(0,\infty )}.
\end{eqnarray*}
By the rearrangement invariance of $Y$, this turns into
\begin{eqnarray*}
\|(Sf^{*})\|_{Y(0,\infty )}\leq 2\pi \|Hg\|_{Y}.
\end{eqnarray*}
It follows from the boundedness of $H$ from $X$ to $Y$ that
\begin{eqnarray*}
\|Hg\|_{Y}\leq C\|g\|_{X}
\end{eqnarray*}
for some constant $C$, $0<C<\infty $, independent of $g$ (hence of
$f$). We thus get, altogether, using also the definition of the
representation space and the equimeasurability of $f$ and $g$, that
\begin{eqnarray*}
\|(Sf^{*})\|_{Y(0,\infty )}\leq 2C\pi \|g\|_{X}=2C\pi \|g^{*}\|_{X(0,
\infty )}=2C\pi \|f^{*}\|_{X(0,\infty )}.
\end{eqnarray*}
In other words, $S$ is bounded from $X(0,\infty )$ to $Y(0,\infty )$.

Conversely, assume that the Stieltjes transform is bounded from
$X(0,\infty )$ to $Y(0,\infty )$. By an appropriate modification
of~\citep[Chapter~3, Theorem~4.8]{BS}, there exists a positive constant
$C$ independent of $f$ such that
\begin{eqnarray*}
\left (Hf\right )^{*}(t)\leq C (Sf^{*})(t) , \quad t
\in (0,\infty ).
\end{eqnarray*}
We then get, similarly as above,
\begin{eqnarray*}
\|Hf\|_{Y}=\|(Hf)^{*}\|_{Y(0,\infty )}\leq C\|(Sf^{*})\|_{X(0,\infty
)}\leq C'\|f^{*}\|_{X(0,\infty )}=C'\|f\|_{X}
\end{eqnarray*}
for some suitable constant $C'$, proving that $H\colon X\to Y$. The
proof is complete.
\end{proof}

Our next step will be a characterization of the optimal range partner
with respect to the Stieltjes transform.

\begin{theorem}
\label{T:stieltjes-transform}
Let $X$ be an~r.i.~Banach function space over
$(0,\infty )$ such that
%
\begin{eqnarray}
\label{E:eta-satisfied2}
\eta \in X'(0,\infty ),
\end{eqnarray}
where $\eta $ is the function from~\reftext{\eqref{E:definition-of-w}}. Define the
functional $\sigma $ by
\begin{eqnarray*}
\sigma (f)=\left \|  Sf^{*}\right \|  _{X'(0,\infty )}, \quad f\in
\mathcal{M}_{+}(0,\infty ).
\end{eqnarray*}
Then $\sigma $ is an~r.i.~norm and
%
\begin{eqnarray}
\label{E:boundedness-stieltjes}
S\colon X\to Y,
\end{eqnarray}
where $Y=Y(\sigma ')$. Moreover, $Y$ is the optimal
\textup{(}smallest\textup{)} r.i.~space for
which~\reftext{\eqref{E:boundedness-stieltjes}} holds.

Conversely, if~\reftext{\eqref{E:eta-satisfied2}} is not true, then there does not
exist an~r.i.~space $Y$ for
which~\reftext{\eqref{E:boundedness-stieltjes}} holds.
\end{theorem}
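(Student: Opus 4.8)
The plan is to run the same machine as in the proof of Theorem~\ref{T:maximal-operator}, with the operator $Q$ replaced by the Stieltjes transform $S=P+Q$, exploiting that, by \eqref{E:comparison-of-S-P-and-Q}, $S$ is formally self-adjoint with respect to the $L^{1}$-pairing (because $P$ and $Q$ are mutually adjoint). First one verifies that $\sigma$ is an r.i.~norm. Rearrangement invariance is immediate from the definition, and the lattice and Fatou axioms follow from the monotone convergence theorem together with the monotonicity and positivity of the map $f\mapsto Sf^{*}$. For the triangle inequality I would argue exactly as in Theorem~\ref{T:maximal-operator}: since $Sf^{*}$ is nonincreasing for every $f$, \eqref{E:corollary-of-HL} gives $\sigma(f+g)=\sup_{\|h\|_{X(0,\infty)}\le 1}\int_{0}^{\infty}h^{*}(t)(S(f+g)^{*})(t)\,\mathrm{d}t$; the Fubini theorem and the self-adjointness of $S$ rewrite the inner integral as $\int_{0}^{\infty}(f+g)^{*}(s)(Sh^{*})(s)\,\mathrm{d}s$, and then \eqref{E:subadditivity-of-doublestar} and the Hardy lemma (applied with the nonincreasing function $Sh^{*}$) split this into the corresponding integrals for $f$ and $g$; folding $S$ back onto $h^{*}$ and using $\|h\|_{X(0,\infty)}\le1$ bounds these by $\sigma(f)$ and $\sigma(g)$ respectively, so that $\sigma(f+g)\le\sigma(f)+\sigma(g)$ after taking the supremum over $h$.

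For (P4), let $E$ have finite measure; then $\chi_{E}^{*}=\chi_{(0,|E|)}$ and a direct computation gives $(S\chi_{(0,|E|)})(t)=\min\{1,|E|/t\}+\chi_{(0,|E|)}(t)\log(|E|/t)$. Applying the dilation operator $D_{|E|}$, which is bounded on $X'(0,\infty)$, one checks that $D_{|E|}(S\chi_{(0,|E|)})$ is exactly the function $\eta$ of \eqref{E:definition-of-w}, so $\sigma(\chi_{E})<\infty$ is equivalent to the assumption \eqref{E:eta-satisfied2}. For (P5) I would use $Sf^{*}(t)\ge (Qf^{*})(t)\ge\int_{t}^{2t}f^{*}(s)\,\tfrac{\mathrm{d}s}{s}\ge f^{*}(2t)\log 2$, whence $\sigma(f)\ge(\log 2)\,\|f^{*}(2t)\|_{X'(0,\infty)}$; since $X'$ itself satisfies (P5) and the dilation is bounded on $X'(0,\infty)$, the required local $L^{1}$-bound follows precisely as in Theorem~\ref{T:maximal-operator}. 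The hypothesis \eqref{E:eta-satisfied2} also guarantees, via the Hölder inequality and the pointwise bound $\min\{1,1/s\}\le\eta(s)$, that $Sf$ is finite a.e.\ for every $f\in X$, so that $S$ is well defined on $X$.

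Next, to establish $S\colon X\to Y$ with $Y=Y(\sigma')$, note that $\|Sg^{*}\|_{X'(0,\infty)}=\|g\|_{Y'(0,\infty)}$ for every $g\in\mathcal{M}_{+}(0,\infty)$ by the definition of $Y$. For the $P$-part I use the pointwise inequality $Pg\le Pg^{*}$ (a consequence of \eqref{E:HL}), whence $\|Pg\|_{X'}\le\|Pg^{*}\|_{X'}\le\|Sg^{*}\|_{X'}=\|g\|_{Y'}$. For the $Q$-part the star cannot be removed by mere monotonicity (the remark after Theorem~\ref{T:maximal-operator} applies verbatim); instead, starting from the estimate $\|Qg\|_{X'}\le\|Sg^{*}\|_{X'}=\|g\|_{Y'}$ valid for nonincreasing $g$, the equivalence of \eqref{T:Lenka-nonincreasing} and \eqref{T:Lenka-unrestricted} with $I(s)=s$ upgrades it to $\|Qg\|_{X'}\le C\|g\|_{Y'}$ for all $g\in\mathcal{M}_{+}(0,\infty)$. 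Adding the two pieces, $\|Sg\|_{X'(0,\infty)}\le C'\|g\|_{Y'(0,\infty)}$; since $S$ is self-adjoint, the equivalence principle following \eqref{E:duality-of-operators} turns this into $S\colon(X')'\to(Y')'$, i.e.\ $S\colon X\to Y$ (and $|Sf|\le S|f|$ removes the restriction to nonnegative $f$). For optimality, assume $S\colon X\to Z$ for some r.i.~space $Z$; self-adjointness gives $S\colon Z'\to X'$, hence $\|Sg\|_{X'}\le C\|g\|_{Z'}$; specialising to a nonincreasing $g$ (no star-removal needed now), $\|g\|_{Y'}=\|Sg^{*}\|_{X'}\le C\|g\|_{Z'}$, so $Z'\hookrightarrow Y'$ and, by \eqref{E:equivalence-of-identities}, $Y\hookrightarrow Z$. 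Finally, if \eqref{E:eta-satisfied2} fails yet $S\colon X\to Y$ held for some r.i.~space $Y$, the same chain would yield $\|\eta\|_{X'(0,\infty)}=\|S\chi_{(0,1)}\|_{X'(0,\infty)}\le C\|\chi_{(0,1)}\|_{Y'(0,\infty)}<\infty$ by (P4) for $Y'$, which is absurd; hence no such $Y$ exists.

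I expect the genuine obstacle to be the \emph{fall of the star} for the $Q$-component in the third step, where one must pass from a monotone to an unrestricted inequality; this is precisely the deep point flagged in the proof of Theorem~\ref{T:maximal-operator}, and it hinges essentially on \eqref{T:Lenka-unrestricted}$\Leftrightarrow$\eqref{T:Lenka-nonincreasing}. A secondary point requiring care is the explicit identification $D_{|E|}(S\chi_{(0,|E|)})=\eta$ (equivalently $S\chi_{(0,1)}=\eta$), which is what makes both (P4) and the nonexistence part pivot exactly on \eqref{E:eta-satisfied2}; everything else is a faithful transcription of the argument already carried out for the Hardy--Littlewood maximal operator.
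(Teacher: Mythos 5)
Your proof is correct and follows essentially the same route as the paper: the self-adjointness of $S$ with respect to the $L^{1}$-pairing reduces the boundedness to $S\colon Y'\to X'$, optimality is obtained by dualizing and restricting to nonincreasing functions, and both (P4) and the nonexistence part pivot on the identity $S\chi_{(0,1)}=\eta$ tested against $\chi_{(0,1)}$, exactly as in the paper. The only real difference is the triangle inequality, which the paper gets in one line from $Sf^{*}=Qf^{**}$ combined with $(f+g)^{**}\le f^{**}+g^{**}$, whereas you rerun the duality/Hardy-lemma argument of Theorem~\ref{T:maximal-operator}; both work, and your explicit treatment of the star-removal for the $Q$-component of $S\colon Y'\to X'$ (a step the paper dismisses as following ``trivially from the definition of $Y'$'') is, if anything, the more careful account.
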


\begin{proof}
Consider the functional $\sigma (f)=\left \|  Sf^{*}\right \|  _{X'(0,
\infty )}$, $ f\in \mathcal{M}_{+}(0,\infty )$. We shall prove that
$\sigma $ is an~r.i.~norm. As in the proof of
\reftext{Theorem~\ref{T:maximal-operator}}, the axioms (P2), (P3) and (P6) for
$\sigma $ are clearly satisfied. The verification of the triangle
inequality is even easier than in the proof of
\reftext{Theorem~\ref{T:maximal-operator}}. It follows
from~\reftext{\eqref{E:comparison-of-S-P-and-Q}} that
%
\begin{eqnarray}
\label{E:comparison-of-S-and-Q}
Sf^{*}=Qf^{**}, \quad f\in \mathcal{M}(0,\infty ),
\end{eqnarray}
which in conjunction with~\reftext{\eqref{E:subadditivity-of-doublestar-a}}
immediately yields the triangle inequality for~$\sigma $. As usual, all
other properties in (P1) are readily verified. Also the verification of
(P5) is easy. In fact, it immediately follows from the analogous
property of the functional $\sigma $ from
\reftext{Theorem~\ref{T:maximal-operator}}, because,
by~\reftext{\eqref{E:comparison-of-S-and-Q}}, one has $Sf^{*}\geq Qf^{*}$. It only
remains to verify the validity of (P4). To this end, let $E\subset
\mathbb{R}$ be a set of finite measure. We need to prove that
$\left \|  S\chi _{E}^{*}\right \|  _{X'}<\infty $. Calculation shows
that this is equivalent to saying that $\eta \in X'$, a fact guaranteed
by the assumption. This shows (P4), and, consequently, it completes the
proof of the fact that $\sigma $ is an~r.i.~Banach
function norm.

We shall now prove that $S\colon X\to Y$. The operator $S$ is
self-adjoint with respect to the $L^{1}$-pairing in the sense that
\begin{eqnarray*}
\int _{0}^{\infty }(Sf)(t)g(t)\,\mathrm{
d}t=\int _{0}^{\infty }f(t)(Sg)(t)\,\mathrm{
d}t
\end{eqnarray*}
for every admissible $f$ and $g$. Hence, it suffices to prove that
$S\colon Y'\to X'$. That, however, follows trivially from the definition
of $Y'$.

The proof of optimality of the space $Y$ as well as that of the
nonexistence of an~r.i.~range partner for $X$ in case
$\eta \notin X'$ is completely analogous to its counterpart from the
proof of \reftext{Theorem~\ref{T:maximal-operator}} and hence is omitted.
\end{proof}

Finally, \reftext{Theorem~\ref{T:hilbert-transforms}} immediately follows from
\reftext{Theorem~\ref{T:stieltjes-transform}} and
\reftext{Lemma~\ref{L:comparison-of-hilbert-and-stieltjes}}.

\section{The Riesz potential} \label{sec6}

%
\begin{definition}
Let $0<\gamma <n$. Then the \textit{Riesz potential} of order
$\gamma $, $I_{\gamma }$, of a~measurable function $f$ on $\mathbb{R}
^{n}$ is defined by
\begin{eqnarray*}
(I_{\gamma }f)(x)=\int _{\mathbb{R}^{n}}f(y)\phi (x-y)\,\mathrm{
d}y, \quad x\in \mathbb{R}^{n},
\end{eqnarray*}
where
\begin{eqnarray*}
\phi (y)=c(\gamma )|y|^{\gamma -n},
\quad
c(\gamma )=\Gamma \left (\frac{n-\gamma }{2}\right )\left (\pi ^{
\frac{n}{2}}2^{\gamma }\Gamma \left (\frac{\gamma }{2}\right )\right )
^{-1}.
\end{eqnarray*}
\end{definition}

We are going to make use of a~special case of the
\textit{O'Neil inequality}. In its general form \citep[Lemma~1.5]{ON},
it states that, for the convolution of two measurable functions $f,g$
on $\mathbb{R}^{n}$, defined by
\begin{eqnarray*}
(f*g)(x)=\int _{\mathbb{R}^{n}}f(x-y)g(y)\,\mathrm{
d}y, \quad x\in \mathbb{R}^{n},
\end{eqnarray*}
we have
\begin{eqnarray*}
(f*g)^{**}(t)\leq tf^{**}(t)+\int _{t}^{\infty }f^{*}(s)g^{*}(s)\,
\mathrm{
d}s , \quad t\in (0,\infty ).
\end{eqnarray*}
With the particular choice
\begin{eqnarray*}
g(x)=|x|^{\gamma -n}, \quad x\in \mathbb{R}^{n},
\end{eqnarray*}
we obtain that
\begin{eqnarray*}
(I_{\gamma }f)^{*}(t)\leq C\int _{t}^{\infty }f^{**}(s)s^{\frac{\gamma
}{n}-1}\,\mathrm{
d}s , \quad t\in (0,\infty ),
\end{eqnarray*}
with some positive constant $C$, depending on $\gamma $ and $n$, but
independent of $f$ and $t$.

This inequality is known to be sharp, but merely in a broader sense
than, for example, the corresponding estimate for the Hardy--Littlewood
maximal operator. This was firstly observed by O'Neil in the final
remark of the paper~\citep{ON}, where it is pointed out that the
inequality can be reversed when $f,g$ are radially decreasing positive
functions. Furthermore, by an appropriately modified argument
from~\citep[Theorem~10.2(iii)]{EOP}), we get that, for every
$f\in \mathcal{M}(\mathbb{R}^{n})$, there exists a~function $g\in
\mathcal{M}(0,\infty )$ equimeasurable with $f$ such that
\begin{eqnarray*}
(I_{\gamma }g)^{*}(t)\geq c\int _{t}^{\infty }f^{**}(s)s^{\frac{\gamma
}{n}-1}\,\mathrm{
d}s , \quad t\in (0,\infty ),
\end{eqnarray*}
with some constant $c$, $0<c<\infty $, depending on $\gamma $ and
$n$, but independent of $f$ and $t$.

We shall now turn our attention to a~weighted version of the Stieltjes
transform, which plays a key role in the matter of optimal spaces for
the Riesz potential.

\begin{definition}
Let $\alpha \in (1,\infty )$. The
\textit{weighted Stieltjes transform}, $S_{\alpha }$, is defined for
every nonnegative measurable function $f$ on $(0,\infty )$ by
\begin{eqnarray*}
(S_{\alpha }f)(t)
=
t^{\frac{1}{\alpha }-1}\int _{0}^{t}f(s)\,\mathrm{
d}s+\int _{t}^{\infty }f(s)s^{\frac{1}{\alpha }-1}\, \mathrm{
d}s, \quad t\in (0,\infty ).
\end{eqnarray*}
\end{definition}

We note that, for every admissible $f$ and $t$, one has
\begin{eqnarray*}
(S_{\alpha }f)(t)=c_{\alpha }\int _{t}^{\infty }(Pf)(s)s^{\frac{1}{
\alpha }-1}\, \mathrm{
d}s,
\end{eqnarray*}
where $c_{\alpha }=\frac{\alpha -1}{\alpha }$.

Our main result of this section reads as follows.
%
\begin{theorem}
\label{T:riesz-potential}
Let $\gamma \in (0, n)$ and let $X$ be an~r.i.~space
over $\mathbb{R}^{n}$ such that
%
\begin{eqnarray}
\label{E:xi-satisfied}
\xi _{\frac{n}{\gamma }} \in X'(0,\infty ),
\end{eqnarray}
where, for $\alpha >0$,
%
\begin{eqnarray}
\label{E:definition-of-xi}
\xi _{\alpha }(t)=(t+1)^{\frac{1}{\alpha }-1}, \quad t\in (0,\infty ).
\end{eqnarray}
Define the functional $\sigma $ by
\begin{eqnarray*}
\sigma (f)=\left \|  S_{\frac{n}{\gamma }}f^{*}\right \|  _{X'(0,
\infty )}, \quad f\in \mathcal{M}_{+}(\mathbb{R}^{n}).
\end{eqnarray*}
Then $\sigma $ is an~r.i.~norm and
%
\begin{eqnarray}
\label{E:boundedness-riesz}
I_{\gamma }\colon X\to Y,
\end{eqnarray}
where $Y=Y(\sigma ')$. Moreover, $Y$ is the optimal
\textup{(}smallest\textup{)} r.i.~space for
which~\reftext{\eqref{E:boundedness-riesz}} holds.

Conversely, if~\reftext{\eqref{E:xi-satisfied}} is not true, then there does not
exist an~r.i.~space $Y$ for
which~\reftext{\eqref{E:boundedness-riesz}} holds.
\end{theorem}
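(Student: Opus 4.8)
The plan is to mimic the proof of Theorem~\ref{T:stieltjes-transform} almost verbatim, with $S$ replaced by the weighted Stieltjes transform $S_{n/\gamma}$ and with O'Neil's inequality and its quoted reversal playing the role of the comparison between $H$ and $S$. Write $\alpha=n/\gamma$ throughout. Two facts will be used repeatedly: that $(S_\alpha f^{*})(t)=c_\alpha\int_t^{\infty}f^{**}(s)s^{\gamma/n-1}\,\mathrm{d}s$, so that $t\mapsto(S_\alpha f^{*})(t)$ is nonincreasing for every $f$ and $\sigma$ inherits subadditivity directly from the subadditivity \eqref{E:subadditivity-of-doublestar-a} of $f\mapsto f^{**}$; and that $S_\alpha$ is self-adjoint with respect to the $L^{1}$-pairing, which a routine interchange of integration confirms.

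First I would verify that $\sigma$ is an r.i.~norm. Axioms (P2), (P3), (P6) are immediate, and the triangle inequality follows at once from the remark just made together with the triangle inequality in $X'(0,\infty)$ --- this step is even easier than its counterpart in Theorem~\ref{T:maximal-operator}. For (P4), given $E$ with $a=|E|<\infty$ one computes directly that $(S_\alpha\chi_{(0,a)})(t)\approx a^{\gamma/n}$ for $t\in(0,a)$ and $(S_\alpha\chi_{(0,a)})(t)=a\,t^{\gamma/n-1}$ for $t\ge a$; applying the dilation operator $D_a$ converts $S_\alpha\chi_{(0,a)}$ into a constant multiple of $a^{\gamma/n}\,\xi_{n/\gamma}$, so boundedness of $D_a$ on $X'(0,\infty)$ gives $\sigma(\chi_E)\approx a^{\gamma/n}\|\xi_{n/\gamma}\|_{X'(0,\infty)}<\infty$, which is exactly where hypothesis~\eqref{E:xi-satisfied} enters. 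For (P5) I would use the trivial bound $(S_\alpha f^{*})(t)\ge t^{\gamma/n-1}\int_0^{t}f^{*}=t^{\gamma/n}f^{**}(t)$ and restrict the $X'(0,\infty)$-norm to the interval $(|E|,2|E|)$; monotonicity of $f^{**}$ and the Hardy--Littlewood inequality then produce $\sigma(f)\gtrsim \varphi_{X'}(|E|)\,|E|^{\gamma/n-1}\int_E f\,\mathrm{d}\mu$.

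Next, with $\sigma$ an r.i.~norm so that $\|f\|_{Y'}=\sigma(f)$, I would prove $I_\gamma\colon X\to Y$. By O'Neil's inequality $(I_\gamma f)^{*}\le Cc_\alpha^{-1}S_\alpha f^{*}$ pointwise, so it suffices to bound $\|S_\alpha f^{*}\|_{Y(0,\infty)}$ by $\|f\|_{X}$. Since $S_\alpha f^{*}$ is nonincreasing, \eqref{E:corollary-of-HL} writes $\|S_\alpha f^{*}\|_{Y(0,\infty)}$ as a supremum over nonincreasing $g$ with $\|g\|_{Y'(0,\infty)}\le1$ of $\int_0^{\infty}(S_\alpha f^{*})(t)g(t)\,\mathrm{d}t$, which by self-adjointness equals $\int_0^{\infty}f^{*}(t)(S_\alpha g)(t)\,\mathrm{d}t\le\|f\|_{X}\|S_\alpha g\|_{X'(0,\infty)}=\|f\|_{X}\|g\|_{Y'(0,\infty)}\le\|f\|_{X}$, using $S_\alpha g=S_\alpha g^{*}$ for nonincreasing $g$. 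For the optimality of $Y$, suppose $I_\gamma\colon X\to Z$. The reversed O'Neil inequality gives, for each nonincreasing $g$ on $(0,\infty)$, a function $\widetilde g$ with $\widetilde g^{*}=g$ and $(I_\gamma\widetilde g)^{*}(t)\ge c\int_t^{\infty}g^{**}(s)s^{\gamma/n-1}\,\mathrm{d}s=cc_\alpha^{-1}(S_\alpha g)(t)$, whence $\|S_\alpha g\|_{Z(0,\infty)}\le C'\|g\|_{X(0,\infty)}$ for all nonincreasing $g$. Running the same duality computation --- now expanding $\|S_\alpha f^{*}\|_{X'(0,\infty)}$ via \eqref{E:corollary-of-HL} as a supremum over nonincreasing $h$ with $\|h\|_{X(0,\infty)}\le1$ of $\int_0^{\infty}f^{*}(t)(S_\alpha h)(t)\,\mathrm{d}t$ and using $\|S_\alpha h\|_{Z(0,\infty)}\le C'$ --- yields $\|f\|_{Y'}=\sigma(f)\le C'\|f\|_{Z'}$, i.e.\ $Z'\hookrightarrow Y'$, equivalently $Y\hookrightarrow Z$ by \eqref{E:equivalence-of-identities}. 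Finally, if \eqref{E:xi-satisfied} fails but $I_\gamma\colon X\to Y$ were to hold, the same chain gives $\|S_\alpha f^{*}\|_{X'(0,\infty)}\le C\|f\|_{Y'}$; taking $f$ with $f^{*}=\chi_{(0,1)}$ makes the right side finite by (P4) for $Y'$ while $(S_\alpha\chi_{(0,1)})(t)\approx\xi_{n/\gamma}(t)$ forces the left side to be infinite, a contradiction.

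The step I expect to cause the most friction is the passage between estimates for $S_\alpha$ on nonincreasing functions and the genuine statements about $\sigma$ and $Y'$ (the familiar ``fall of the star''). The plan above sidesteps the Hardy-type equivalence \eqref{T:Lenka-unrestricted}$\Leftrightarrow$\eqref{T:Lenka-nonincreasing} altogether by exploiting that $S_\alpha$ is self-adjoint and that, by \eqref{E:corollary-of-HL}, every supremum defining the relevant associate norms may be taken over nonincreasing functions, to which $S_\alpha$ is then applied without loss. Should one prefer the cleaner formulation ``$S_\alpha\colon Y'\to X'$'', the alternative route is to prove the pointwise bound $|S_\alpha g|(t)\le S_\alpha|g|(t)\le C_\alpha(S_\alpha g^{*})(t)$, which follows from the Hardy--Littlewood inequality once one observes that the decreasing rearrangement of $s\mapsto\chi_{(t,\infty)}(s)s^{\gamma/n-1}$ is $u\mapsto(u+t)^{\gamma/n-1}$; this is routine but is the only genuinely computational point, alongside the elementary verification of (P5).
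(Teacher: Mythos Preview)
Your proposal is correct and follows essentially the same route the paper takes: reduce $I_\gamma$ to the weighted Stieltjes transform $S_{n/\gamma}$ via O'Neil's inequality and its reversal, then repeat the proof of Theorem~\ref{T:stieltjes-transform} with $S_{n/\gamma}$ in place of $S$, exploiting that $S_{n/\gamma}f^{*}=c_{\alpha}\int_t^{\infty}f^{**}(s)s^{\gamma/n-1}\,\mathrm{d}s$ for the triangle inequality and that $S_{n/\gamma}$ is self-adjoint for the boundedness. The paper merely says the argument is ``analogous'' and ``follows in the usual way''; you have written out what that actually means, and in particular your use of \eqref{E:corollary-of-HL} to restrict the duality pairings to nonincreasing test functions makes precise the paper's one-line claim that $S\colon Y'\to X'$ ``follows trivially from the definition of $Y'$'', which as stated only gives the bound on rearrangements---your formulation is the honest version of that step. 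Your alternative pointwise bound $S_{\alpha}|g|\le 2\,S_{\alpha}g^{*}$, obtained by rearranging $\chi_{(t,\infty)}(s)s^{\gamma/n-1}$ to $(u+t)^{\gamma/n-1}$, is a clean extra observation that would yield the unrestricted operator bound $S_{\alpha}\colon Y'\to X'$ directly, but as you note it is not needed for the theorem itself.
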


As in the preceding sections, we also characterize optimal domains. We
also omit the proof since it is analogous, again, to that of
\reftext{Theorem~\ref{T:maximal-operator-domain}}.

\begin{theorem}
Let $\gamma \in (0, n)$ and let $Y$ be an~r.i.~space
over $\mathbb{R}^{n}$ such that
%
\begin{eqnarray}
\label{E:xi-condition-domain}
\xi _{\frac{\gamma }{n}} \in Y(0,\infty ),
\end{eqnarray}
where $\xi _{\alpha }$ is the function from~\reftext{\eqref{E:definition-of-xi}}.
Define the functional $\sigma $ by
\begin{eqnarray*}
\sigma (f)=\left \|  S_{\frac{n}{\gamma }}f^{*}\right \|  _{Y(0,
\infty )}, \quad f\in \mathcal{M}_{+}(\mathbb{R}^{n}).
\end{eqnarray*}
Then $\sigma $ is an~r.i.~norm and
%
\begin{eqnarray}
\label{E:riesz-bounded-domain}
I_{\gamma }\colon X\to Y,
\end{eqnarray}
where $X=X(\sigma )$. Moreover, $X$ is the optimal
\textup{(}biggest\textup{)} r.i.~space for
which~\reftext{\eqref{E:riesz-bounded-domain}} holds.

Conversely, if~\reftext{\eqref{E:xi-condition-domain}} is not true, then there
does not exist an~r.i.~space $X$ for
which~\reftext{\eqref{E:riesz-bounded-domain}} holds.
\end{theorem}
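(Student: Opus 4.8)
The plan is to run the argument entirely parallel to the proof of Theorem~\ref{T:maximal-operator-domain}, with the map $f\mapsto f^{**}$ replaced by $f\mapsto S_{\frac{n}{\gamma}}f^{*}$ and the two-sided estimate \eqref{E:herz} replaced by the O'Neil inequality and its reverse counterpart recalled at the beginning of this section. The single identity that makes this go through is
\[
(S_{\frac{n}{\gamma}}f^{*})(t)=c_{\frac{n}{\gamma}}\int_{t}^{\infty}f^{**}(s)\,s^{\frac{\gamma}{n}-1}\,\mathrm{d}s,\qquad t\in(0,\infty),\quad c_{\frac{n}{\gamma}}=\frac{n-\gamma}{n},
\]
which follows from the representation $(S_{\alpha}f)(t)=c_{\alpha}\int_{t}^{\infty}(Pf)(s)s^{\frac{1}{\alpha}-1}\,\mathrm{d}s$ recorded above (take $\alpha=\frac{n}{\gamma}$ and note $Pf^{*}=f^{**}$). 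Thus $S_{\frac{n}{\gamma}}f^{*}$ is the image of the nonincreasing function $f^{**}$ under the order-preserving linear operator $g\mapsto c_{\frac{n}{\gamma}}\int_{t}^{\infty}g(s)s^{\frac{\gamma}{n}-1}\,\mathrm{d}s$, and the right-hand side is exactly what appears in both O'Neil bounds.

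First we verify that $\sigma$ is an~r.i.~norm, checking (P1)--(P6) just as in Theorem~\ref{T:maximal-operator-domain}. Homogeneity, (P2), (P3) and (P6) are immediate from the corresponding properties of $Y(0,\infty)$, the monotone convergence theorem, and the order- and dilation-compatibility of $f\mapsto f^{*}$. For the triangle inequality one combines $(f+g)^{**}\le f^{**}+g^{**}$ (see \eqref{E:subadditivity-of-doublestar-a}) with the monotonicity of the above operator and then the lattice and triangle axioms of $Y(0,\infty)$. For (P4), since $\chi_{E}^{*}=\chi_{(0,|E|)}$ and, by the boundedness of the dilation operator on $Y(0,\infty)$, one may reduce to $|E|=1$; a direct computation of $\int_{t}^{\infty}\chi_{(0,1)}^{**}(s)s^{\frac{\gamma}{n}-1}\,\mathrm{d}s$ shows that it is equivalent to the function displayed in \eqref{E:definition-of-xi}, so that $\sigma(\chi_{E})<\infty$ for every $E$ of finite measure if and only if \eqref{E:xi-condition-domain} holds, which is the hypothesis. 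For (P5) we use the crude lower bound $(S_{\frac{n}{\gamma}}f^{*})(t)\ge t^{\frac{\gamma}{n}-1}\int_{0}^{t}f^{*}(s)\,\mathrm{d}s=t^{\frac{\gamma}{n}}f^{**}(t)$, which for a set $E$ of finite positive measure gives $\sigma(f)\ge f^{**}(|E|)\,\bigl\|t^{\frac{\gamma}{n}}\chi_{(0,|E|)}(t)\bigr\|_{Y(0,\infty)}$; the last norm is positive and finite (the latter by (P4) for $Y$), and $|E|\,f^{**}(|E|)\ge\int_{E}f\,\mathrm{d}x$ by the Hardy--Littlewood inequality \eqref{E:HL}, so (P5) follows.

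With $\sigma$ an~r.i.~norm we set $X=X(\sigma)$. The boundedness $I_{\gamma}\colon X\to Y$ in \eqref{E:riesz-bounded-domain} is then immediate: by the O'Neil inequality and the identity above, $\|I_{\gamma}f\|_{Y}=\|(I_{\gamma}f)^{*}\|_{Y(0,\infty)}\le C\bigl\|\int_{t}^{\infty}f^{**}(s)s^{\frac{\gamma}{n}-1}\,\mathrm{d}s\bigr\|_{Y(0,\infty)}=C'\sigma(f)=C'\|f\|_{X}$. For optimality, suppose $I_{\gamma}\colon Z\to Y$ for some r.i.~space $Z$; given $f\in\mathcal{M}_{+}(\mathbb{R}^{n})$, let $h\in\mathcal{M}(\mathbb{R}^{n})$ be equimeasurable with $f$ and furnished by the reverse O'Neil inequality, so $(I_{\gamma}h)^{*}(t)\ge c\int_{t}^{\infty}f^{**}(s)s^{\frac{\gamma}{n}-1}\,\mathrm{d}s$. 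Since $\|h\|_{Z}=\|f\|_{Z}$, it follows that $\sigma(f)=c_{\frac{n}{\gamma}}\bigl\|\int_{t}^{\infty}f^{**}(s)s^{\frac{\gamma}{n}-1}\,\mathrm{d}s\bigr\|_{Y(0,\infty)}\le\frac{c_{\frac{n}{\gamma}}}{c}\|(I_{\gamma}h)^{*}\|_{Y(0,\infty)}=\frac{c_{\frac{n}{\gamma}}}{c}\|I_{\gamma}h\|_{Y}\le C\|f\|_{Z}$, i.e.\ $Z\hookrightarrow X$; hence $X$ is the largest r.i.~space for which \eqref{E:riesz-bounded-domain} holds. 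For the converse, assume \eqref{E:xi-condition-domain} fails and, for contradiction, let $X$ be a domain partner of $Y$ with respect to $I_{\gamma}$; applying the preceding estimate with $Z=X$ and $f=\chi_{E}$ for a fixed $E$ with $|E|=1$ gives $\sigma(\chi_{E})\le C\|\chi_{E}\|_{X}<\infty$ by (P4) for $X$, whereas the computation used in the verification of (P4) gives $\sigma(\chi_{E})=\infty$ once \eqref{E:xi-condition-domain} fails --- a contradiction.

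The only genuinely substantive inputs are the two O'Neil estimates quoted at the start of the section, used precisely as \eqref{E:herz} was used in Theorem~\ref{T:maximal-operator-domain}; everything else is routine Banach-function-space bookkeeping. I expect the one step deserving a little care to be the elementary but slightly fiddly evaluation of $\int_{t}^{\infty}\chi_{(0,1)}^{**}(s)s^{\frac{\gamma}{n}-1}\,\mathrm{d}s$ that pins the finiteness of $\sigma(\chi_{E})$ to \eqref{E:xi-condition-domain}. In contrast to the range theorems of the earlier sections, no ``fall of a star'' --- no passage between \eqref{T:Lenka-unrestricted} and \eqref{T:Lenka-nonincreasing} --- is needed here, since the monotonicity of $f^{**}$ and of $S_{\frac{n}{\gamma}}f^{*}$ may be exploited directly.
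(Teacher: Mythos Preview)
Your proposal is correct and matches the paper's intended argument exactly: the paper itself omits the proof, stating only that it is analogous to that of Theorem~\ref{T:maximal-operator-domain}, which is precisely the template you follow, with the O'Neil inequality and its reverse playing the role of~\eqref{E:herz} and the identity $S_{\frac{n}{\gamma}}f^{*}=c_{\frac{n}{\gamma}}\int_{t}^{\infty}f^{**}(s)s^{\frac{\gamma}{n}-1}\,\mathrm{d}s$ supplying the link. Your verification of (P1)--(P6), the boundedness, the optimality, and the nonexistence part are all sound; in particular, the dilation reduction in (P4) is justified because $S_{\frac{n}{\gamma}}\chi_{(0,a)}^{*}(t)=a^{\frac{\gamma}{n}}\,(S_{\frac{n}{\gamma}}\chi_{(0,1)}^{*})(t/a)$, as a change of variables shows.
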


We use \reftext{Theorem~\ref{T:riesz-potential}} to provide several examples of
the optimal range partners for Lorentz-Zygmund spaces with respect to
the Riesz potential.

\begin{theorem}
\label{T:riesz-potential-GLZ}
Assume that $\gamma \in (0,n)$, $p,q\in [1,\infty ]$, ${\mathbb{A}}
\in \mathbb{R}^{2}$. Then
\begin{eqnarray}
\label{E:riesz_p1}
I_{\gamma }\colon {L^{p,q;\mathbb{A}}}\to
\left\{
\begin{array}{l@{\quad }l}
Y_{1}
& p=1, q=1, \alpha _{0} \geq 0, \alpha _{\infty }\leq 0, \\
L^{\frac{np}{n-\gamma p},q;{\mathbb{A}}}
& 1<p<\frac{n}{\gamma }, \\
L^{\infty ,q;{\mathbb{A}}- 1}
& p={\frac{n}{\gamma }}, 1\leq q
\leq \infty , \alpha _{0} < \frac{1}{q'},
\alpha _{\infty }>
\frac{1}{q'},
\\
L^{\infty ,q;[-\frac{1}{q},\alpha _{\infty }- 1],\left [-1, 0\right ]}
& p={\frac{n}{\gamma }}, 1< q\leq \infty , \alpha _{0} =
\frac{1}{q'},
\alpha _{\infty }>\frac{1}{q'},
\\
Y_{2}
& p={\frac{n}{\gamma }}, q=1, \alpha _{0}<0, \alpha _{
\infty }=0,
\\
L^{\infty ,1;\left [-1,\alpha _{\infty }- 1\right ],\left [-1,0\right ],
\left [-1,0\right ]}
& p={\frac{n}{\gamma }}, q=1, \alpha _{0} = 0,
\alpha _{\infty }> 0,
\\
L^{\infty }
& p={\frac{n}{\gamma }}, q=1, \alpha _{0}\geq 0,
\alpha _{\infty }= 0,
\\
Y_{3}
& p={\frac{n}{\gamma }}, q=1, \alpha _{0} > 0,
\alpha _{\infty }> 0,
\\
Y_{2}
& p={\frac{n}{\gamma }}, 1< q\leq \infty , \alpha _{0} >
\frac{1}{q'},
\alpha _{\infty }>\frac{1}{q'}
,
\end{array}\right.
\end{eqnarray}
where
\begin{eqnarray*}
\|f\|_{Y_{2}}
& = & \|f\|_{L^{\infty }}
+ \| t^{-\frac{1}{q}}
\ell ^{\alpha _{\infty }- 1}(t)f^{*}(t)\|_{L^{q}(1,\infty )},
\\
\|f\|_{Y_{3}}
& = & \| t^{-1}\ell ^{\alpha _{0} - 1}(t)f^{*}(t)\|_{{L
^{1}(0,1)}},
\end{eqnarray*}
and $Y_{1}$ is defined by its associate space $Y_{1}'$ whose norm is
given by
\begin{eqnarray*}
\|f\|_{Y_{1}'}
= \sup _{0<t<\infty } \ell ^{-{\mathbb{A}}}(t)
\int _{t}
^{\infty }f^{**}(s)s^{{\frac{\gamma }{n}}-1}\,\mathrm{
d}s, \quad f\in \mathcal{M}_{+}(\mathbb{R}^{n}).
\end{eqnarray*}
In particular, if ${\mathbb{A}}=[0,0]$, we have $Y_{1}=L^{\frac{n}{n-
\gamma },\infty }$.

Moreover, these spaces are the optimal range partners with respect to~$I
_{\gamma }$.
\end{theorem}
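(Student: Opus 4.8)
The plan is to route everything through Theorem~\ref{T:riesz-potential}, which already delivers the optimal range space $Y$ of $X=L^{p,q;{\mathbb{A}}}$ through the associate norm $\|f\|_{Y'}=\|S_{n/\gamma}f^{*}\|_{X'(0,\infty)}$, and then to evaluate this functional explicitly in each of the listed parameter regimes. The preliminary moves are standard: by~\citep[Theorem~7.1]{OP} every $L^{p,q;{\mathbb{A}}}$ on the list is equivalent to an~r.i.~space, and by~\citep[Theorems~6.2 and~6.6]{OP} its associate space is the Lorentz--Zygmund space $X'=L^{p',q';-{\mathbb{A}}}$, whose first index lies in $(1,\infty)$ except when $p=1$. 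One then checks the hypothesis $\xi_{n/\gamma}\in X'(0,\infty)$ of Theorem~\ref{T:riesz-potential}; since $\xi_{n/\gamma}(t)=(1+t)^{\gamma/n-1}$ behaves like a constant near $0$ and like $t^{\gamma/n-1}$ near infinity, this reduces to an elementary integrability test which, when $p'<\infty$, is automatic at $0$ and is captured at infinity precisely by the stated conditions on $\alpha_{\infty}$ (and, for $q=1$, on $\alpha_{0}$).

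The computation rests on the identity $(S_{n/\gamma}f^{*})(t)=t^{\gamma/n}f^{**}(t)+\int_{t}^{\infty}f^{*}(s)s^{\gamma/n-1}\,\mathrm{d}s$, which together with the lattice axiom gives $\|f\|_{Y'}\approx\|t^{\gamma/n}f^{**}(t)\|_{X'}+\bigl\|\int_{t}^{\infty}f^{*}(s)s^{\gamma/n-1}\,\mathrm{d}s\bigr\|_{X'}$, the first (a $P$-type) piece and the second (a $Q$-type) piece. When $p=q=1$ one has $X'=L^{\infty,\infty;-{\mathbb{A}}}$, and since $\int_{t}^{\infty}f^{**}(s)s^{\gamma/n-1}\,\mathrm{d}s$ is a constant multiple of $(S_{n/\gamma}f^{*})(t)$, the functional $\|\cdot\|_{Y_{1}'}$ in the statement is, up to that constant, just $\|S_{n/\gamma}f^{*}\|_{X'}$, so nothing more is needed; when moreover ${\mathbb{A}}=[0,0]$ the bound $f^{*}(s)\le s^{-1}\|f\|_{L^{1}}$ shows the two pieces collapse to $\|f\|_{L^{1}}$, whence $Y_{1}=L^{n/(n-\gamma),\infty}$. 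For $1<p<\frac{n}{\gamma}$, set $1/r'=1/p'+\gamma/n$; the lower bound $\|f\|_{Y'}\gtrsim\|f\|_{L^{r',q';-{\mathbb{A}}}}$ is immediate from $f^{**}\ge f^{*}$ inside the $P$-piece, the $Q$-piece is exactly the functional simplified in the proof of Theorem~\ref{T:fractional-maximal-operator-GLZ}(b), and the $P$-piece $\|t^{1/r'-1/q'-1}\ell^{-{\mathbb{A}}}(t)\int_{0}^{t}f^{*}\|_{L^{q'}}$ is handled by the classical (broken-logarithmic) weighted Hardy inequality, whose relevant condition $1/r'<1$ is precisely $p<\frac{n}{\gamma}$; hence $Y'=L^{r',q';-{\mathbb{A}}}$ and, since $1<r'<\infty$, $Y=L^{np/(n-\gamma p),q;{\mathbb{A}}}$ by~\citep[Theorems~6.2 and~6.6]{OP}.

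The remaining cases, all with $p=\frac{n}{\gamma}$, carry the real work. Now $1/r'=1$, so the Hardy inequality for the $P$-piece is in its critical endpoint form, yielding a logarithmic rather than a power gain, which is the source of the shifts ${\mathbb{A}}\to{\mathbb{A}}-1$ and, at the secondary thresholds $\alpha_{0}=1/q'$ or $\alpha_{0}=0$, of the further $\ell\ell$- and $\ell\ell\ell$-level corrections appearing in the statement. In each subcase I would evaluate the $P$-piece and the $Q$-piece separately on $(0,1)$ and on $(1,\infty)$ — the $Q$-piece still being governed by the ordinary (noncritical) weighted Hardy inequality there, since the exponent $1-\gamma/n$ is strictly positive — decide on each interval which contribution dominates (this is exactly where the trichotomy in the sign of $\alpha_{0}-1/q'$ and the position of $\alpha_{\infty}$ relative to $1/q'$ enters), and assemble the result. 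In several subcases the answer is no longer a single Lorentz--Zygmund space but one behaving like $L^{\infty}$ near $0$ and like a Lorentz--Zygmund space near infinity ($Y_{2}$), or a one-sided Lorentz--Zygmund space concentrated near $0$ ($Y_{3}$); for these one must additionally verify that the computed functional is equivalent to an~r.i.~norm and then identify $Y$ either by computing the associate of the explicit $Y_{2}$ or $Y_{3}$ or, in the subcases where $Y=L^{\infty}$, by the characterisation of $L^{1}$ through its fundamental function $\varphi(t)=t$, exactly as in the proof of Theorem~\ref{T:fractional-maximal-operator-GLZ}.

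The main obstacle is therefore the bookkeeping in the critical regime $p=\frac{n}{\gamma}$: tracking the exact number of logarithmic levels produced by the critical Hardy inequality on $(0,1)$ and on $(1,\infty)$ as $\alpha_{0}$ and $\alpha_{\infty}$ cross the thresholds $1/q'$ and $0$, deciding in each region whether the $P$- or the $Q$-contribution wins, and recognising the resulting non-Lorentz--Zygmund norms as $Y_{2}$, $Y_{3}$ after passing to associates. Everything upstream of that is routine once the reduction to Theorem~\ref{T:riesz-potential} and the associate-space dictionary of~\citep{OP} are in place.
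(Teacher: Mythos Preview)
Your proposal is correct and shares the same skeleton as the paper's proof: both route through Theorem~\ref{T:riesz-potential}, use \citep[Theorem~7.1]{OP} for the r.i.\ structure and \citep[Theorems~6.2, 6.6]{OP} for the associate $X'=L^{p',q';-{\mathbb{A}}}$, and then evaluate $\|S_{n/\gamma}f^{*}\|_{X'}$.

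The difference lies in how that evaluation is organised. You split $S_{n/\gamma}f^{*}$ additively into the $P$-piece $t^{\gamma/n}f^{**}(t)$ and the $Q$-piece $\int_{t}^{\infty}f^{*}(s)s^{\gamma/n-1}\,\mathrm{d}s$, then treat each piece with its own Hardy inequality and, in the critical regime $p=n/\gamma$, propose to work out the logarithmic bookkeeping by hand. The paper instead uses the single-integral representation $S_{n/\gamma}f^{*}(t)=c\int_{t}^{\infty}f^{**}(s)s^{\gamma/n-1}\,\mathrm{d}s$ and applies one weighted Hardy inequality (together with the trivial lower bound $S_{n/\gamma}f^{*}(t)\ge c\,t^{\gamma/n}f^{**}(t)$) to obtain directly $\|f\|_{Y'}\approx\|f\|_{L^{(r',q';-{\mathbb{A}})}}$ with $\tfrac{1}{r'}=\tfrac{1}{p'}+\tfrac{\gamma}{n}$. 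From there all the case distinctions --- including every one of the critical $p=n/\gamma$ subcases --- are read off from the existing catalogue in \citep{OP}: Theorem~3.8 for the equivalence $L^{(r',q';-{\mathbb{A}})}=L^{r',q';-{\mathbb{A}}}$ when $r'>1$, and Theorems~6.6 and~6.7 for the associate spaces (including the exotic $Y_{2}$, $Y_{3}$, and the iterated-logarithm targets). In effect, the paper recognises that your $Q$-piece is always dominated by your $P$-piece, so the latter alone carries the norm, and then outsources the critical-regime analysis to \citep{OP} rather than redoing it. Your route is more self-contained; the paper's is considerably shorter.
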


\begin{proof}
We note that ${L^{p,q;\mathbb{A}}}$ is equivalent to a
rearrangement--invariant Banach function space due to
\citep[Theorem~7.1]{OP} in all the cases.

Assume that $p\in (1,\infty )$ and $q\in [1,\infty ]$. By
\citep[Theorems~6.2 and~6.6]{OP}, the associate space of $L^{p,q;{\mathbb{A}}}$
is equivalent to $L^{p',q';-{\mathbb{A}}}$. We need to check
when $\xi _{\frac{n}{\gamma }}\in X'(0,\infty )$ is satisfied, that is,
when
\begin{equation*}
\int_0^\infty t^{\frac{q'}{p'} - 1}\ell^{-\mathbb{A} q'}(t)(t + 1)^{\frac{\gamma - n}{n}q'}\, \mathrm{d} t < \infty\quad\text{if $q\in(1,\infty]$,}
\end{equation*}
or when
\begin{equation*}
\sup\limits_{t\in(0,\infty)} t^{\frac1{p'}}\ell^{-\mathbb{A}}(t)(t + 1)^{\frac{\gamma - n}{n}} < \infty\quad\text{if $q = 1$.}
\end{equation*}
It is easy to see that in the former case the integral is finite if and only if either
\begin{equation*}
p\in(1,\tfrac{n}{\gamma})
\end{equation*}
or
\begin{equation*}
p=\frac{n}{\gamma}\text{ and }\alpha_\infty > \frac{1}{q'},
\end{equation*}
while in the latter case the supremum is finite if and only if either
\begin{equation*}
p\in(1,\tfrac{n}{\gamma})
\end{equation*}
or
\begin{equation*}
p=\frac{n}{\gamma}\text{ and }\alpha_\infty \geq 0.
\end{equation*}
Henceforth, we assume that these conditions are satisfied.  By the classical weighted
Hardy inequality, there is a positive constant $C$ such that
\begin{eqnarray*}
\Vert S_{\frac{n}{\gamma }} g^{*}\Vert _{p',q';-{\mathbb{A}}}
&=&
\Vert t^{\frac{1}{p'} - \frac{1}{q'}}\ell ^{-A}(t)\int _{t}^{\infty }g
^{**}(s)s^{\frac{\gamma -n}{n}}\, \mathrm{
d}s\Vert _{q'}
\\
&\le   & C\Vert t^{\frac{1}{p'}+\frac{1}{q}}\ell ^{-{\mathbb{A}}}(t)g
^{**}(t)t^{\frac{\gamma }{n} - 1}\Vert _{q'} = C\Vert t^{\frac{1}{p'}+\frac{
\gamma }{n} - \frac{1}{q'}}\ell ^{-{\mathbb{A}}}(t)g^{**}(t)\Vert _{q'}
\\
&= &C\Vert g\Vert _{(r',q';-{\mathbb{A}})},
\end{eqnarray*}
where $\frac{1}{p'} + \frac{\gamma }{n} = \frac{1}{r'}$, that is,
$r'=\frac{np}{(n+\gamma )p - n}$. The converse inequality
\begin{eqnarray*}
\Vert g\Vert _{(r',q';-{\mathbb{A}})}\le C' \Vert S_{\frac{n}{
\gamma }} g^{*}\Vert _{p',q';-{\mathbb{A}}}
\end{eqnarray*}
for some positive $C'$ follows immediately from the estimate
\begin{eqnarray*}
S_{\frac{n}{\gamma }} g^{*}(t)
&= \int _{t}^{\infty }g^{**}(s)s^{\frac{
\gamma }{n}-1}\, \mathrm{
d}s = \int _{t}^{\infty }\frac{1}{s^{2-\frac{\gamma }{n}}}\int _{0}^{s}
g^{*}(u)\, \mathrm{
d}u\, \mathrm{
d}s
\\
&\geq \int _{0}^{t} g^{*}(u)\, \mathrm{
d}u \int _{t}^{\infty }\frac{1}{s^{2-\frac{\gamma }{n}}}\, \mathrm{
d}s = \frac{n}{n-\gamma }t^{\frac{\gamma }{n}}g^{**}(t).
\end{eqnarray*}
If $p\in (1,\frac{n}{\gamma })$, then $r'\in (1,\frac{n}{\gamma })$.
By~\citep[Theorem~3.8]{OP}, $L^{(r',q';-{\mathbb{A}})}$ is equivalent
to $L^{r',q';-{\mathbb{A}}}$. Hence $Y$ is equivalent to $L^{r,q;
{\mathbb{A}}}$, where $r=\frac{np}{n-\gamma p}\in (\frac{n}{n-\gamma
},\infty )$, by \citep[Theorem~6.2]{OP}.
If $p = \frac{n}{\gamma }$, then $r' = 1$. If $q\in (1,\infty )$ (and
hence $q'\in (1,\infty )$), we obtain \reftext{\eqref{E:riesz_p1}} for
$q\in (1,\infty )$ by virtue of \citep[Theorem~6.7]{OP}. If
$q=\infty $ (and hence $q'=1$), we combine \citep[Theorem~3.8]{OP} with
\citep[Theorem~6.6]{OP} in order to prove \reftext{\eqref{E:riesz_p1}} for
$q=\infty $.
If $p = \frac{n}{\gamma}$, $q=1$, and, for instance, $\alpha_0=0$ and $\alpha_\infty>0$, then, by the computations above, $\lVert S_{\frac{n}{\gamma}} g^*\rVert_{p',q';-\mathbb{A}}\approx\lVert g\rVert_{(1,\infty;[0,-\alpha_\infty])}$. Hence \eqref{E:riesz_p1} for this particular case follows from the description of the associate space of $L^{(1,\infty;[0,-\alpha_\infty])}$ provided by \citep[Theorem~6.7]{OP}. The other cases when $p = \frac{n}{\gamma}$ and $q=1$ can be proved similarly.
In the remaining cases the proof is analogous to that of
\reftext{Theorem~\ref{T:fractional-maximal-operator-GLZ}}. We omit the details.
\end{proof}
We finally note that the result stated in
\reftext{Theorem~\ref{T:riesz-potential}} follows in the usual way from the
corresponding result for the weighted Stieltjes transform. Its proof is
analogous to that of \reftext{Theorem~\ref{T:stieltjes-transform}}.
%
\begin{theorem}
Let $\alpha \in (1,\infty )$. Let $X$ be a rearrangement-invariant
Banach function space over $(0,\infty )$ such that
%
\begin{eqnarray}
\label{E:xi-satisfied2}
\xi _{\alpha } \in X'(0,\infty ),
\end{eqnarray}
where $\xi _{\alpha }$ is defined by~\reftext{\eqref{E:definition-of-xi}}. Define
the functional $\sigma $ by
\begin{eqnarray*}
\sigma (f)=\left \|  S_{\alpha }f^{*}\right \|  _{X'(0,\infty )}, \quad f
\in \mathcal{M}_{+}(0,\infty ).
\end{eqnarray*}
Then $\sigma $ is an~r.i.~norm and
%
\begin{eqnarray}
\label{E:boundedness-weighted-stieltjes}
S_{\alpha }\colon X\to Y,
\end{eqnarray}
where $Y=Y(\sigma ')$. Moreover, $Y$ is the optimal
\textup{(}smallest\textup{)} r.i.~space for
which~\reftext{\eqref{E:boundedness-weighted-stieltjes}} holds.

Conversely, if~\reftext{\eqref{E:xi-satisfied2}} is not true, then there does not
exist an~r.i.~space $Y$ for
which~\reftext{\eqref{E:boundedness-weighted-stieltjes}} holds.
\end{theorem}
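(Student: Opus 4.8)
The plan is to follow the proof of Theorem~\ref{T:stieltjes-transform} essentially verbatim, the only new feature being the weight $s^{\frac1\alpha-1}$ in place of $s^{-1}$. Two structural observations drive everything. First, $S_\alpha$ is the integral operator with the manifestly \emph{symmetric} kernel $k(s,t)=(\max\{s,t\})^{\frac1\alpha-1}$, so by the Fubini theorem it is self-adjoint with respect to the $L^1$-pairing, and hence~\eqref{E:duality-of-operators} applies with $T=T'=S_\alpha$: the boundedness $S_\alpha\colon X\to Y$ is equivalent to $S_\alpha\colon Y'\to X'$. Second, writing $R_\alpha h(t)=\int_t^\infty h(s)s^{\frac1\alpha-1}\,\mathrm{d}s$, one has $(S_\alpha g)(t)=c_\alpha\int_t^\infty(Pg)(s)s^{\frac1\alpha-1}\,\mathrm{d}s=c_\alpha R_\alpha(g^{**})(t)$ for every $g\in\mathcal{M}_+(0,\infty)$ (with $c_\alpha=\tfrac{\alpha-1}{\alpha}$); since $R_\alpha$ is positive and monotone and $(Pg)(s)\le(Pg^{*})(s)=g^{**}(s)$ by the Hardy--Littlewood inequality~\eqref{E:HL}, this yields the pointwise domination $S_\alpha g\le S_\alpha g^{*}$, and also the formula $\sigma(f)=\|S_\alpha f^{*}\|_{X'(0,\infty)}=c_\alpha\|R_\alpha f^{**}\|_{X'(0,\infty)}$.

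First I would check that $\sigma$ is an r.i.\ norm, as in Theorems~\ref{T:maximal-operator} and~\ref{T:stieltjes-transform}. Axioms (P2), (P3), (P6) and all of (P1) except the triangle inequality are immediate; the triangle inequality follows from $\sigma(f)=c_\alpha\|R_\alpha f^{**}\|_{X'(0,\infty)}$, the subadditivity $(f+g)^{**}\le f^{**}+g^{**}$ from~\eqref{E:subadditivity-of-doublestar-a}, the monotonicity of $R_\alpha$, and the triangle inequality in $X'(0,\infty)$. Axiom (P5) follows from the lower estimate $\sigma(f)\ge\|\chi_{(|E|/2,|E|)}(t)\,t^{\frac1\alpha}f^{**}(t)\|_{X'(0,\infty)}\ge(|E|/2)^{\frac1\alpha}f^{**}(|E|)\|\chi_{(|E|/2,|E|)}\|_{X'(0,\infty)}$ together with $f^{**}(|E|)\ge\frac1{|E|}\int_E|f|$, exactly in the spirit of the corresponding step in the proof of Theorem~\ref{T:maximal-operator}. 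The only place where the hypothesis enters is (P4): a direct computation shows $(S_\alpha\chi_{(0,r)})(t)\approx r^{\frac1\alpha}\xi_\alpha(t/r)=r^{\frac1\alpha}(D_{1/r}\xi_\alpha)(t)$ for all $t\in(0,\infty)$, so by the boundedness of the dilation operator on $X'(0,\infty)$ we get $\|S_\alpha\chi_{(0,r)}\|_{X'(0,\infty)}\approx r^{\frac1\alpha}\|D_{1/r}\xi_\alpha\|_{X'(0,\infty)}<\infty$ precisely because $\xi_\alpha\in X'(0,\infty)$ by~\eqref{E:xi-satisfied2}.

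Once $\sigma$ is known to be an r.i.\ norm, $Y=Y(\sigma')$ is a well-defined r.i.\ space and $\|g\|_{Y'}=\sigma(g)$. To prove $S_\alpha\colon X\to Y$ it suffices, by self-adjointness, to prove $S_\alpha\colon Y'\to X'$; but $|S_\alpha g|\le S_\alpha|g|\le S_\alpha|g|^{*}=S_\alpha g^{*}$ gives $\|S_\alpha g\|_{X'(0,\infty)}\le\|S_\alpha g^{*}\|_{X'(0,\infty)}=\sigma(g)=\|g\|_{Y'}$, i.e.\ $S_\alpha\colon Y'\to X'$ with constant one. For optimality, assume $S_\alpha\colon X\to Z$ for an r.i.\ space $Z$; by self-adjointness $S_\alpha\colon Z'\to X'$, that is $\|S_\alpha g\|_{X'(0,\infty)}\le C\|g\|_{Z'(0,\infty)}$ for all $g$, and applying this to $g^{*}$ (using $\|g^{*}\|_{Z'(0,\infty)}=\|g\|_{Z'(0,\infty)}$) gives $\|g\|_{Y'}=\|S_\alpha g^{*}\|_{X'(0,\infty)}\le C\|g\|_{Z'(0,\infty)}$, so $Z'\hookrightarrow Y'$, which by~\eqref{E:equivalence-of-identities} is exactly $Y\hookrightarrow Z$. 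Finally, if $\xi_\alpha\notin X'(0,\infty)$ but $S_\alpha\colon X\to Y$ held for some r.i.\ space $Y$, then the bound $S_\alpha\colon Y'\to X'$ with $g=\chi_{(0,1)}$ would give $\|S_\alpha\chi_{(0,1)}\|_{X'(0,\infty)}\le C\|\chi_{(0,1)}\|_{Y'(0,\infty)}<\infty$ by (P4) for $Y'$, whereas $S_\alpha\chi_{(0,1)}\approx\xi_\alpha$ forces the left-hand side to be infinite; this contradiction completes the proof.

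I expect the only mildly delicate computation to be the two-sided asymptotic $(S_\alpha\chi_{(0,r)})(t)\approx r^{1/\alpha}\xi_\alpha(t/r)$ underpinning both (P4) and the necessity part; everything else is a routine transcription of the unweighted arguments, since $s^{1/\alpha-1}\approx(1+s)^{1/\alpha-1}=\xi_\alpha(s)$ for large $s$ and $s^{1/\alpha-1}$ is comparable to a constant on every subinterval bounded away from~$\infty$.
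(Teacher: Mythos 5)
Your proposal is correct and follows essentially the same route the paper intends (the paper omits this proof, declaring it analogous to that of Theorem~\ref{T:stieltjes-transform}): self-adjointness of $S_\alpha$ via its symmetric kernel, the identity $S_\alpha f^{*}=c_\alpha R_\alpha f^{**}$ playing the role of $Sf^{*}=Qf^{**}$ for the triangle inequality, and the associate-space duality argument for boundedness, optimality and the necessity of \eqref{E:xi-satisfied2}. The supporting computations you flag as the only delicate point — $(S_\alpha\chi_{(0,r)})(t)\approx r^{1/\alpha}(D_{1/r}\xi_\alpha)(t)$ for (P4) and the converse direction — do check out for $\alpha\in(1,\infty)$.
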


\paragraph{Acknowledgment}
We wish to thank the referee for valuable
comments. We are greatly indebted to Lenka Slav\'{\i }kov\'{a} for
stimulating discussions about the subject.

\paragraph{Funding}
This research was supported by the grants P201-13-14743S and
P201-18-00580S of the Czech Science Foundation, by the grant 8X17028 of the
Czech Ministry of Education and by the grant SVV-2017-260455.

\bibliography{classical-operators-arxiv}

\end{document}